\documentclass[a4paper]{amsart}
\usepackage[utf8]{inputenc}
\usepackage[T1]{fontenc}
\usepackage{amsfonts}
\usepackage{amsmath}
\usepackage{amssymb}
\usepackage{amstext}
\usepackage{amsthm}
\usepackage[shortlabels]{enumitem}
\usepackage{geometry}
\usepackage{fullpage}
\usepackage{hyperref}
\usepackage{cleveref}
\usepackage{mathtools}

\usepackage{tikz}                           
\usepackage{ifthen}							
\usetikzlibrary{arrows}						
\usepackage{tikz-cd}                        
\usetikzlibrary{decorations.pathmorphing}   
\usetikzlibrary{decorations.markings}		
\usetikzlibrary{decorations.pathreplacing}	
\usetikzlibrary{calc}		
\usetikzlibrary{3d}							
\usetikzlibrary{matrix,arrows}

\usepackage[rgb]{xcolor}

\usepackage{ifthen}
\usepackage{footmisc}

\usepackage{shuffle}
\usepackage{pifont} 
\usepackage{caption} 

\usepackage{some_packages}
\usepackage{some_macros}

\newtheorem{theorem}{Theorem}[section]
\newtheorem{lemma}[theorem]{Lemma}

\newtheorem{corollary}[theorem]{Corollary}
\newtheorem{proposition}[theorem]{Proposition}

\theoremstyle{definition}
\newtheorem{definition}[theorem]{Definition}
\newtheorem{example}[theorem]{Example}

\theoremstyle{remark}
\newtheorem{remark}[theorem]{Remark}
\numberwithin{equation}{section}

\title{A Loehr-Remmel bijection in the $n\times kn$ grid and sandpiles}

\author{Michele D'Adderio}
\address{Universit\`a di Pisa\\Dipartimento di Matematica\\ Largo Bruno Pontecorvo 5, 56127 Pisa\\ Italy}\email{michele.dadderio@unipi.it}

\author{Alessio Sgubin}
\address{LIGM, Universit\'e Gustave-Eiffel\\ CNRS, ENPC, ESIEE-Paris\\ 5 Boulevard Descartes,
	Champs-sur-Marne, 77454 Marne-la-Vall\'ee cedex 2\\ France}\email{alessio.sgubin@univ-eiffel.fr}

\begin{document}

\begin{abstract}
	We extend the $\pmaj$ statistic of Loehr and Remmel to labelled Dyck paths in the $n\times kn$ grid, and generalize their bijection sending the bistatistic $(\dinv,\area)$ to $(\area,\pmaj)$, proving in this way a new combinatorial formula for $\nabla^ke_n$ ($k \geq 1$). At $k=1$ we recover the original statistic and the original bijection. Moreover, we provide an explicit description of the recurrent configurations of the sandpile model on a family of graphs $G_{\mu, \nu}^{(k)}$, indexed by an integer $k\geq 1$ and two compositions $\mu$ and $\nu$: at $k=1$ these are the clique-independent graphs of D'Adderio et al. Finally, we define a delay statistic on these configurations, and we show that, together with the usual level statistic, it can be used to provide a new combinatorial interpretation of the polynomials $\langle \nabla^k e_n,e_\mu h_\nu\rangle$ from the $(n,kn)$-shuffle theorem. At $k=1$ we recover the main results of D'Adderio et al.
\end{abstract}

\maketitle

\section{Introduction}

In a recent breakthrough~\cite{CarlssonMellitShuffle} Carlsson and Mellit gave a positive solution to the long-standing \emph{shuffle conjecture}~\cite{HHLRU-2005}, which states a combinatorial formula for the Frobenius characteristic of the so-called diagonal harmonics. More precisely, this theorem provides the monomial expansion of the symmetric function $\nabla e_n$, where $e_n$ is the elementary symmetric function of degree $n$ in the variables $x_1,x_2,\dots$, and $\nabla$ is the famous \emph{nabla} operator introduced by Bergeron and Garsia in the 90's (cf.~\cite{Bergeron-Garsia-Haiman-Tesler-Positivity-1999}). In this formula, to each \emph{labelled Dyck path} in the $n\times n$ grid corresponds a monomial, where the variables $x_1,x_2,\dots$ keep track of the labels, while the variables $q$ and $t$ keep track of the bistatistic ($\dinv$, $\area$).

The bistatistic ($\dinv$, $\area$) has been successfully extended to other combinatorial objects related to Dyck paths, providing an impressive proliferation of theorems and conjectures (see e.g.~\cite{Carlsson-Mellit-Compositio,BHMPS_any_line,BHMPS_Loehr-Warrington,IraciRomero_DeltaTheta,BergeronHaglungIraciRomero_supernabla,IraciPagariaPaolini_falling_stars,KimLeeOh_science_fiction,KimOh_ext_SciFi,DAdderioIraciVandenWyngaerd_Theta,DAdderio_Mellit_Delta,DILRV_tiered_trees}). 

In particular, in~\cite{Mellit_Rational} Mellit proved a \emph{rational} extension of the \emph{shuffle theorem}, which encompasses a formula for $\nabla^k e_n$ ($k\geq 1$) in terms of labelled Dyck paths in the $n\times kn$ grid.

\smallskip

The statistic $\dinv$, discovered by Haiman, was originally defined only on ``unlabelled'' Dyck paths, and together with $\area$ provided a combinatorial interpretation of the famous $(q,t)$-Catalan $\langle \nabla e_n,e_n\rangle$ (here $\langle - ,-\rangle$ denotes the Hall scalar product on symmetric functions). At the same time Hanglund defined a statistic $\bounce$ on Dyck paths, that together with $\area$ provided another interpretation of the $(q,t)$-Catalan. These two interpretations can be proved to be the same by an explicit bijection, usually called $\zeta$, due to Haglund and Loehr~\cite{Haglund_Conjectured,Haglund_Loehr_Conjectured}, sending a Dyck path in the $n\times n$ grid into another one whose bistatistic $(\area,\bounce)$ coincides with the bistatistic $(\dinv,\area)$ of the original one. Both the statistic $\bounce$ and the bijection have been extended first by Loehr~\cite{Loehr_HigherCatalan} to Dyck paths in the $n\times kn$ grid, and then to more general Dyck paths in a rectangular grid, the maps going under the collective name of \emph{sweep maps}, see e.g.~\cite{ArmstrongLoehrWarrington_Sweep,ThomasWilliams_Sweeping}.

\smallskip

In~\cite{LoehrRemmel_pmaj} Loehr and Remmel introduced a new statistic $\pmaj$ for labelled Dyck paths in the $n\times n$ grid, as an extension of the $\bounce$ to the labelled objects. In this way they provided an alternative combinatorial interpretation of $\nabla e_n$: their formula is in terms of the same objects, but using the bistatistic ($\area$, $\pmaj$). In fact, they proved that the two combinatorial formulas coincide, by defining a remarkable bijection on $(n,n)$-parking functions (identified with Dyck paths with a standard labelling) sending the bistatistic $(\dinv,\area)$ to $(\area, \pmaj)$. This bijection is an extension of the original $\zeta$ map. See~\cite{DAdderio_Sgubin_Expo} for an expository article on this bijection.

Unlike for $(\dinv,\area)$, there are not many extensions of the bistatistic $(\area,\pmaj)$ to ``labelled'' objects that are known, even conjecturally (but cf.~\cite[Sections~2.3~and~2.4]{DAdderioIraciVandenWyngaerd_bible}).

\medskip

Recently, in~\cite{DDILLV} the authors proved a new combinatorial interpretation of $\nabla e_n$ in terms of the famous \emph{sandpile model}, first introduced in~\cite{BakTangWiesenfeld} by Bak, Tang and Wiesenfeld in the context of ``self-organized criticality'' in statistical mechanics (see~\cite{Klivans_Sandpiles} for a nice introductory monograph). Their formula uncovered a surprising bridge between this heavily studied combinatorial dynamical system and the $q$,$t$-combinatorics related to Madonald polynomials, which has the shuffle theorem of Carlsson and Mellit as one of its cornerstones. In~\cite{DDILLV} the labelled Dyck paths in the $n\times n$ grid are mapped bijectively to \emph{sorted recurrent configurations} of suitable graphs; in this correspondence, the statistic $\pmaj$ is sent to a statistic $\del$ on those configurations.

\medskip

The first main contribution of the present article is an extension of both the statistic $\pmaj$ and the bijection of Loehr-Remmel in~\cite{LoehrRemmel_pmaj} to $(n,kn)$-parking functions ($k\geq 1$), identified with labelled Dyck paths in the $n\times kn$ grid with a standard labelling. As a corollary, using the rational shuffle theorem of Mellit~\cite{Mellit_Rational}, we get a new formula for the symmetric function $\nabla^k e_n$ in terms of $(n,kn)$-parking functions with the bistatistic $(\area,\pmaj)$, which can be written as
\[
	\nabla^k e_n=\sum_{D\in \PF_{n,kn}}q^{\area(D)}t^{\pmaj(D)}L_{n,\Des(\wrow(D)^{-1})}.
\]
We refer to Section~\ref{sec:SFapplications} for the missing definitions (cf.\ Corollary~\ref{cor:identitypmaj}). It should be noticed that at $k=1$ we recover the original $\pmaj$ and the original bijection from~\cite{LoehrRemmel_pmaj}. Indeed, our arguments and notation will follow closely the expository~\cite{DAdderio_Sgubin_Expo}, which concerns the case $k=1$.

Our second main contribution is an extension of the main results in~\cite{DDILLV}: indeed we provide a bijection between labelled Dyck paths in the $n\times kn$ grid and sorted recurrent configurations of suitable graphs; again, in this correspondence, the statistic $\pmaj$ is sent to a statistic $\del$ on those configurations. As a result we get a new formula for $\nabla^k e_n$ in terms of sorted recurrent configurations of the sandpile model with the bistatistic $(\area,\pmaj)$. A more precise statement can be the following (cf.\ Corollary~\ref{cor:identitydelay}: given two compositions $\mu,\nu$ such that $|\mu| + |\nu| = n$, we have
\[
\langle \nabla^k e_n, e_\mu h_\nu \rangle = \sum_{c \in \SortRec_k(\mu;\nu)}q^{\lev(c)}t^{\del(c)}.
\]
We refer to Sections~\ref{sec:sandpile} and~\ref{sec:sortedrec} for the missing definitions. At $k=1$ we recover precisely the correspondence in~\cite{DDILLV}.\smallskip\\
We like to mention that our extension of the $\pmaj$ was actually inspired by the corresponding $\del$ in the sandpile model. 

\medskip

The rest of the present article is organized in the following way. In Section~\ref{sec:LDyck} we introduce all the needed combinatorial definitions related to labelled Dyck paths in order to state the formula for $\nabla^k e_n$ in terms of parking functions with statistics $\area$ and $\dinv$, and then define our statistic $\pmaj$. In Section~\ref{sec:phi_nk} we define the map $\phi_n^{(k)}$, which extends the Loehr-Remmel bijection on parking functions of size $n$ to a bijection on $(n,kn)$-parking function, we prove that it is well defined and that it sends $\dinv$ to $\area$. In Section~\ref{sec:inverse_psi_nk} we define the inverse of $\phi_n^{(k)}$, which we denote $\psi_n^{(k)}$, and in Section~\ref{sec:psi_welldefined} we show that it is a well defined function on $(n,kn)$-parking functions. In Section~\ref{sec:is_the_inverse} we prove that $\phi_n^{(k)}$ and $\psi_n^{(k)}$ are inverse of each others, and we show that $\phi_n^{(k)}$ sends $\area$ to $\pmaj$. In Section~\ref{sec:SFapplications} we apply $\phi_n^{(k)}$ to deduce our formula for $\nabla^k e_n$ in terms of the bistatistic $(\area,\pmaj)$.

In Section~\ref{sec:sandpile} we introduce the notation and the basic notions of the sandpile model, together with our family $G_{\mu,\nu}^{(k)}$ of graphs. In Section~\ref{sec:sortedrec} we introduce the sorted recurrent configurations for $G_{\mu,\nu}^{(k)}$ and their statistics $\lev$ and $\del$. In Sections~\ref{sec:pfandsand} and~\ref{sec:bijSortedDyck} we construct the bijection between sorted recurrent configurations and labelled Dyck paths, deducing from it our formula for $\langle \nabla^k e_n,e_\mu h_\nu\rangle$ in terms of the bistatistic $(\lev,\del)$.

\medskip

\textbf{Acknowledgments.} D'Adderio is partially supported by PRIN 2022A7L229 ALTOP, and by INDAM research group GNSAGA.\\
Sgubin is Co-Funded by the European Union. Views and opinions expressed are however those of the author(s) only and do not necessarily reflect those of the European Union. Neither the European Union nor the granting authority can be held responsible for them.

\section{Labelled Dyck paths}\label{sec:LDyck}

Fix $n,k\in \N$ such that $n, k\geq 1$. We will use the common notation $[n]:=\{1,2,\dots,n\}$.

\begin{definition}\label{def:cl_Dyckpath}
	A \emph{Dyck path} in the $n\times kn$ grid is a lattice path consisting of \emph{north steps} (going from a lattice point $(a,b)$ to the point $(a,b+1)$) and \emph{east steps} (going from a lattice point $(a,b)$ to $(a+1,b)$) going from $(0,0)$ to $(kn,n)$ that never goes below the \emph{main diagonal} $y=x/k$. We denote by $\Dyck_{n,kn}$ the set of all Dyck paths in the $n\times kn$ grid. 
	
	The \emph{diagram} of a Dyck path is its picture in the $n\times kn$ square grid that bounds it.  
\end{definition}
The diagram of a Dyck path in $\Dyck_{7,21}$ is shown in Figure~\ref{fig:Dyck}. The squares intersecting the main diagonal are shaded in yellow.
\begin{figure}[ht]
	\centering
	\begin{tikzpicture}[scale=0.6]
		\parkfunc{7}{3}{21,21,18,17,10,6,5}{$\ $,$\ $,$\ $,$\ $,$\ $,$\ $,$\ $}{1};
		\draw[red,line width = 1.6pt] (0,0)|-(3,2)|-(4,3)|-(11,4)|-(15,5)|-(16,6)|-(21,7);
		\draw[green!60!black, line width = 1pt] (0,0) -- (21,7);
		\node[green!50!black] at (16,4.5) {$y = \frac{x}{k}$};
	\end{tikzpicture}
	\caption{A Dyck path in $\Dyck_{7,21}$.}\label{fig:Dyck}
\end{figure}

Given a Dyck path $D\in \Dyck_{n,kn}$, we will refer to the horizontal and vertical sequences of unit squares in its diagram as the \emph{rows} and the \emph{columns} of $D$, respectively. We number columns increasingly from left to right, and rows from bottom to top, both starting from $1$.

\begin{definition}\label{def:cl_labelledDyck}
	A \emph{labelled} Dyck path in the $n\times kn$ grid is a Dyck path $D \in \Dyck_{n,kn}$ where each north step is labelled by a positive integer, so that the labels of north steps on the same vertical line have increasing labels when read from bottom to top. If the set of $n$ labels of $D$ is precisely $[n]$ we will call $D$ a \emph{$(n,kn)$-parking function}. The \emph{diagram} of a labelled Dyck path is the diagram of the associated Dyck path with the label of each vertical step appearing in the unit square to its right. For every $i\in [n]$ we denote by $\ell_i(D)$ the label of the vertical step in row $i$, and by $f_D(i)$ the column in which the label $i$ appears in the diagram of $D$. Finally, we denote by $\LDyck_{n,kn}$ the set of all labelled Dyck paths in the $n\times kn$ grid, and by $\PF_{n,kn}$ the set of $(n,kn)$-parking functions. 
\end{definition}

\begin{remark}\label{rem:PFnkn}
	The name ``parking function'' comes from the following easy observation.\\
	If $D$ is a $(n,kn)$-parking functions according to our definition, then the functions $i\mapsto f_D(i)$ are usually called $(n,kn)$-parking functions in the literature.\smallskip\\
	Indeed, it is easy to check that for $D\in \PF_{n,kn}$, the function $f_D:[n]\to [kn]$ satisfies
	\[
		\# f_D^{-1}([1+k(i-1)])\geq i\qquad \text{for every }i\in [n],
	\]
	and viceversa any such function occurs as $f_D$ for some $D\in \PF_{n,kn}$.\smallskip\\
	Since $D\mapsto f_D$ is a bijection between $\PF_{n,kn}$ and these objects, we simply identify them, as it is customary for $k=1$. Indeed, for $k=1$, the functions $i\mapsto f_D(i)$ are precisely the usual parking functions of size $n$.
\end{remark}
The diagram of a labelled Dyck path $D$ in $\PF_{7,12}\subseteq \LDyck_{7,21}$ is shown in Figure~\ref{fig:labelled_Dyck}; its labels are $\ell_1(D)=2$, $\ell_2(D)=4$, $\ell_3(D)=6$, $\ell_4(D)=7$, $\ell_5(D)=1$, $\ell_6(D)=5$, and $\ell_7(D)=3$, while $f_D(1)=12$, $f_D(2)=1$, $f_D(3)=17$, $f_D(4)=1$, $f_D(5)=16$, $f_D(6)=4$, and $f_D(7)=5$.

\begin{figure}[ht]
	\centering
	\begin{tikzpicture}
	\fill[blue!20!white] (0.0,0.6) -- (1.8,0.6) -- (1.8,1.2) -- (0.0,1.2) -- cycle;
	\fill[blue!20!white] (1.8,1.2) -- (3.6,1.2) -- (3.6,1.8) -- (1.8,1.8) -- cycle;
	\fill[blue!20!white] (2.4,1.8) -- (5.4,1.8) -- (5.4,2.4) -- (2.4,2.4) -- cycle;
	\fill[blue!20!white] (6.6,2.4) -- (7.2,2.4) -- (7.2,3.0) -- (6.6,3.0) -- cycle;
	\fill[blue!20!white] (9.6,3.6) -- (10.8,3.6) -- (10.8,4.2) -- (9.6,4.2) -- cycle;
	\parkfunc{7}{3}{21,21,18,17,10,6,5}{2,4,6,7,1,5,3}{.6}
	\node[blue] at (14, .3) {$w_1(D) = 0$};
	\node[blue] at (14, .9) {$w_2(D) = 3$};
	\node[blue] at (14,1.5) {$w_3(D) = 3$};
	\node[blue] at (14,2.1) {$w_4(D) = 5$};
	\node[blue] at (14,2.7) {$w_5(D) = 1$};
	\node[blue] at (14,3.3) {$w_6(D) = 0$};
	\node[blue] at (14,3.9) {$w_7(D) = 2$};
	\node[blue] at (14,4.5) {$\warea(D)$};
	\end{tikzpicture}
	\caption{A labelled Dyck path $D$ in $\PF_{7,21}\subseteq \LDyck_{7,21}$ and its area word. The unit squares contributing to the area are shaded in blue.}\label{fig:labelled_Dyck}
\end{figure}

\subsection{The statistic $\area$}
The statistic $\area$ is classical.

\begin{definition}\label{def:gen_area}
Given $D \in \Dyck_{n,kn}$, we define its \emph{area word} as the $n$-tuple
\[w_{\area}(D) := (w_1(D),\dots,w_n(D))\]
where for each $i \in [n]$
	\[
	w_i(D) := \#\text{ of integer squares in the $i^{\text{th}}$-row between the path and the main diagonal}.
	\]
Then the \emph{area} of $D \in \Dyck_{n,k}$ is
	\[
	\area(D) := w_1(D) + \dots + w_n(D).
	\]
Given $D \in \LDyck_{n,kn}$, its \emph{area word} and its \emph{area} are defined as the ones of the corresponding Dyck path (i.e.\ ignoring the labels). 
\end{definition}
\begin{example}\label{ex:area}
The area word of the labelled Dyck path $D$ in Figure~\ref{fig:labelled_Dyck} is shown in the picture, i.e.\ $\warea(D)=(0,3,3,5,1,0,2)$, hence its area is $\area(D)=0+3+3+5+1+0+2=14$.	
\end{example}

\subsection{The statistic $\dinv$}\label{sec:dinv}

For the statistic $\dinv$ we will use a reformulation of the geometric description given in~\cite{hicks_dinv_descr}, which consists of two contributions, $\tdinv$ and $\dinvcorr$.
\begin{definition}\label{def:gen_tdinv}
	Given a labelled Dyck path $D \in \LDyck_{n,kn}$, 
	for every $i\in [n]$ we set
	\[
		a_{\ell_i(D)}(D):=w_i(D).
	\]
	For example, for $D$ in Figure~\ref{fig:labelled_Dyck} we have $a_1(D)=w_5(D)=1$, $a_2(D)=w_1(D)=0$, $a_3(D)=w_7(D)=2$, $a_4(D)=w_2(D)=3$, $a_5(D)=w_6(D)=0$, $a_6(D)=w_3(D)=3$, and $a_7(D)=w_4(D)=5$.

	\smallskip

	Now we say that a pair $(i,j)\in [n]\times [n]$ of labels is a \emph{temporary diagonal inversion} of $D$ if any of the following two conditions is satisfied:
	\begin{enumerate}[label=(\Alph*)]
		\item\label{it:gen_dinv_condA} $f_D(i)<f_D(j)$, $i<j$ and $a_j(D)-k<a_i(D)\leq a_j(D)$.
		\item\label{it:gen_dinv_condB} $f_D(i)<f_D(j)$, $i>j$ and $a_j(D)<a_i(D)\leq a_j(D)+k$.
	\end{enumerate} 

	We denote by $\Tdinv(f)$ the multiset\footnote{Notice that $\Tdinv(D)$ will always be a set, but we consider it a multiset since later we will add it to the multiset $\Dinvcorr(D)$.} of all temporary diagonal inversions of $D$, and we define the $\tdinv$ of $D$ as the cardinality of $\Tdinv(D)$, i.e. \[\tdinv(D):=\# \Tdinv(D).\]
\end{definition}
This definition is better understood by the next geometric remark.

\begin{definition}
	Given a north step of a Dyck path $D \in \Dyck_{n,kn}$ with endpoints $(a,b)$ and $(a,b+1)$ we define its \emph{main shadow} as the region of $\R^2$ whose points $(x,y)$ satisfy the inequalities
	\begin{equation}\label{eq:mainshadow}
		\begin{cases}
			x \leq a\\
			\frac{1}{k}(x - a) < y - b \leq \frac{1}{k}(x - a) + 1
		\end{cases}\, ,
	\end{equation}
	while its \emph{upper shadow} is the region of $\R^2$ whose points $(x,y)$ satisfy the inequalities
	\begin{equation}\label{eq:uppershadow}
		\begin{cases}
			x \leq a\\
			\frac{1}{k}(x - a) < y - b-1 \leq \frac{1}{k}(x - a) + 1
		\end{cases} \, .
	\end{equation}
	These shadows are shown in Figure~\ref{fig:def_tdinv}; notice that they are parallel to the main diagonal $y=x/k$, and they are half open, as indicated by the dotted borders.
\end{definition}
\begin{figure}[ht]
	\centering
	\begin{tikzpicture}[scale=0.8]
		\node at (0,0) {
			\begin{tikzpicture}[scale=0.8]
				\node at (-2.8,3) {\bfseries Condition (A):};
				\node at (-2.8,2.3) {$i < j$};
				\node[green!60!black,scale=.8] at (-2.8,.3) {Main shadow of $j$};
				\node[orange!80!black,scale=.8] at (-2.8,1) {Upper shadow of $j$};
				\dyckpath{5}{3}{15,13,12,7,6}
				\fill[opacity=.3, green!60!black] (5.6,2.8) -- (5.6,2.1) -- (-.3,0.135) -- (-.3,.835) -- cycle;
				\fill[opacity=.2, orange!80!black] (5.6,3.5) -- (5.6,2.8) -- (-.7,.7) -- (-.7,1.4) -- cycle;
				\draw[green!60!black] (-.3,.835) -- (5.6,2.8);
				\draw[green!60!black, dashed] (-.3,.135) -- (5.6,2.1);
				\draw[very thick, green!60!black] (5.6,2.1) -- (5.6,2.8) node [midway, right] {\footnotesize{$j$}};
				\draw[orange!80!black] (-.7,1.4) -- (5.6,3.5);
				\draw[orange!80!black, dashed] (-.7,.7) -- (5.6,2.8);
				\draw[orange!80!black] (5.6,2.8) -- (5.6,3.5);
				\node[blue, scale=1.2] at (1.4,1.35) {\small{\textbullet}};
				\node[blue, scale=1.2] at (0,.65) {\small{\textbullet}};
				\draw[very thick, blue] (1.4,.7) -- (1.4,1.4) node [midway, right] {\footnotesize{$i$}};
				\draw[very thick, blue] (0,0) -- (0,.7) node [midway, right] {\footnotesize{$i$}};
			\end{tikzpicture}
		};
		\node at (0,-4.2) {
			\begin{tikzpicture}[scale=0.8]
				\node at (-2.8,3) {\bfseries Condition (B):};
				\node at (-2.8,2.3) {$i > j$};
				\node[green!60!black,scale=.8] at (-2.8,.3) {Main shadow of $j$};
				\node[orange!80!black,scale=.8] at (-2.8,1) {Upper shadow of $j$};
				\dyckpath{5}{3}{15,13,12,7,6}
				\fill[opacity=.3, green!60!black] (5.6,2.8) -- (5.6,2.1) -- (-.3,0.135) -- (-.3,.835) -- cycle;
				\fill[opacity=.2, orange!80!black] (5.6,3.5) -- (5.6,2.8) -- (-.7,.7) -- (-.7,1.4) -- cycle;
				\draw[green!60!black] (-.3,.835) -- (5.6,2.8);
				\draw[green!60!black, dashed] (-.3,.135) -- (5.6,2.1);
				\draw[very thick, green!60!black] (5.6,2.1) -- (5.6,2.8) node [midway, right] {\footnotesize{$j$}};
				\draw[orange!80!black] (-.7,1.4) -- (5.6,3.5);
				\draw[orange!80!black, dashed] (-.7,.7) -- (5.6,2.8);
				\draw[orange!80!black] (5.6,2.8) -- (5.6,3.5);
				\node[blue, scale=1.2] at (2.1,2.05) {\small{\textbullet}};
				\draw[very thick, blue] (2.1,1.4) -- (2.1,2.1) node [midway, right] {\footnotesize{$i$}};
			\end{tikzpicture}
		};
	\end{tikzpicture}
	\caption{Graphical depiction of conditions (A) and (B) in the definition of $\Tdinv(D)$.}\label{fig:def_tdinv}
\end{figure}

\begin{remark}\label{rem:gen_tdinv_graphic}
Conditions $(A)$ and $(B)$ correspond to the intersection of the north endpoint of the north step labelled $i$ with the main shadow and the upper shadow of (the north step labelled) $j$, respectively: see Figure~\ref{fig:def_tdinv}.
\end{remark}

\begin{figure}[htbp]
	\centering
	\begin{tikzpicture}
		\parkfunc{7}{3}{21,21,18,17,10,6,5}{2,4,6,7,1,5,3}{.6}
		\draw[green!60!black, dotted, thick] (-.6,0) -- (6.6,2.4);
		\draw[green!60!black] (6.6,2.4) -- (6.6,3.0) -- (-.6,.6);
		\fill[green!60!black, opacity=.3] (-.6,0) -- (6.6,2.4) -- (6.6,3.0) -- (-.6,.6);
		\draw[orange!80!black, dotted, thick] (-.6,.6) -- (6.6,3);
		\draw[orange!80!black] (6.6,3) -- (6.6,3.6) -- (-.6,1.2);
		\fill[orange!80!black, opacity=.2] (-.6,.6) -- (6.6,3) -- (6.6,3.6) -- (-.6,1.2);
		\draw[blue, thick, dotted] (6,2.4) -- (9.6,3.6);
		\draw[blue, thick, dotted] (6,3.0) -- (9.6,4.2) -- (9.6,3.6);
		\draw[very thick, blue] (6.6,2.4) -- (6.6,3.0);
		\node[blue,font=\bfseries, scale={6/7}] at (6.9,2.7) {1};
		\draw[very thick, blue] (0,.6) -- (0,1.2);
		\node[blue] at (0,1.2) {\textbullet};
		\draw[very thick, blue] (1.8,1.2) -- (1.8,1.8);
		\node[blue] at (1.8,1.8) {\textbullet};
		\draw[very thick, blue] (9.6,4.2) -- (9.6,3.6);
		\node[blue] at (9.6,4.2) {\textbullet};
	\end{tikzpicture}
	\caption{Illustration of the temporary diagonal inversions involving the north step labelled $1$.}\label{fig:example_tdinv}
\end{figure}

\begin{example}
	For the labelled Dyck path $D$ in Figure~\ref{fig:labelled_Dyck} we have\footnote{We denote the multisets with double curly brackets.\label{fn:multisets}} (cf.\ Figure~\ref{fig:example_tdinv})
	\[\Tdinv(D)=\left\{\!\!\left\{ \begin{array}{c}
		(1, 3), (2, 3), (2, 5), (4, 1), (4, 3), (4, 6),\\
		(4, 7), (6, 1), (6, 3), (6, 5), (6, 7), (7, 3)
	\end{array} \right\}\!\!\right\}.\]
\end{example}

\begin{definition}\label{def:gen_EAST_labels}
	Given a labelled Dyck path $D\in \LDyck_{n,kn}$, for every east step of its diagram with endpoints $(c,d)$ and $(c+1,d)$ we define its \emph{main shadow} as the region of $\R^2$ whose points $(x,y)$ satisfy the inequalities:
	\[
		\begin{cases}
			y \leq d\\
			\frac{1}{k}(x - c) < y - d < \frac{1}{k}(x - c - 1).
		\end{cases}
	\]
	Notice that the main shadow of every east step of $D$ will intersect at least one north step of $D$ to its west.

	The \emph{east step labelling} of $D$ consists of labelling every east step of $D$ with the label of the first north step encountered by its shadow, moving from the east step towards west.
\end{definition}

\begin{example}\label{exa:gen_EAST_labels}
	In Figure~\ref{fig:gen_dinvcompA} we computed the east step labelling of the labelled Dyck path $D$ in Figure~\ref{fig:labelled_Dyck}.
\end{example}

\begin{remark}\label{rmk:east_step_labelling}
Observe that for every label $i$ of a north step of $D$ there are exactly $k$ east steps of $D$ that gets labelled with $i$. Cf.\ Remark~\ref{rem:dlambda}.
\end{remark}

\begin{definition}\label{def:gen_dinvcorr_set}
	Given $D\in \LDyck_{n,kn}$, consider its east step labelling. We define the \emph{multiset of diagonal inversion corrections}, denoted $\Dinvcorr(D)$, as the multiset of all pairs $(\lambda, \mu)$ where $\lambda$ is the label of a east step of $D$ entirely\footnote{Recall that the main shadow of a north step is half open.} contained in the main shadow of a north step of $D$ labelled $\mu$. We define $\dinvcorr(D)$ as the cardinality of the multiset $\Dinvcorr(D)$.
\end{definition}

\begin{example}\label{exa:gen_def_dinvcorrmulti}
	Consider the labelled Dyck path $D$ of Figure~\ref{fig:labelled_Dyck}, whose east step labelling is shown in Figure~\ref{fig:gen_dinvcompA}. We compute$^{\ref{fn:multisets}}$
	\[\Dinvcorr(D) = \left\{\!\!\left\{ \begin{array}{c}
		(1, 5), (1, 5),  (2, 5), (2, 5), (2, 1), (6, 1),  (4, 1), \\
		(1, 3), (6, 3), (6, 3), (4, 3), (4, 3), (4, 6), (4, 6)
	\end{array} \right\}\!\!\right\},\]
	hence $\dinvcorr(D)=14$.
\end{example}
\begin{figure}[hptb]
	\centering
	\begin{tikzpicture}
		\draw[opacity=0] (-.6,0) -- (-.6,1);    
		\parkfunc{7}{3}{21,21,18,17,10,6,5}{2,4,6,7,1,5,3}{.6}
		
		\node[green!60!black,font=\bfseries,scale={6/7}] at (  .3,1.4) {4};
		\node[green!60!black,font=\bfseries,scale={6/7}] at (  .9,1.4) {4};
		\node[green!60!black,font=\bfseries,scale={6/7}] at ( 1.5,1.4) {4};
		\node[green!60!black,font=\bfseries,scale={6/7}] at ( 2.1,2.0) {6};
		\node[green!60!black,font=\bfseries,scale={6/7}] at ( 2.7,2.6) {7};
		\node[green!60!black,font=\bfseries,scale={6/7}] at ( 3.3,2.6) {7};
		\node[green!60!black,font=\bfseries,scale={6/7}] at ( 3.9,2.6) {7};
		\node[green!60!black,font=\bfseries,scale={6/7}] at ( 4.5,2.6) {6};
		\node[green!60!black,font=\bfseries,scale={6/7}] at ( 5.1,2.6) {6};
		\node[green!60!black,font=\bfseries,scale={6/7}] at ( 5.7,2.6) {2};
		\node[green!60!black,font=\bfseries,scale={6/7}] at ( 6.3,2.6) {2};
		\node[green!60!black,font=\bfseries,scale={6/7}] at ( 6.9,3.2) {1};
		\node[green!60!black,font=\bfseries,scale={6/7}] at ( 7.5,3.2) {1};
		\node[green!60!black,font=\bfseries,scale={6/7}] at ( 8.1,3.2) {1};
		\node[green!60!black,font=\bfseries,scale={6/7}] at ( 8.7,3.2) {2};
		\node[green!60!black,font=\bfseries,scale={6/7}] at ( 9.3,3.8) {5};
		\node[green!60!black,font=\bfseries,scale={6/7}] at ( 9.9,4.4) {3};
		\node[green!60!black,font=\bfseries,scale={6/7}] at (10.5,4.4) {3};
		\node[green!60!black,font=\bfseries,scale={6/7}] at (11.1,4.4) {3};
		\node[green!60!black,font=\bfseries,scale={6/7}] at (11.7,4.4) {5};
		\node[green!60!black,font=\bfseries,scale={6/7}] at (12.3,4.4) {5};
		\draw[blue!50, dashed] (0,0.6) -- (9*6/10,4*6/10);
		\draw[blue!50, dashed] (4*6/10,3*6/10) -- (7*6/10,4*6/10);
		\draw[blue!50, dashed] (0,0) -- (21*6/10,7*6/10);
		\draw[blue!50, dashed] (11*6/10,4*6/10) -- (14*6/10,5*6/10);
		\draw[blue!50, dashed] (16*6/10,6*6/10) -- (19*6/10,7*6/10);
	\end{tikzpicture}
	\caption{The east step labelling of $D \in \PF_{7,21}\subseteq \LDyck_{7,21}$ from Figure~\ref{fig:labelled_Dyck}.}\label{fig:gen_dinvcompA}
\end{figure}

\begin{definition}
	Given $D\in \LDyck_{n,kn}$, we define the multiset $\Dinv(D)$ of diagonal inversions of $D$ as the sum of the multisets $\Tdinv(D)$ and $\Dinvcorr(D)$. We define the $\dinv$ of $D$ as the cardinality of $\Dinv(D)$, i.e.\ 
	\[\dinv(D):=\tdinv(D)+\dinvcorr(D).\]
\end{definition}

\begin{example}\label{ex:dinv}
	For the labelled Dyck path $D$ in Figure~\ref{fig:labelled_Dyck}, we have$^{\ref{fn:multisets}}$
	\begin{equation}\label{eq:gen_dinv_set_exa}
	\Dinv(f) = \left\{\!\!\!\left\{ \begin{array}{c}
		(2, 5), (2, 5), (2, 5),\\
		(1, 5), (1, 5), (2, 1),\\
		(1, 3), (2, 3), (1, 3),\\
		(4, 1), (4, 3), (4, 1), (4, 3), (4, 3),\\
		(6, 1), (6, 3), (4, 6), (6, 5), (6, 1), (6, 3), (6, 3), (4, 6), (4, 6),\\
		(4, 7), (6, 7), (7, 3)
	\end{array} \right\}\!\!\!\right\}\, ,
	\end{equation}
	so $\dinv(D)=\tdinv(D)+\dinvcorr(D)=12+14=26$.
\end{example}

\begin{definition}
	Given $D\in \LDyck_{n,kn}$, we define its \emph{diagonal reading word} $\wdiag(D)$ as the word obtained by reading the labels of the north steps of $D$ by sweeping its diagram with diagonals parallel to $y=x/k$, from bottom to top, reading the labels along the same diagonal from left to right. Notice that $\wdiag(D)$ is a permutation in $\mathfrak{S}_n$ (in one-line notation).
\end{definition}
For example, for the labelled Dyck path $D$ in Figure~\ref{fig:labelled_Dyck} we have $\wdiag(D)=2513467$.

\smallskip

Recall that given $S\subseteq [n-1]$, the \emph{(Gessel) fundamental quasisymmetric function} $L_{n,S}(x)$ is defined as
\[L_{n,S}=L_{n,S}(x):=\mathop{\sum_{1\leq i_1\leq i_2\leq \dots \leq i_n}}_{i_j=i_{j+1}\implies j\notin S}x_{i_1}x_{i_2}\dots x_{i_n}.\]
Also, we denote by $e_n$ the elementary symmetric function of degree $n$, and by $\nabla$ the famous \emph{nabla operator} from~\cite{Bergeron-Garsia-Haiman-Tesler-Positivity-1999}.\\
Finally, the \emph{descent set} of a permutation $\sigma\in \mathfrak{S}_n$ is 
\[
	\Des(\sigma):=\{i\mid \sigma(i)>\sigma(i+1)\}\subseteq [n-1].
\]

\smallskip

We can now state a special case of the so called \emph{rational shuffle theorem} of Mellit.
\begin{theorem}[{\cite{Mellit_Rational}}]\label{thm:rationalMellit}
	We have	
	\[
		\nabla^k e_n=\sum_{D\in \PF_{n,kn}}q^{\dinv(D)}t^{\area(D)}L_{n,\Des(\wdiag(D)^{-1})}.
	\]
\end{theorem}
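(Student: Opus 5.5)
Since the statement is quoted from~\cite{Mellit_Rational}, the plan is to specialize Mellit's rational shuffle theorem to the slope $(n,kn)$ and then show that his statistics agree with the ones defined here. In Mellit's formulation, for coprime $(m,n)$ the operator $Q_{m,n}$ applied to $(-1)^n$ gives a sum over rational parking functions. The non-coprime case $(n,kn)$ is handled through the operator $e_n[-MB_{k}]$-type compositions, and for slope $k$ this reduces to $\nabla^k e_n$. That identification is via the Bergeron--Garsia--Haiman--Tesler operator identities, namely $Q_{kn+1,n}$-type relations and the known equality $\nabla^k e_n$ = the $(n,kn)$ case of the rational shuffle conjecture of Bergeron--Garsia--Leven--Xin. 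The first step is therefore to quote the $(n,kn)$ case in its standard form:
\[
\nabla^k e_n=\sum_{PF} q^{\dinv'(PF)}t^{\area(PF)}L_{n,\Des(\mathrm{word})},
\]
where $\dinv'$ is the dinv of Haglund--Loehr / Hicks--Kim for $k$-parking functions, and the word is the diagonal reading word read along lines of slope $1/k$.

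The second step is to match $\area$ and the quasisymmetric part. For $\area$, the count of full squares between the path and $y=x/k$ row by row is the same in both conventions. For the quasisymmetric part, the reading order "diagonals from bottom to top, left to right inside a diagonal" is exactly the order used by Mellit / Hicks–Kim. The superscript ${}^{-1}$ is there because $L_{n,\Des(\sigma^{-1})}$ records the positions of the labels $i,i+1$ in the word. This is the standard conversion from a sum over labels to Gessel's expansion: summing monomials over word-labellings that standardize to $\wdiag(D)$ yields $L_{n,\Des(\wdiag(D)^{-1})}$.

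The main step, and the main obstacle, is showing $\dinv(D)=\tdinv(D)+\dinvcorr(D)$ equals the published $k$-dinv. The published $k$-dinv is given as a sum over pairs of rows of $\max(0,\,k-|\text{some shifted area difference}|)$-type counts, plus a correction over rows of the form $\sum_i \#\{\text{cells}\ldots\}$. This is the Hicks–Kim geometric description in~\cite{hicks_dinv_descr}, built on ``arms and legs'' of cells above the path. I would first work with unlabelled Dyck paths, where all labels are equal and only the $\dinvcorr$-type cell count survives, and check that the east-step shadow count reproduces Hicks–Kim's count of cells $c$ with $\mathrm{arm}/(\mathrm{leg}+1)<k<(\mathrm{arm}+1)/\mathrm{leg}$. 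The bijection I would use sends each east step labelled $\lambda$, lying in the main shadow of $\mu$, to the cell at the intersection of the column of that east step and the row of $\mu$.

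Then I would handle the labels. Conditions (A) and (B) compare $a_i$ and $a_j$ within a window of width $k$ with the $i<j$ asymmetry, which is exactly the labelled correction ``pairs of north steps whose endpoints lie in adjacent shadows with the right label order''. Finally, I would verify that the total agrees on the running example ($\dinv=26$) as a sanity check.
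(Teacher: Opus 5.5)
The paper gives no argument of its own for this statement: it quotes Mellit and takes $\dinv$ from~\cite{hicks_dinv_descr}. Your plan (specialize the rational shuffle theorem and match conventions) is therefore the natural one, and your first step is acceptable as a citation. However, all the content lies in the matching, and both of your matching claims fail.

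\textbf{First, the reading word.} The order ``diagonals bottom to top, left to right'' is the \emph{reverse} of the standard order, which reads the highest diagonal first and goes northeast to southwest within a diagonal (decreasing rank in the rational literature). Suppose you actually run the standardization you sketch. Take two north steps $P$ left of $Q$ carrying equal labels, with area contributions $a_P,a_Q$. The strict inequalities $i<j$ in (A) and $i>j$ in (B) force the tie-break:
\begin{itemize}
\item $Q$ must receive the smaller value when $a_Q-k<a_P\le a_Q$;
\item $P$ must receive it when $a_Q<a_P\le a_Q+k$.
\end{itemize}
So ties are broken by reading higher area first and, within a diagonal, right to left. Since reversing a word complements $\Des(w^{-1})$, the word labellings standardizing to $D$ sum to $L_{n,[n-1]\setminus\Des(\wdiag(D)^{-1})}$, not $L_{n,\Des(\wdiag(D)^{-1})}$.

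This is not cosmetic. For $n=2$, $k=1$ the three parking functions contribute as follows:
\begin{itemize}
\item NENE with labels $1,2$ from the bottom gives $q\,L_{2,\varnothing}$;
\item NENE with labels $2,1$ gives $L_{2,\{1\}}$;
\item NNEE gives $t\,L_{2,\varnothing}$.
\end{itemize}
So the displayed right-hand side equals $(q+t)s_2+s_{11}=\omega\nabla e_2$, whereas $\nabla e_2=s_2+(q+t)s_{11}$; the same happens for $k=2$. Hence no proof can give the identity exactly as displayed. It holds with $\wdiag(D)$ reversed, or equivalently with $\omega\nabla^k e_n$ on the left. The paper's later formulas for $\langle\nabla^k e_n,e_\mu h_\nu\rangle$, which pair $e$ with increasing and $h$ with decreasing blocks, agree with this corrected form. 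Your check $\dinv=26$ only tests the $q$-statistic and cannot detect the problem.

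\textbf{Second, the decomposition of $\dinv$.} Making all labels equal kills $\tdinv$. But the unlabelled dinv (the one in $\langle\nabla^k e_n,e_n\rangle$) is the dinv of the labelling with $\wdiag(D)=12\cdots n$, and for that labelling $\tdinv$ is \emph{maximal}, not zero. So the arm--leg count equals $\mathrm{maxtdinv}(\pi)+\dinvcorr(\pi)$. You must match $\dinvcorr$ with the correction $\dinv(\pi)-\mathrm{maxtdinv}(\pi)$ in $\dinv=\tdinv+\dinv(\pi)-\mathrm{maxtdinv}(\pi)$, not with $\dinv(\pi)$ itself. Two small cases show the mismatch:
\begin{itemize}
\item for $k=1$, $\dinvcorr\equiv 0$ (a pair occurs at most $k-1$ times), yet NENE has unlabelled dinv $1$;
\item for $n=k=2$, NEENEE has $\dinvcorr=1$ but unlabelled dinv $2$.
\end{itemize}
So the bijection you propose from east-step/shadow pairs onto arm--leg cells cannot exist, and without the $\mathrm{maxtdinv}$ term your final formula would be wrong. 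Also, at integer slope one inequality in the cell criterion must be weak: $\mathrm{arm}/(\mathrm{leg}+1)\le k<(\mathrm{arm}+1)/\mathrm{leg}$. With both inequalities strict you already lose the cells with $\mathrm{arm}=\mathrm{leg}+1$ at $k=1$.
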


\subsection{The statistic $\pmaj$}

Our definition of the $\pmaj$ statistic for labelled Dyck paths in $\LDyck_{n,kn}$ is new for $k>1$, while for $k=1$ we recover the original one defined in~\cite{LoehrRemmel_pmaj}.

\begin{definition}\label{def:gen_pmaj}
	Consider a labelled Dyck path $D \in \LDyck_{n,kn}$. We define a word in the alphabet $[n]$, denoted  $\sigma_{\pmaj}(D) := \sigma_1\sigma_2\dots \sigma_{kn}$, such that every letter appears $k$ times.\\
	Let $B_0' := \varnothing$ be the empty multiset$^{\ref{fn:multisets}}$ 
	and $\sigma_0 := n+1$. Then iterate the following steps for $m = 1,2,\dots,kn$.
	\begin{enumerate}[label=\roman*)]
		\item Set\footnote{We will use the exponential notation for the elements of multisets, indicating the multiplicities with the exponents: for example we will write $\{\!\!\{2^3,4,5^2\}\!\!\}$ for $\{\!\!\{2,2,2,4,5,5\}\!\!\}$.} $B_m := B_{m-1}' + \big\{\!\!\big\{ \lambda^k  \mid  f_D(\lambda)=m \big\}\!\!\big\}=B_{m-1}' + \big\{\!\!\big\{ \underbrace{\lambda, \lambda,\dots,\lambda}_{\text{$k$ times}} \mid  f_D(\lambda)=m \big\}\!\!\big\}$.
		\item Consider the multiset $X_m := \{\!\!\{a \in B_m \ | \ a < \sigma_{m-1}\}\!\!\}$. There are two cases: if $X_m \neq \varnothing$ then set $\sigma_m := \max(X_m)$, otherwise set $\sigma_m := \max(B_m)$.
		\item Remove an occurrence of $\sigma_m$ from the multiset $B_m$ to obtain $B_m' := B_m - \{\!\!\{\sigma_m\}\!\!\}$.
	\end{enumerate}
	It is easy to check that this algorithm is well defined: from the condition on $D$ of being a labelled Dyck path we have that $B_m\neq \varnothing$ for every $m\in [kn]$.\\
	Now for each label $\lambda \in [n]$ define its \emph{pmaj contribution} as
	\[
	p_\lambda(D) := \#\text{ of weak ascents before the leftmost occurrence of $\lambda$ in $\sigma_{\pmaj}(D)$}.
	\] 
	Finally, we can define the \emph{pmaj statistic} for $D$ by summing these contributions:
	\[
	\pmaj(D) := p_1(D)+p_2(D)+\dots +p_n(D).
	\]
\end{definition}

\begin{example}\label{exa:gen_pmaj}
	In Figure~\ref{fig:gen_pmaj} we record in a table the steps of the algorithm that computes the $\pmaj$ of the labelled Dyck path $D$ in Figure~\ref{fig:gen_exa_imgphi}: we have $\pmaj(D)=14$ (and $\area(D)=26$).
	\begin{figure}[ht]
		\centering
		\begin{tikzpicture}
			\node[scale=.8] at (0,0) {
				\begin{tikzpicture}
					\parkfunc{7}{3}{21,21,18,15,15,14,6}{2,5,1,3,6,4,7}{.7}
					\node[green!40!black] at ( -1.0,-.5) {$\sigma_{\pmaj}(D)$};
					\node[green!40!black] at ( 0.35,-.5) {5};
					\node[green!40!black] at ( 1.05,-.5) {2};
					\node[green!40!black] at ( 1.75,-.5) {5};
					\node[green!40!black] at ( 2.45,-.5) {2};
					\node[green!40!black] at ( 3.15,-.5) {1};
					\node[green!40!black] at ( 3.85,-.5) {5};
					\node[green!40!black] at ( 4.55,-.5) {3};
					\node[green!40!black] at ( 5.25,-.5) {2};
					\node[green!40!black] at ( 5.95,-.5) {1};
					\node[green!40!black] at ( 6.65,-.5) {6};
					\node[green!40!black] at ( 7.35,-.5) {4};
					\node[green!40!black] at ( 8.05,-.5) {3};
					\node[green!40!black] at ( 8.75,-.5) {1};
					\node[green!40!black] at ( 9.45,-.5) {6};
					\node[green!40!black] at (10.15,-.5) {4};
					\node[green!40!black] at (10.85,-.5) {3};
					\node[green!40!black] at (11.55,-.5) {7};
					\node[green!40!black] at (12.25,-.5) {6};
					\node[green!40!black] at (12.95,-.5) {4};
					\node[green!40!black] at (13.65,-.5) {7};
					\node[green!40!black] at (14.35,-.5) {7};
					\draw[very thick,blue] ( 1.4,-.2) -- ( 1.4,-.8);
					\draw[very thick,blue] ( 3.5,-.2) -- ( 3.5,-.8);
					\draw[very thick,blue] ( 6.3,-.2) -- ( 6.3,-.8);
					\draw[very thick,blue] ( 9.1,-.2) -- ( 9.1,-.8);
					\draw[very thick,blue] (11.2,-.2) -- (11.2,-.8);
					\draw[very thick,blue] (13.3,-.2) -- (13.3,-.8);
					\draw[very thick,blue] (14.0,-.2) -- (14.0,-.8);
					\node[blue] at (-1.0,-1.2) {$p_{\lambda}(D)$};
					\node[blue] at ( 0.35,-1.2) {0};
					\node[blue] at ( 1.05,-1.2) {0};
					\node[blue] at ( 3.15,-1.2) {1};
					\node[blue] at ( 4.55,-1.2) {2};
					\node[blue] at ( 6.65,-1.2) {3};
					\node[blue] at ( 7.35,-1.2) {3};
					\node[blue] at (11.55,-1.2) {5};
				\end{tikzpicture}
			};
		\end{tikzpicture}
		\caption{The diagram of $\phi_{7}^{(3)}(D)$ for $D$ in Figure~\ref{fig:labelled_Dyck}, its $\sigma_{\pmaj}(D)$ word (the bars indicate the weak ascents), and its $\pmaj$ contributions.}\label{fig:gen_exa_imgphi}
	\end{figure}
	\begin{figure}[ht]
		\centering
		\begin{tikzpicture}
			\node[scale=.75, below] at (-1,0) {
				\renewcommand{\arraystretch}{1.2}
				\begin{tabular}{l|l l l l|}
					$m$ & $B_m$ & $X_m$ & $\sigma_m$ & $B_m'$\\
					\hline
					1 &           $\{\!\!\{2^3,5^3\}\!\!\}$ & $\{\!\!\{2^3,5^3\}\!\!\}$ & 5 &           $\{\!\!\{2^3,5^2\}\!\!\}$\\
					2 &             $\{\!\!\{2^3,5^2\}\!\!\}$ &       $\{\!\!\{2^3\}\!\!\}$ & 2 &             $\{\!\!\{2^2,5^2\}\!\!\}$\\
					3 &               $\{\!\!\{2^2,5^2\}\!\!\}$ &     $\varnothing$ & 5 &               $\{\!\!\{2^2,5\}\!\!\}$\\
					4 &           $\{\!\!\{1^3,2^2,5\}\!\!\}$ &   $\{\!\!\{1^3,2^2\}\!\!\}$ & 2 &           $\{\!\!\{1^3,2,5\}\!\!\}$\\
					5 &             $\{\!\!\{1^3,2,5\}\!\!\}$ &       $\{\!\!\{1^3\}\!\!\}$ & 1 &             $\{\!\!\{1^2,2,5\}\!\!\}$\\
					6 &               $\{\!\!\{1^2,2,5\}\!\!\}$ &     $\varnothing$ & 5 &               $\{\!\!\{1^2,2\}\!\!\}$\\
					7 &     $\{\!\!\{1^2,2,3^3,6^3\}\!\!\}$ & $\{\!\!\{1^2,2,3^3\}\!\!\}$ & 3 &     $\{\!\!\{1^2,2,3^2,6^3\}\!\!\}$\\
					8 & $\{\!\!\{1^2,2,3^2,4^3,6^3\}\!\!\}$ &       $\{\!\!\{1^2,2\}\!\!\}$ & 2 & $\{\!\!\{1^2,3^2,4^3,6^3\}\!\!\}$\\
					9 &   $\{\!\!\{1^2,3^2,4^3,6^3\}\!\!\}$ &         $\{\!\!\{1^2\}\!\!\}$ & 1 &   $\{\!\!\{1,3^2,4^3,6^3\}\!\!\}$\\
					10 &     $\{\!\!\{1,3^2,4^3,6^3\}\!\!\}$ &     $\varnothing$ & 6 &     $\{\!\!\{1,3^2,4^3,6^2\}\!\!\}$\\
					11 &       $\{\!\!\{1,3^2,4^3,6^2\}\!\!\}$ & $\{\!\!\{1,3^2,4^3\}\!\!\}$ & 4 &       $\{\!\!\{1,3^2,4^2,6^2\}\!\!\}$\\
									\end{tabular}
			};
			\node[scale=.75, below] at (6.8,0) {
				\renewcommand{\arraystretch}{1.2}
				\begin{tabular}{l|l l l l|}
					$m$ & $B_m$ & $X_m$ & $\sigma_m$ & $B_m'$\\
					\hline
					12 &         $\{\!\!\{1,3^2,4^2,6^2\}\!\!\}$ &       $\{\!\!\{1,3^2\}\!\!\}$ & 3 &         $\{\!\!\{1,3,4^2,6^2\}\!\!\}$\\
					13 &           $\{\!\!\{1,3,4^2,6^2\}\!\!\}$ &           $\{\!\!\{1\}\!\!\}$ & 1 &           $\{\!\!\{3,4^2,6^2\}\!\!\}$\\
					14 &             $\{\!\!\{3,4^2,6^2\}\!\!\}$ &     $\varnothing$ & 6 &             $\{\!\!\{3,4^2,6\}\!\!\}$\\
					15 &               $\{\!\!\{3,4^2,6\}\!\!\}$ &       $\{\!\!\{3,4^2\}\!\!\}$ & 4 &               $\{\!\!\{3,4,6\}\!\!\}$\\
					16 &           $\{\!\!\{3,4,6,7^3\}\!\!\}$ &           $\{\!\!\{3\}\!\!\}$ & 3 &           $\{\!\!\{4,6,7^3\}\!\!\}$\\
					17 &             $\{\!\!\{4,6,7^3\}\!\!\}$ &     $\varnothing$ & 7 &             $\{\!\!\{4,6,7^2\}\!\!\}$\\
					18 &               $\{\!\!\{4,6,7^2\}\!\!\}$ &         $\{\!\!\{4,6\}\!\!\}$ & 6 &               $\{\!\!\{4,7^2\}\!\!\}$\\
					19 &                 $\{\!\!\{4,7^2\}\!\!\}$ &           $\{\!\!\{4\}\!\!\}$ & 4 &                 $\{\!\!\{7^2\}\!\!\}$\\
					20 &                   $\{\!\!\{7^2\}\!\!\}$ &     $\varnothing$ & 7 &                   $\{\!\!\{7\}\!\!\}$\\
					21 &                     $\{\!\!\{7\}\!\!\}$ &     $\varnothing$ & 7 &             $\varnothing$
				\end{tabular}
			};
		\end{tikzpicture}
		\caption{Computation of the $\pmaj$ statistic for the labelled Dyck path in Figure~\ref{fig:gen_exa_imgphi}.}\label{fig:gen_pmaj}
	\end{figure}
\end{example}

\section{The map $\phi_n^{(k)}$} \label{sec:phi_nk}

In this section we define a bijection $\phi_n^{(k)}:\PF_{n,kn}\to \PF_{n,kn}$. Our definition will be an extension of the definition of the map for $k=1$ given in \cite{DAdderio_Sgubin_Expo}: while being an expository article about the original bijection of Loehr and Remmel, the alternative definition given in \cite{DAdderio_Sgubin_Expo} of $\phi_n=\phi_n^{(1)}$ was new, and it inspired the extension that we are presenting in this section. Also, our arguments and notation about $\phi_n^{(k)}$ will follow closely \cite{DAdderio_Sgubin_Expo}.

To define $\phi_n^{(k)}$ we need one more definition.

\begin{definition}
	Consider $D\in \LDyck_{n,kn}$ and let $\wdiag(D)=\sigma_1\sigma_2\dots \sigma_n$ be its diagonal reading word. We define the \emph{dinv contribution} of a label $\lambda\in [n]$ by
	\[
		d_\lambda(D):=\#\{\!\!\{(\sigma_i,\sigma_j)\in \Dinv(D)\mid \sigma_{\max(i,j)}=\lambda\}\!\!\}.
	\]
	Clearly we have $\sum_{\lambda\in [n]}d_\lambda(D)=\dinv(D)$.
\end{definition}
For example, for the labelled Dyck path $D$ in Figure~\ref{fig:labelled_Dyck}, $\wdiag(D)=2513467$, and we compute $d_2(D)=0$, $d_5(D)=3$, $d_1(D)=3$, $d_3(D)=3$, $d_4(D)=5$, $d_6(D)=9$, and $d_7(D)=3$: cf.\ Equation~\eqref{eq:gen_dinv_set_exa}, where each row shows the pairs contributing to a $\lambda\in [7]\setminus\{2\}$ ($2$ is the first letter of $\wdiag(D)$ hence necessarily $d_2(D)=0$).

\begin{definition}\label{def:gen_phi}
	Given $D\in \PF_{n,kn}$, define $\phi_n^{(k)}(D)$ as the unique $(n,kn)$-parking function in $\PF_{n,kn}$ such that 
	\[
		f_{\phi_n^{(k)}(D)}(\sigma_i)=k(i-1)-d_{\sigma_i}(D)+1\, ,
	\]
	where $\wdiag(D)=\sigma_1\sigma_2\dots \sigma_n$.
\end{definition}
This definition is better understood by an example.
\begin{example}
	Consider the labelled Dyck path $D$ in Figure~\ref{fig:labelled_Dyck} (here $n=7$ and $k=3$). We already computed $\wdiag(D)=2513467$, and its dinv contributions $d_2(D)=0$, $d_5(D)=3$, $d_1(D)=3$, $d_3(D)=3$, $d_4(D)=5$, $d_6(D)=9$, and $d_7(D)=3$. The definition is simply saying that label $2$ has to occur in $\phi_7^{(3)}(D)$ in column $3(1-1)-d_2(D)+1=1$, label $5$ in column $3(2-1)-d_5(D)+1=1$, label $1$ in column $3(3-1)-d_1(D)+1=4$, label $3$ in column $3(4-1)-d_3(D)+1=7$, label $4$ in column $3(5-1)-d_4(D)+1=8$, label $6$ in column $3(6-1)-d_6(D)+1=7$, and label $7$ in column $3(7-1)-d_7(D)+1=16$. The diagram of $\phi_7^{(3)}(D)$ is shown in Figure~\ref{fig:gen_exa_imgphi}.
\end{example}
One of the main results of this article is the following theorem. 
\begin{theorem}\label{thm:main_bijection}
	The correspondence $D\mapsto \phi_{n}^{(k)}(D)$ is a well-defined bijection of $\PF_{n,kn}$ into itself such that
	\begin{equation}\label{eq:dinvarea_areapmaj}
		\area(\phi_{n}^{(k)}(D))=\dinv(D)\quad \text{ and }\quad \pmaj(\phi_{n}^{(k)}(D))=\area(D).
	\end{equation}
\end{theorem}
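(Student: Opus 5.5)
The plan follows the structure announced in the introduction: well-definedness and $\area(\phi_n^{(k)}(D))=\dinv(D)$ first, then an explicit inverse $\psi_n^{(k)}$, then $\pmaj(\phi_n^{(k)}(D))=\area(D)$.

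\emph{Well-definedness.} By Remark~\ref{rem:PFnkn} it suffices to show that the function $g(\sigma_i):=k(i-1)-d_{\sigma_i}(D)+1$ takes values in $[kn]$ and satisfies $\#g^{-1}([1+k(i-1)])\geq i$. The second condition is immediate once we know $d_{\sigma_i}(D)\geq 0$, since $g(\sigma_j)\leq 1+k(j-1)\leq 1+k(i-1)$ for all $j\leq i$. For the first, I would bound $d_{\sigma_i}(D)\leq k(i-1)$ by showing that each of the $i-1$ earlier letters $\sigma_j$ ($j<i$) in $\wdiag(D)$ contributes at most $k$ pairs to $d_{\sigma_i}$. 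The geometric picture should give this: the main and upper shadows have height $1$ and slope $1/k$, the $k$ east steps labelled $\sigma_j$ (Remark~\ref{rmk:east_step_labelling}) lie in a band of width $k$ along the diagonal, and the $\tdinv$ and $\dinvcorr$ contributions of a pair should be complementary within that band.

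\emph{Area of the image.} The $i$-th row of $\phi_n^{(k)}(D)$ should contain the label occupying the $i$-th column-ordered position. I would sort labels by $g$, with ties broken by label, and check that the area word entry of the label $\sigma_i$ equals $d_{\sigma_i}(D)$, up to a rearrangement whose sum is $\sum_\lambda d_\lambda(D)=\dinv(D)$. More concretely, $\area$ is $\sum_{\text{rows } r}\bigl(k(r-1)+1-\text{column}(r)\bigr)$. Since the multiset of columns is $\{k(i-1)+1-d_{\sigma_i}\}$ and the sum of $k(r-1)+1$ over all rows equals the sum of $k(i-1)+1$ over all $i$, we get $\area=\sum_i d_{\sigma_i}=\dinv(D)$ directly. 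This is routine.

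\emph{Bijectivity and $\pmaj$.} I would construct $\psi_n^{(k)}$ by running the $\pmaj$ algorithm of Definition~\ref{def:gen_pmaj} on $E=\phi_n^{(k)}(D)$. The word $\sigma_{\pmaj}(E)$ should encode, through its blocks between weak ascents, the diagonal levels of $D$. A label $\lambda$ whose first occurrence falls after $p_\lambda(E)$ weak ascents should have $a_\lambda(D)=p_\lambda(E)$, so that $\pmaj(E)=\sum_\lambda a_\lambda(D)=\area(D)$. Each decreasing run would then correspond to one diagonal of $D$, and the within-run order would recover $\wdiag(D)$ and the relative positions of consecutive diagonals.

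The key lemma is that the $m$-th letter chosen by the algorithm (largest available label below the previous one, otherwise restart at the maximum) reproduces the sweeping order of $D$. Here the $k$ copies of each label in $B_m$ mirror the $k$ east steps labelled $\lambda$ in the east step labelling of $D$. Using this lemma, $\psi_n^{(k)}$ rebuilds $D$ diagonal by diagonal: the rows in area level $a+1$ are attached to those in level $a$ according to the dinv counts $d_\lambda$ read off from the columns of $E$.

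I expect the main obstacle to be this key lemma: proving by induction on $m$ that the greedy choice in step ii), together with the multiplicity-$k$ bookkeeping, produces exactly the intended sequence. This includes showing that $\psi_n^{(k)}(E)$ is always a genuine parking function, meaning no row is placed below the diagonal and columns are increasing. My approach would be to track an invariant relating $B_m'$ to the set of east steps of $D$ not yet ``consumed'' by a sweep of diagonals up to level $a$, following the $k=1$ argument of \cite{DAdderio_Sgubin_Expo} and replacing unit shifts by shifts of $k$.
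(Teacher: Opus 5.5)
Your well-definedness argument and the computation $\area(\phi_n^{(k)}(D))=\dinv(D)$ match the paper: each earlier letter contributes at most one $\tdinv$ pair and at most $k-1$ $\dinvcorr$ pairs, and then the column sum telescopes. The gap is in the third part, which carries all the content of the theorem.

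First, the structural claims you plan to build on are not correct as stated. A run of $\sigma_{\pmaj}(E)$ is strictly decreasing by definition, so its internal order carries no positional information. Moreover, for $k>1$ every label lies in $k$ consecutive runs (Remark~\ref{rem:Rpsigmam}), so runs are not diagonals of $D$. Concretely, take $D$ in Figure~\ref{fig:labelled_Dyck} and $E=\phi_7^{(3)}(D)$ in Figure~\ref{fig:gen_exa_imgphi}. The word $\sigma_{\pmaj}(E)$ starts with the run $R_0=5\,2$. Yet $\wdiag(D)=2513467$ meets $2$ before $5$, and so does a left-to-right sweep of the lowest diagonal of $D$. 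So the greedy choice does not reproduce a sweeping order of $D$; at best a run agrees with a diagonal level of $D$ as a set.

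Consequently you cannot read off the position $i$ of $\lambda$ in $\wdiag(D)$, and so cannot turn $f_E(\lambda)$ into $d_\lambda(D)$. What the columns of $E$ give is $f_E(\lambda)-1$: the number of diagonal inversions that $\lambda$ avoids with the labels preceding it in $\wdiag(D)$. That order is not yet known at reconstruction time.

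More seriously, ``attach the rows of level $a+1$ according to the dinv counts'' presupposes exactly the facts that make an inverse exist and be unique, and none of them appears in your plan. The paper inserts labels in order of first occurrence in $\sigma_{\pmaj}(E)$ (positions $i_1<\dots<i_n$), on the line $y=\frac1k(x+p_\lambda(E))$, and proves three things.
\begin{enumerate}[label=(\roman*)]
\item There are exactly $u_m$ suitable insertion points, and read from left to right their avoided counts are the consecutive integers $i_m-u_m,\dots,i_m-1$ (Lemma~\ref{lem:cl_avoid_dinv}). This gives uniqueness, i.e.\ injectivity.
\item $f_E(\sigma_{i_m})-1$ always lies in $[i_m-u_m,i_m-1]$ (Lemma~\ref{lem:cl_co_area}). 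This gives existence, i.e.\ $\psi_n^{(k)}(E)$ is a parking function for every $E$.
\item Two labels ordered differently in the insertion order and in $\wdiag(\psi_n^{(k)}(E))$ have equal area, with the smaller one to the left. Hence they form exactly $k$ diagonal inversions, so the avoided counts computed in the two orders coincide.
\end{enumerate}

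Note also that you chose the hard direction. Your key lemma is an invariant of the $\pmaj$ algorithm run on $\phi_n^{(k)}(D)$, whose columns are global dinv counts. The paper instead proves $\phi_n^{(k)}\circ\psi_n^{(k)}=\mathrm{id}$. Since $\area(\psi_n^{(k)}(E))=\pmaj(E)$ holds by construction, $\pmaj(\phi_n^{(k)}(D))=\area(D)$ then follows formally, and no invariant relating $B_m'$ to the east steps of $D$ is needed.
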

For example, Equation~\eqref{eq:dinvarea_areapmaj} is easily checked for $D$ in Figure~\ref{fig:labelled_Dyck} (cf.\ Examples~\ref{ex:area},~\ref{ex:dinv} and~\ref{exa:gen_pmaj}).
\medskip

The proof of Theorem~\ref{thm:main_bijection} will keep us busy until the end of Section~\ref{sec:is_the_inverse}.

First of all we need to show that $\phi_n^{(k)}$ is well defined.
\begin{proposition}
	For every $n,k\in \mathbb{N}$ such that $n,k\geq 1$, $\phi_n^{(k)}$ is a well-defined function from $\PF_{n,kn}$ into itself.
\end{proposition}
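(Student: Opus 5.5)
To show $\phi_n^{(k)}(D)$ is well defined, I have to check that the prescribed function $g(\sigma_i):=k(i-1)-d_{\sigma_i}(D)+1$ satisfies the characterization of Remark~\ref{rem:PFnkn}. That is, I must show $g$ takes values in $[kn]$ and $\#g^{-1}([1+k(i-1)])\geq i$ for every $i\in[n]$. The ``uniqueness'' in Definition~\ref{def:gen_phi} then follows from the bijection $D\mapsto f_D$.

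The key step is the bound
\[
0\leq d_{\sigma_i}(D)\leq k(i-1)\qquad\text{for every } i\in[n],
\]
where $\wdiag(D)=\sigma_1\cdots\sigma_n$. It gives $1\leq g(\sigma_i)\leq k(i-1)+1\leq k(n-1)+1\leq kn$, so $g$ lands in $[kn]$. It also gives the parking condition: for each $i$, the labels $\sigma_1,\dots,\sigma_i$ all satisfy $g(\sigma_j)\leq 1+k(j-1)\leq 1+k(i-1)$, hence $\#g^{-1}([1+k(i-1)])\geq i$. Nonnegativity is trivial, since $d_\lambda$ counts elements of a multiset.

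The main obstacle is the upper bound $d_{\sigma_i}(D)\leq k(i-1)$. I would prove it by showing that, for each fixed earlier label $\sigma_j$ with $j<i$, the multiset $\Dinv(D)$ contains at most $k$ pairs involving both $\sigma_j$ and $\sigma_i$ (in either order). Summing over the $i-1$ earlier labels then gives the bound. I split the pairs by source.

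\textbf{Pairs from $\Tdinv(D)$.} Since $\Tdinv(D)$ is a set and conditions (A) and (B) depend on the order of the pair, the pair $\{\sigma_j,\sigma_i\}$ contributes at most $2$ here. I want a sharper statement, though. Conditions (A) and (B) say that the north endpoint of one step lies in the main or upper shadow of the other step, which lies to its east. Together the two shadows form a band of vertical width $2$ parallel to $y=x/k$. Using the reading order of $\wdiag(D)$ (diagonal first, then left to right), I would check that exactly one orientation of the pair can possibly be a temporary inversion, so there is at most one contribution.

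\textbf{Pairs from $\Dinvcorr(D)$.} These come from east steps labelled $\lambda$ lying entirely inside the main shadow of the north step $\mu$. By Remark~\ref{rmk:east_step_labelling}, each label labels exactly $k$ east steps. The main shadow of the north step $\mu$ has height $1$ and slope $1/k$, so it can entirely contain at most $k$ consecutive east steps at one height; together with the shadow geometry this bounds the corrections between $\lambda$ and $\mu$.

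\textbf{Combining.} The delicate point is to show that $\tdinv$ and $\dinvcorr$ contributions for the same pair do not exceed $k$ in total. I would argue this geometrically. Place the north step $\mu$ in its main shadow and consider the $k$ east steps labelled $\lambda$ together with the endpoint of the north step $\lambda$. They occupy $k+1$ ``slots'' along the diagonal strip. A $\tdinv$ occurs exactly when the north step $\lambda$ sits in the strip, and in that case the first east step following it is not entirely inside the strip (by the half-open convention), so at most $k$ slots count in total.

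I expect this slot-counting, which has to handle the cases $\lambda$ before or after $\mu$ in $\wdiag(D)$ and the (A)/(B) orientations, to be the main technical work.
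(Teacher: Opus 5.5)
Your reduction is the paper's. Everything follows from $0\le d_{\sigma_i}(D)\le k(i-1)$, which you correctly reduce to a bound of $k$ on the elements of $\Dinv(D)$ involving a fixed pair $\{\sigma_j,\sigma_i\}$. Your deduction of the range and of the parking condition from that bound is fine.

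\textbf{The gap.} That per-pair bound is the entire content of the proof, and you do not establish it; the mechanism you sketch is partly wrong.
\begin{itemize}
\item Your $\Dinvcorr$ paragraph yields at best a bound of $k$, and it rests on the wrong picture. The $k$ east steps labelled $\lambda$ generally lie at different heights: in Figure~\ref{fig:gen_dinvcompA} those labelled $6$ lie at heights $3$ and $4$. So counting consecutive east steps at one height inside the shadow bounds nothing.
\item This pushes you into the slot argument, whose stated reason only fits condition (A). Under condition (B) ($\lambda>\mu$) the endpoint of $\lambda$ lies in the \emph{upper} shadow of $\mu$, not in the main-shadow strip. The east step that escapes the strip is then the topmost one, sticking out above it, and the half-open convention plays no role.
\item A smaller point: only one orientation can be a temporary inversion simply because (A) and (B) both demand $f_D(i)<f_D(j)$. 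The reading order of $\wdiag(D)$ is irrelevant.
\end{itemize}

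\textbf{The missing observation} makes any combining unnecessary. Use the diagonal coordinate $y-x/k$.
\begin{itemize}
\item The north step $\lambda$ spans $[\delta_\lambda,\delta_\lambda+1]$ with $\delta_\lambda=a_\lambda(D)/k$.
\item By construction of the east step labelling, its $k$ east steps span the consecutive intervals $[\delta_\lambda+t/k,\delta_\lambda+(t+1)/k]$, $t=0,\dots,k-1$. Their union is this whole closed interval of length $1$.
\item The main shadow of $\mu$ occupies the half-open interval $(\delta_\mu,\delta_\mu+1]$, also of length $1$, so it cannot contain all $k$ of them. Hence every pair occurs at most $k-1$ times in $\Dinvcorr(D)$, regardless of $\tdinv$.
\item A pair $(\lambda,\mu)$ in $\Dinvcorr(D)$ forces $f_D(\lambda)<f_D(\mu)$, just as membership in $\Tdinv(D)$ does. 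Indeed, the east steps labelled $\lambda$ lie east of $\lambda$, and the main shadow of $\mu$ lies west of $\mu$.
\item So only one orientation of each unordered pair ever occurs, and the total is at most $1+(k-1)=k$.
\end{itemize}
This is the paper's one-line argument (``at most once in $\Tdinv(D)$ and at most $k-1$ times in $\Dinvcorr(D)$''). With it, your proof is complete.
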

\begin{proof}
	Let $\wdiag(D)=\sigma_1\sigma_2\dots \sigma_n$. Observe that by definition any pair $(i,j)$ occurs at most once in $\Tdinv(D)$ and at most $k-1$ times in $\Dinvcorr(D)$. Hence, for every $i\in [n]$ there are at most $k(i-1)$ pairs of the form $(\sigma_j,\sigma_i)$ with $j<i$ occurring in $\Dinv(D) = \Tdinv(D)+\Dinvcorr(D)$. Therefore
	\[
		0 \leq d_{\sigma_i}(D) \leq k(i-1),
	\]
	which implies
	\[
		1 \leq f_{\phi_n^{(k)}(D)}(\sigma_i) \leq 1+k(i-1).
	\]
	We deduce that
	\[
		\{\sigma_1,\sigma_2,\dots,\sigma_i\}\subseteq f_{\phi_n^{(k)}(D)}^{-1}([1+k(i-1)]),
	\]
	therefore
	\[
		\# f_{\phi_n^{(k)}(D)}^{-1}([1+k(i-1)])\geq i.
	\]
	By Remark~\ref{rem:PFnkn} we conclude that $\phi_n^{(k)}(D)\in \PF_{n,kn}$ exists and it is unique.
\end{proof}

There is another property that is easy to deduce from our definition of $\phi_n^{(k)}$.

\begin{proposition}
	For every $D \in \PF_{n,kn}$ we have
	\begin{equation}\label{eq:cl_phi_dinv-area}
		\dinv(D) = \area(\phi_n^{(k)}(D)).
	\end{equation}
\end{proposition}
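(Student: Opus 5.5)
The identity should follow from a formula for the area of a parking function in terms of the columns of its labels, combined with the definition of $\phi_n^{(k)}$.

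The first step is the area formula. Take $E\in\PF_{n,kn}$ and suppose the $j$-th row of $E$ carries label $\lambda$, so its north step lies on the line $x=f_E(\lambda)-1$, at height from $j-1$ to $j$. The diagonal $y=x/k$ crosses the bottom edge of that row at $x=k(j-1)$. Hence the number of whole unit squares between the path and the diagonal in row $j$ is
\[
w_j(E)=k(j-1)-(f_E(\lambda)-1).
\]
This needs a quick check against Definition~\ref{def:gen_area} and the example: in Figure~\ref{fig:labelled_Dyck}, row $2$ has label $4$ with $f_D(4)=1$, and the formula gives $3\cdot1-0=3$, which is correct. A second check, row $5$ with label $1$ and $f_D(1)=12$, gives $12-11=1$, also correct. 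Summing over rows, and noting that the labels read bottom to top occupy rows $1,\dots,n$, we get
\[
\area(E)=\sum_{j=1}^n k(j-1)-\sum_{\lambda\in[n]}\bigl(f_E(\lambda)-1\bigr)=k\binom{n}{2}-\sum_{\lambda}\bigl(f_E(\lambda)-1\bigr).
\]
The key point is that this expression depends only on the multiset of columns $\{f_E(\lambda)\}$, not on which row each label sits in.

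The second step is to apply this to $E=\phi_n^{(k)}(D)$, where $E$ is well defined by the preceding proposition. By Definition~\ref{def:gen_phi}, $f_E(\sigma_i)-1=k(i-1)-d_{\sigma_i}(D)$. Summing over $i$ gives
\[
\sum_\lambda\bigl(f_E(\lambda)-1\bigr)=k\binom n2-\sum_\lambda d_\lambda(D)=k\binom n2-\dinv(D),
\]
using $\sum_\lambda d_\lambda(D)=\dinv(D)$. Substituting into the area formula yields $\area(E)=\dinv(D)$.

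The only delicate point is the row-by-row area formula, in particular the convention that squares cut by the diagonal are not counted. It works because the north step in row $j$ sits at $x\le k(j-1)$, by the Dyck condition, and exactly the full squares between $x=f-1$ and $x=k(j-1)$ lie strictly above the diagonal in that row.
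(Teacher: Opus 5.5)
Your proof is correct and follows essentially the same route as the paper. The paper also writes $\area(\phi_n^{(k)}(D))$ as $k\binom{n}{2}$ minus the total number $\sum_i \bigl(f_{\phi_n^{(k)}(D)}(\sigma_i)-1\bigr)$ of unit squares lying to the left of the north steps, and then substitutes Definition~\ref{def:gen_phi}; your row-by-row formula $w_j(E)=k(j-1)-(f_E(\lambda)-1)$ simply spells out the counting that the paper states in one line.
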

\begin{proof}
	Given a parking function $D \in \PF_{n,kn}$, let $\wdiag(D)=\sigma_1\sigma_2\dots\sigma_n$.
	
	Observe that, for every $D\in \LDyck_{n,kn}$, $f_D(\lambda)$ is the number of the column of the diagram of $D$ containing the label $\lambda\in [n]$, we have
	\begin{equation}\label{eq:cl_intepr_phif}
		f_D(\lambda) = \#\left\{\begin{array}{c}
			\text{unit squares to the left of}\\
			\text{the north step labelled }\lambda
		\end{array}\right\} + 1 \qquad \text{for all }\lambda \in [n].
	\end{equation}
	The number of all the whole unit squares above the main diagonal in a diagram is $k\binom{n}{2}$, therefore by definition of the area statistic
	\begin{align*}
		\area(\phi_n^{(k)}(D)) &= k\binom{n}{2} - \sum_{i=1}^{n}\big(f_{\phi_n^{(k)}(D)}(\sigma_i)-1\big) = \sum_{i=1}^{n}k(i-1) - \sum_{i=1}^{n}(k(i-1) - d_{\sigma_i}(D))\\
		&= \sum_{i=1}^{n}d_{\sigma_i}(D) = \#\Dinv(D) = \dinv(D).
	\end{align*}
	This concludes the proof.
\end{proof}

\section{Definition of the inverse map $\psi_n^{(k)}$}\label{sec:inverse_psi_nk}

In this section we construct a map $\psi_n^{(k)}: \PF_{n,kn} \to \PF_{n,kn}$, whose definition is iterative, hence much more involved than the one of $\phi_n^{(k)}$. In later sections we will prove that $\psi_n^{(k)}$ is the inverse of $\phi_n^{(k)}$, and as a corollary we will deduce that $\area(D) = \pmaj(\phi_n^{(k)}(D))$ for all $D \in \PF_{n,kn}$. 

Since it is quite involved, we will illustrate it via an example.

\begin{example}\label{exa:cl_constr_psi}
	Consider the parking function $g \in \PF_{7,21}$ in Figure~\ref{fig:gen_exa_imgpsi}.
	First of all, we compute the word $\sigma_{\pmaj}(D)$, and for each label $\lambda$ we record its $\pmaj$ contribution $p_\lambda:=p_\lambda(D)$, together with its \emph{coarea contribution} $c_\lambda=c_\lambda(D):=f_D(\lambda)-1$, i.e.\ the number of squares to the left of $\lambda$ in the diagram of $D$: cf.\ the table on the right in Figure~\ref{fig:gen_exa_imgpsi}.
	\begin{figure}[ht]
		\centering
		\begin{tikzpicture}
			\node[scale=.8] at (0,0) {
				\begin{tikzpicture}
					\parkfunc{7}{3}{21,21,18,15,15,14,6}{2,5,1,3,6,4,7}{.7}
					\node[green!40!black, right] at (15,-.5) {$\sigma_{\pmaj}(D)$};
					\node[green!40!black] at ( 0.35,-.5) {5};
					\node[green!40!black] at ( 1.05,-.5) {2};
					\node[green!40!black] at ( 1.75,-.5) {5};
					\node[green!40!black] at ( 2.45,-.5) {2};
					\node[green!40!black] at ( 3.15,-.5) {1};
					\node[green!40!black] at ( 3.85,-.5) {5};
					\node[green!40!black] at ( 4.55,-.5) {3};
					\node[green!40!black] at ( 5.25,-.5) {2};
					\node[green!40!black] at ( 5.95,-.5) {1};
					\node[green!40!black] at ( 6.65,-.5) {6};
					\node[green!40!black] at ( 7.35,-.5) {4};
					\node[green!40!black] at ( 8.05,-.5) {3};
					\node[green!40!black] at ( 8.75,-.5) {1};
					\node[green!40!black] at ( 9.45,-.5) {6};
					\node[green!40!black] at (10.15,-.5) {4};
					\node[green!40!black] at (10.85,-.5) {3};
					\node[green!40!black] at (11.55,-.5) {7};
					\node[green!40!black] at (12.25,-.5) {6};
					\node[green!40!black] at (12.95,-.5) {4};
					\node[green!40!black] at (13.65,-.5) {7};
					\node[green!40!black] at (14.35,-.5) {7};
					\draw[very thick,blue] ( 1.4,-.2) -- ( 1.4,-.8);
					\draw[very thick,blue] ( 3.5,-.2) -- ( 3.5,-.8);
					\draw[very thick,blue] ( 6.3,-.2) -- ( 6.3,-.8);
					\draw[very thick,blue] ( 9.1,-.2) -- ( 9.1,-.8);
					\draw[very thick,blue] (11.2,-.2) -- (11.2,-.8);
					\draw[very thick,blue] (13.3,-.2) -- (13.3,-.8);
					\draw[very thick,blue] (14.0,-.2) -- (14.0,-.8);
					\node[blue, right] at (15,-1.2) {$p_{\lambda}(D)$};
					\node[blue] at ( 0.35,-1.2) {0};
					\node[blue] at ( 1.05,-1.2) {0};
					\node[blue] at ( 3.15,-1.2) {1};
					\node[blue] at ( 4.55,-1.2) {2};
					\node[blue] at ( 6.65,-1.2) {3};
					\node[blue] at ( 7.35,-1.2) {3};
					\node[blue] at (11.55,-1.2) {5};
				\end{tikzpicture}
			};
		\node[right, scale=.8] at (6,1) {
			\renewcommand{\arraystretch}{1.4}
			\begin{tabular}{r|c|c}
				$\lambda$ & \textcolor{blue}{$p_\lambda(D)$} & \textcolor{red}{$c_\lambda(D)$} \\ \hline
				5 & \textcolor{blue}{0} & \textcolor{red}{0} \\ \hline
				2 & \textcolor{blue}{0} & \textcolor{red}{0} \\ \hline
				1 & \textcolor{blue}{1} & \textcolor{red}{3} \\ \hline
				3 & \textcolor{blue}{2} & \textcolor{red}{6} \\ \hline
				6 & \textcolor{blue}{3} & \textcolor{red}{6} \\ \hline
				4 & \textcolor{blue}{3} & \textcolor{red}{7} \\ \hline
				7 & \textcolor{blue}{5} &  \textcolor{red}{15}
			\end{tabular}
		};
		\end{tikzpicture}
		\caption{The $(7,21)$-parking function considered in Example~\ref{exa:gen_pmaj}, with its $\pmaj$ contributions and its co-area contributions.}\label{fig:gen_exa_imgpsi}
	\end{figure}\\
	The idea of the construction is to build up the labelled Dyck path $\psi_7^{(3)}(D)$ iteratively, by inserting the labels $[7]$ in the order they first occur in $\sigma_{\pmaj}(D) = \sigma_1\sigma_2 \dots \sigma_{21}$ reading it from left to right: say $i_1<i_2<\dots <i_7$ are the positions in $\sigma_{\pmaj}(D)$ of these occurrences.\\
	We start with the empty $0\times 0$ labelled Dyck path. For every $m = 1,2,\dots,7$, we will choose the end point of a step in the $(m-1)\times k(m-1)$ labelled Dyck path obtained at the previous iterative step, where we will insert a north step labelled $\sigma_{i_m}$ immediately followed by $k$ east steps, in such a way that
	\begin{itemize}
		\item we obtain a $m\times k m$ labelled Dyck path,
		\item the row-area contribution of $\sigma_{i_m}$ in $\psi_7^{(3)}(D)$ is equal to $p_{\sigma_{i_m}}(D)$, and
		\item the label $\sigma_{i_m}$ ``avoids'' to create exactly $c_{\sigma_{i_m}}(D)$ diagonal inversions; in other words, in the new $m\times km$ labelled Dyck path the label $\sigma_{i_m}$ does not create a diagonal inversion with exactly $c_{\sigma_{i_m}}(D)$ many labels (counted with multiplicities).
	\end{itemize}
	\medskip 
	In our example, this iterative construction consists of $7$ iterative steps, shown in Figure~\ref{fig:gen_exa_psi2}:
	 \begin{itemize}
	 	\item \underline{Step 1}. Just construct the $1\times 3$ labelled Dyck path where the only north step is labelled $\sigma_{i_1} =\sigma_1= 5$.
	 	\item \underline{Step 2}. Since label $\sigma_{i_2} =\sigma_2 = 2$ has $\pmaj$ contribution $p_2(D) = 0$, the corresponding north step in the labelled path will have $0$ area contribution. Hence, we consider the main diagonal, highlighted with a dotted line in the picture. Intersecting this diagonal with the labelled path we get $2$ intersection points shown in blue. Both are possible spots in which to add the label $\sigma_2 = 2$, but they would ``avoid'' a different number of diagonal inversions: in the figure, the number of ``avoided'' diagonal inversions is denoted in green (and it is computed by the formula $k(m-1)-d$ where $d$ is the blue sum ``{\color{blue}$a + b$}'' next to it: $\color{blue}a$ is the number of $\tdinv$ pairs the insertion in that spot would create, while $\color{blue}b$ is the number of $\dinvcorr$ pairs).\\ 	
	 	Since $c_2 = 0$, at the blue point labelled $0$ we insert a north step labelled $\sigma_2 = 2$ followed by $k=3$ east step. In this way we obtain the second labelled Dyck path in Figure~\ref{fig:gen_exa_psi2}.
		\item \underline{Steps $3,4$ and $5$}. These steps follow the same rule from Step 2, the only difference being that, instead of the main diagonal, since $\sigma_{i_3}=\sigma_5=1$, $\sigma_{i_4}=\sigma_7=3$, $\sigma_{i_5}=\sigma_{10}=6$, and $p_1(D)=1$, $p_3(D) = 2$, $p_6(D)=3$, we need to intersect the path with the lines $y = \frac{1}{3}(x + p_{1}(D))=\frac{1}{3}(x + 1)$, $y = \frac{1}{3}(x + p_{3}(D))=\frac{1}{3}(x + 2)$ and $y = \frac{1}{3}(x + p_{6}(D))=\frac{1}{3}(x + 3)$ respectively, and we need to pick the spots that avoid $c_1(D)=3$, $c_3(D)=6$ and $c_6(D)=6$ diagonal inversions, respectively.  Observe that at every step, reading the possible insertion spots from left to right, the ``avoided'' dinv increases by $1$ at each spot: this is an important property that we will prove in general.
		\item \underline{Step 6}. Now $\sigma_{i_6}=\sigma_{11} = 4$, $p_4(D)=3$ and $c_4(D)=7$. But observe that intersecting the path with the line $y = \frac{1}{3}(x + p_{4}(D))=\frac{1}{3}(x + 3)$ we get $5$ intersections, but one of them, denoted with a blue $\times$, is not available, since inserting a vertical step labelled $4$ right above the label $5$ would violate the condition of being a labelled Dyck path. So those intersection points should be ignored, and we have only $4$ possible spots. Of course we insert at the spot ``avoiding'' $7$ diagonal inversions.
		\item \underline{Step 7}. This is analogous to the previous one, leading to the labelled Dyck path appearing in Figure~\ref{fig:labelled_Dyck}: this will be our $\psi_7^{(3)}(D)$. 
	\end{itemize}
	
\begin{figure}[htbp!]
	\centering
	\resizebox{.9\width}{.9\height}{
		\begin{tikzpicture}
			\pgfmathsetmacro{\scaleDINV}{1}
			\node[scale=.6] at (-5.3,0.5) {\begin{tikzpicture}
					\parkfunc{1}{3}{3}{5}{1}
					\draw[blue, dotted, thick] (0,0) -- (3,1);
					\coordinate (C) at (0,0);
					\draw[blue, very thick, ->] ($(C) + (.3333,-1)$) -- ($(C) + (.1111,-.3333)$);
					\node[scale=2, blue] at (0,0) {\textbullet};
					\node[scale=\scaleDINV, above left] at (0,0) {$\begin{array}{c}{\color{green!60!black}0}\\ {\color{blue}1+2}\end{array}$};
					\node[scale=2, blue] at (3,1) {\textbullet};
					\node[scale=\scaleDINV, above left] at (3,1) {$\begin{array}{c}{\color{green!60!black}1}\\ {\color{blue}0+2}\end{array}$};
			\end{tikzpicture}};
			\node[scale=.6] at (-1.8,0.5) {\begin{tikzpicture}
					\parkfunc{2}{3}{6,3}{2,5}{1}
					\coordinate (C) at (2,1);
					\draw[blue, very thick, ->] ($(C) + (.3333,-1)$) -- ($(C) + (.1111,-.3333)$);
					\draw[blue, dotted, thick] (0,0.3333) -- (5,2);
					\node[scale=2, blue] at (2,1) {\textbullet};
					\node[scale=\scaleDINV, above left] at (2,1) {$\begin{array}{c}{\color{green!60!black}3}\\ {\color{blue}0+3}\end{array}$};
					\node[scale=2, blue] at (5,2) {\textbullet};
					\node[scale=\scaleDINV, above left] at (5,2) {$\begin{array}{c}{\color{green!60!black}4}\\ {\color{blue}0+2}\end{array}$};
			\end{tikzpicture}};
			\node[scale=.6] at (3.1,0.5) {\begin{tikzpicture}
					\parkfunc{3}{3}{9,7,3}{2,1,5}{1}
					\coordinate (C) at (7,3);
					\draw[blue, very thick, ->] ($(C) + (.3333,-1)$) -- ($(C) + (.1111,-.3333)$);
					\draw[blue, dotted, thick] (0,0.6666) -- (7,3);
					\node[scale=2, blue] at (1,1) {\textbullet};
					\node[scale=\scaleDINV, above left] at (1,1) {$\begin{array}{c}{\color{green!60!black}4}\\ {\color{blue}2+3}\end{array}$};
					\node[scale=2, blue] at (4,2) {\textbullet};
					\node[scale=\scaleDINV, above left] at (4,2) {$\begin{array}{c}{\color{green!60!black}5}\\ {\color{blue}2+2}\end{array}$};
					\node[scale=2, blue] at (7,3) {\textbullet};
					\node[scale=\scaleDINV, above left] at (7,3) {$\begin{array}{c}{\color{green!60!black}6}\\ {\color{blue}2+1}\end{array}$};
			\end{tikzpicture}};
			\node[scale=.6] at (0,-2.5) {\begin{tikzpicture}
					\parkfunc{4}{3}{12,10,6,5}{2,1,5,3}{1}
					\coordinate (C) at (0,1);
					\draw[blue, very thick, ->] ($(C) + (.3333,-1)$) -- ($(C) + (.1111,-.3333)$);
					\draw[blue, dotted, thick] (0,1) -- (9,4);
					\node[scale=2, blue] at (0,1) {\textbullet};
					\node[scale=\scaleDINV, above left] at (0,1) {$\begin{array}{c}{\color{green!60!black}6}\\ {\color{blue}3+3}\end{array}$};
					\node[scale=2, blue] at (3,2) {\textbullet};
					\node[scale=\scaleDINV, above left] at (3,2) {$\begin{array}{c}{\color{green!60!black}7}\\ {\color{blue}3+2}\end{array}$};
					\node[scale=2, blue] at (6,3) {\textbullet};
					\node[scale=\scaleDINV, above left] at (6,3) {$\begin{array}{c}{\color{green!60!black}8}\\ {\color{blue}2+2}\end{array}$};
					\node[scale=2, blue] at (9,4) {\textbullet};
					\node[scale=\scaleDINV, above left] at (9,4) {$\begin{array}{c}{\color{green!60!black}9}\\ {\color{blue}2+1}\end{array}$};
			\end{tikzpicture}};
			\node[scale=.6] at (0,-6.1) {\begin{tikzpicture}
					\parkfunc{5}{3}{15,15,10,6,5}{2,6,1,5,3}{1}
					\coordinate (C) at (0,1);
					\draw[blue, very thick, ->] ($(C) + (.3333,-1)$) -- ($(C) + (.1111,-.3333)$);
					\draw[blue, dotted, thick] (0,1) -- (12,5);
					\node[scale=2, blue] at (0,1) {\textbullet};
					\node[scale=\scaleDINV, above left] at (0,1) {$\begin{array}{c}{\color{green!60!black}7}\\ {\color{blue}3+5}\end{array}$};
					\node[scale=2, blue] at (3,2) {\textbullet};
					\node[scale=\scaleDINV, above left] at (3,2) {$\begin{array}{c}{\color{green!60!black}8}\\ {\color{blue}2+5}\end{array}$};
					\node[scale=2, blue] at (6,3) {\textbullet};
					\node[scale=\scaleDINV, above left] at (6,3) {$\begin{array}{c}{\color{green!60!black}9}\\ {\color{blue}2+4}\end{array}$};
					\node[scale=2, blue] at (9,4) {$\times$};
					\node[scale=2, blue] at (12,5) {\textbullet};
					\node[scale=\scaleDINV, above left] at (12,5) {$\begin{array}{c}{\color{green!60!black}10}\\ {\color{blue}2+3}\end{array}$};
			\end{tikzpicture}};
			\node[scale=.6] at (0,-10.4) {\begin{tikzpicture}
					\parkfunc{6}{3}{18,18,15,10,6,5}{2,4,6,1,5,3}{1}
					\coordinate (C) at (4,3);
					\draw[blue, very thick, ->] ($(C) + (.3333,-1)$) -- ($(C) + (.1111,-.3333)$);
					\draw[blue, dotted, thick] (0,1.6666) -- (13,6);
					\node[scale=2, blue] at (1,2) {\textbullet};
					\node[scale=\scaleDINV, above left] at (1,2) {$\begin{array}{c}{\color{green!60!black}14}\\ {\color{blue}2+2}\end{array}$};
					\node[scale=2, blue] at (4,3) {\textbullet};
					\node[scale=\scaleDINV, above left] at (4,3) {$\begin{array}{c}{\color{green!60!black}15}\\ {\color{blue}2+1}\end{array}$};
					\node[scale=2, blue] at (13,6) {\textbullet};
					\node[scale=\scaleDINV, above left] at (13,6) {$\begin{array}{c}{\color{green!60!black}16}\\ {\color{blue}2+0}\end{array}$};
			\end{tikzpicture}};
	\end{tikzpicture}}
	\caption{The step-by-step construction of $\psi_{7}^{(3)}(D)$ for $D$ in Figure~\ref{fig:gen_exa_imgpsi}. The final step leads to the $(7,21)$-parking function appearing in Figure~\ref{fig:labelled_Dyck}.}\label{fig:gen_exa_psi2}
\end{figure}
\end{example}

\section{$\psi_n^{(k)}$ is well defined}\label{sec:psi_welldefined}

First of all we need to show that $\psi_n^{(k)}:\PF_{n,kn}\to \PF_{n,kn}$ is a well-defined function.

\smallskip

Consider a $(n,kn)$-parking function $D \in \PF_{n,kn}$ and compute $\sigma_{\pmaj}(D) = \sigma_1\sigma_2 \dots\sigma_{kn}$, and let $i_1<i_2<\dots <i_n$ be the positions in $\sigma_{\pmaj}(D)$ of the first occurrences of the elements of $[n]$ when reading $\sigma_{\pmaj}(D)$ from left to right. Recall that for each label $\lambda \in [n]$, $c_\lambda=c_\lambda(D) := f_D(\lambda) - 1$ denotes the co-area contribution of the row with north step labelled $\lambda$ in $D$, and $p_\lambda = p_\lambda(D)$ denotes the pmaj contribution of the label $\lambda$ in $D$.

We want to perform our iterative procedure as in Example~\ref{exa:cl_constr_psi}, hence, starting with the empty labelled Dyck path, we will perform $n$ insertion Steps. At Step $m$, we will look at the intersection of the diagonal $y=\frac{1}{k}(x+p_{\sigma_m}(D))$ with the $(m-1)\times k(m-1)$ labelled Dyck path obtained at Step $m-1$, and insert a north step labelled $\sigma_{i_m}$ immediately followed by $k$ east steps in a \emph{suitable point} of that diagonal, i.e.\ an end point of a step of the $(m-1)\times k(m-1)$ labelled Dyck path such that the insertion performed in that point gives a $m\times km$ labelled Dyck path. Notice that in this way its row-area contribution equals $p_{\sigma_{i_m}}(D)$. Moreover, we want to insert $\sigma_{i_m}$ in such a way that it avoids exactly $c_{\sigma_{i_m}}(D)$ diagonal inversions.

In order to show that $\psi_n^{(k)}$ is well defined we need the following lemma.

\begin{lemma}\label{lem:well_def}
	For every $m=1,2,\dots,n$, at Step $m$ there exists a unique suitable insertion point on which the insertion of label $\sigma_{i_m}$ avoids exactly $c_{\sigma_{i_m}}$ diagonal inversions.
\end{lemma}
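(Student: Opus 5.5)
We argue by induction on $m$, proving two facts at the same time. First, at Step $m$ the suitable insertion points on the diagonal $y=\frac1k(x+p_{\sigma_{i_m}})$, read from left to right, avoid consecutive numbers of diagonal inversions: if there are $s$ of them, the avoided counts are $a,a+1,\dots,a+s-1$ for some $a$. Second, $c_{\sigma_{i_m}}$ lies in this interval. Together these give existence and uniqueness.

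Fix Step $m$ and let $P$ be the current $(m-1)\times k(m-1)$ labelled Dyck path. Inserting $\lambda=\sigma_{i_m}$ at a suitable point creates $\tdinv$ pairs and $\dinvcorr$ pairs with the labels already present. The label $\lambda$ is inserted after all previous labels in the sense of the diagonal reading word, since its diagonal level $p_\lambda$ together with the ordering of first occurrences in $\sigma_{\pmaj}$ should place it last among the current labels. So, exactly as in the proof that $\phi_n^{(k)}$ is well defined, the created pairs number $d\le k(m-1)$, and the avoided count is $k(m-1)-d$.

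\textbf{Step 1: local monotonicity.} Take two consecutive suitable points $Q<Q'$ on the diagonal, and compare the pairs created at $Q$ and at $Q'$. Moving from $Q$ to $Q'$ along the diagonal, the main shadow and upper shadow of the new step pick up or drop exactly the north steps and east steps of $P$ lying between $Q$ and $Q'$. Using the east step labelling (each label owns exactly $k$ east steps, Remark~\ref{rmk:east_step_labelling}) and conditions~\ref{it:gen_dinv_condA}--\ref{it:gen_dinv_condB}, we check that the portion of $P$ between $Q$ and $Q'$ trades $\tdinv$ pairs against $\dinvcorr$ pairs with net change exactly $-1$. This is the pattern visible in Figure~\ref{fig:gen_exa_psi2}, e.g.\ $2+3\to2+2\to2+1$. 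The forbidden points, which would put $\lambda$ directly above a larger label in the same column, have to be handled separately. Skipping such a point should contribute net change $0$, because the label above which we may not insert is compared with $\lambda$ in a way that exactly cancels. I expect this case analysis, especially the interaction with the half-open shadows and the forbidden points, to be the main obstacle. I would first treat the case in which the segment of $P$ between $Q$ and $Q'$ is a single north step followed by $k$ east steps, then reduce the general case to it by decomposing the segment.

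\textbf{Step 2: the range.} Next we identify the endpoints of the interval $[a,a+s-1]$ in terms of the pmaj algorithm. The number $s$ of suitable points should correspond to the size of the multiset $B$ at the moment $\lambda$ first appears in $\sigma_{\pmaj}(D)$. The leftmost avoided count $a$ should correspond to the number of columns before the one where $\lambda$'s block is added. To see this, we run the construction of $\sigma_{\pmaj}(D)$ in parallel with the insertions and show that the ascent and descent structure up to position $i_m$ encodes the positions of the previous labels relative to the line $y=\frac1k(x+p_\lambda)$. The inequality $a\le c_\lambda\le a+s-1$ then reduces to the facts that $\lambda$ enters $B$ at step $c_\lambda+1$ and is first selected at position $i_m$. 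Since this step is the least certain, I would verify it first against the running example before attempting a general argument.
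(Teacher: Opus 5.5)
Your skeleton (consecutive avoided counts along the suitable points, plus a range that contains $c_{\sigma_{i_m}}$) matches the paper's. Your Step 1 is the same local case analysis that the paper also only sketches.

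\textbf{The gap is in Step 2, and both identifications you propose there are false.} In the running example, take Step 4 (inserting $\lambda=3$, with $i_4=7$). The multiset $B_7=\{\!\!\{1^2,2,3^3,6^3\}\!\!\}$ has $9$ elements ($4$ distinct values), but there are only $3$ suitable points. The avoided counts are $4,5,6$ while $c_3=6$, so $a\neq c_\lambda$: here the rightmost point is chosen, whereas at Step 3 the leftmost one is. If $a=c_\lambda$ held, your inequality $a\le c_\lambda\le a+s-1$ would be vacuous, and it would say that one always inserts at the leftmost point. So this step cannot be repaired by checking examples; you need different quantities.

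A smaller point: $\lambda$ need not be last among the current labels in the diagonal reading word. Label $4$, inserted at Step 6, precedes label $6$, inserted at Step 5, in $2513467$. This is harmless here, because ``avoided'' is measured only against previously inserted labels, but you should not rely on it.

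\textbf{What is missing is an exact anchor for the interval.}

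First, count the suitable points. The paper does this via the runs $R_{-1}=0,R_0,R_1,\dots$ of $\sigma_{\pmaj}(D)$, using $\nu\in R_h\iff 0\le h-p_\nu<k$. At level $h=p_\lambda$ the suitable points are: the first point at level $h$ after each earlier north step of area in $[h-k+1,h]$; the top of each earlier north step of area $h-k$ whose label is smaller than $\lambda$; and the origin if $h=0$. These number exactly
\[
u_m=\#\{\nu\in R_h:\nu>\lambda\}+\#\{\nu\in R_{h-1}:\nu\le\lambda\}.
\]

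Second, and this is the key computation, evaluate one endpoint exactly. At the \emph{rightmost} suitable point, each earlier label $\sigma_{i_j}$ forms exactly $k-\gamma_m(\sigma_{i_j})$ diagonal inversions with $\lambda$. Here $\gamma_m(\sigma_{i_j})$ is the number of occurrences of $\sigma_{i_j}$ to the left of position $i_m$ in $\sigma_{\pmaj}(D)$. Since these occurrences fill positions $1,\dots,i_m-1$, that point avoids exactly $i_m-1$ inversions. Your Step 1 then gives the avoided counts $i_m-u_m,\dots,i_m-1$ from left to right.

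Third, show that $i_m-u_m+1\le f_D(\lambda)\le i_m$. The upper bound is your remark that $\lambda$ cannot be selected before it enters $B$. The lower bound is a separate fact about the runs: if $\lambda$ were in $B$ by step $i_m-u_m$, it would have been selected within $R_{h-1}$, forcing $p_\lambda<h$.
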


The rest of this section is dedicated to the proof of this fundamental result. We need a few definitions.

Given $D \in \PF_{n,kn}$, consider the permutation $\sigma_{\pmaj}(D) = \sigma_1\sigma_2
\sigma_{kn}$, and insert a bar at the weakly ascent positions of this word (i.e.\ between $\sigma_{j}$ and $\sigma_{j+1}$ if $\sigma_j\leq \sigma_{j+1}$). The subwords separated by the bars are called \emph{runs}, and we denote them $R_0,R_1,R_2,\dots$ in increasing order from left to right. 
\begin{remark}\label{rem:Rpsigmam}
	Observe that, by construction (see Definition~\ref{def:gen_pmaj}), for every $\lambda\in [n]$, the run $R_{p_\lambda(D)}$ is the leftmost one for which $\lambda\in R_{p_\lambda(D)}$. In other words, in the notation above, $\sigma_{i_m}\in R_{p_{\sigma_{i_m}}(D)}$ for every $m\in [n]$. Moreover, $\lambda\in R_h$ if and only if $0\leq h-p_\lambda(D)<k$.
\end{remark}
\begin{example}\label{ex:sigmamajDruns}
	For the labelled Dyck path $D$ in Example~\ref{exa:cl_constr_psi}, we reproduce here the word $\sigma_{\pmaj}(D)$, that appears right below the diagram of $D$ in Figure~\ref{fig:gen_exa_imgpsi}: 
	\begin{figure}[ht]
		\centering
		\begin{tikzpicture}
			\node[scale=.9] at (0,0) {\begin{tikzpicture}
				\node[green!40!black] at ( -1.0,-.5) {$\sigma_{\pmaj}(D)$};
				\node[green!40!black] at ( 0.35,-.5) {5};
				\node[green!40!black] at ( 1.05,-.5) {2};
				\node[green!40!black] at ( 1.75,-.5) {5};
				\node[green!40!black] at ( 2.45,-.5) {2};
				\node[green!40!black] at ( 3.15,-.5) {1};
				\node[green!40!black] at ( 3.85,-.5) {5};
				\node[green!40!black] at ( 4.55,-.5) {3};
				\node[green!40!black] at ( 5.25,-.5) {2};
				\node[green!40!black] at ( 5.95,-.5) {1};
				\node[green!40!black] at ( 6.65,-.5) {6};
				\node[green!40!black] at ( 7.35,-.5) {4};
				\node[green!40!black] at ( 8.05,-.5) {3};
				\node[green!40!black] at ( 8.75,-.5) {1};
				\node[green!40!black] at ( 9.45,-.5) {6};
				\node[green!40!black] at (10.15,-.5) {4};
				\node[green!40!black] at (10.85,-.5) {3};
				\node[green!40!black] at (11.55,-.5) {7};
				\node[green!40!black] at (12.25,-.5) {6};
				\node[green!40!black] at (12.95,-.5) {4};
				\node[green!40!black] at (13.65,-.5) {7};
				\node[green!40!black] at (14.35,-.5) {7};
				\draw[very thick,blue] ( 1.4,-.2) -- ( 1.4,-.8);
				\draw[very thick,blue] ( 3.5,-.2) -- ( 3.5,-.8);
				\draw[very thick,blue] ( 6.3,-.2) -- ( 6.3,-.8);
				\draw[very thick,blue] ( 9.1,-.2) -- ( 9.1,-.8);
				\draw[very thick,blue] (11.2,-.2) -- (11.2,-.8);
				\draw[very thick,blue] (13.3,-.2) -- (13.3,-.8);
				\draw[very thick,blue] (14.0,-.2) -- (14.0,-.8);
				\node[blue] at (-1.0,-1.2) {$p_{\lambda}(D)$};
				\node[blue] at ( 0.35,-1.2) {0};
				\node[blue] at ( 1.05,-1.2) {0};
				\node[blue] at ( 3.15,-1.2) {1};
				\node[blue] at ( 4.55,-1.2) {2};
				\node[blue] at ( 6.65,-1.2) {3};
				\node[blue] at ( 7.35,-1.2) {3};
				\node[blue] at (11.55,-1.2) {5};
			\end{tikzpicture}};
		\end{tikzpicture}
	\end{figure}

	The blue bars indicate the weak ascents of the word $\sigma_{\pmaj}(D)$, hence the runs are $R_0=5\: 2$, $R_1=5\: 2\: 1$, $R_2=5\: 3\: 2\: 1$, $R_3=6\: 4\: 3\: 1$, $R_4=6\: 4\: 3$, $R_5=7\: 6\: 4$, $R_6=7$, and $R_7=7$.
\end{example}
Now add the letter $\sigma_0 = 0$ at the beginning of the word $\sigma_{\pmaj}(D) = \sigma_1\sigma_2\dots\sigma_{kn}$, so that we now have a new ascent and a new run: $R_{-1} = 0$.  
\begin{remark}\label{rem:im}
	Observe that, by construction, for every $m\in [n]$ we have
	\[
		i_m=\sum_{j=-1}^{h-1}\# R_j +\#\{\lambda\in R_h\mid \lambda >\sigma_{i_m}\}.
	\]
\end{remark}

For every $m \in [n]$, if $\sigma_{i_m}$ occurs in $R_h$, then we set
\begin{equation}\label{eq:cl_def_um}
	u_m :=\#\{\lambda\in R_h\mid \lambda >\sigma_{i_m}\}+\#\{\lambda\in R_{h-1}\mid \lambda\leq \sigma_{i_m}\}.
\end{equation}

\begin{example}
	Consider $\sigma_\pmaj(D)$ from Example~\ref{ex:sigmamajDruns}. Then $n=7$,
	\[\begin{array}{lll}
		i_1=1 	\qquad	& 	\sigma_{i_1}=\sigma_1=5	\qquad   & 	u_1=1 \\ 
		i_2=2 		& 	\sigma_{i_2}=\sigma_2=2	   & 	u_2=2 \\ 
		i_3=5 		& 	\sigma_{i_3}=\sigma_5=1	   & 	u_3=2 \\ 
		i_4=7		& 	\sigma_{i_4}=\sigma_7=3    & 	u_4=3 \\ 
		i_5=10 		& 	\sigma_{i_5}=\sigma_{10}=6 & 	u_5=4 \\ 
		i_6=11 		& 	\sigma_{i_6}=\sigma_{11}=4 & 	u_6=4 \\ 
		i_7=16 		& 	\sigma_{i_7}=\sigma_{17}=7 & 	u_7=3 
	\end{array}\]
\end{example}

For every $m\in [n]$, $i_m$ and $u_m$ provide a range for $f_D(\sigma_{i_m})$.
\begin{lemma}\label{lem:cl_co_area}
	In the notation above, for all $m \in [n]$ we have
	\begin{equation}\label{eq:cl_dis_tech}
		i_m - u_m \leq c_{\sigma_{i_m}}(D) \leq i_m - 1\, ,
	\end{equation}
	equivalently
	\begin{equation*}
		i_m - u_m + 1 \leq f_D(\sigma_{i_m}) \leq i_m.
	\end{equation*}
\end{lemma}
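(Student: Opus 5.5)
The plan is to read both inequalities directly off the greedy algorithm of Definition~\ref{def:gen_pmaj}. Throughout, $\lambda:=\sigma_{i_m}$ and $h:=p_\lambda(D)$, so that $\lambda\in R_h$ by Remark~\ref{rem:Rpsigmam}. The key observation used in both directions is the following. A label $\mu$ can be chosen as $\sigma_s$ only if $\mu\in B_s$, and this requires $f_D(\mu)\leq s$. Conversely, once $s\geq f_D(\mu)$, all $k$ copies of $\mu$ stay in the multiset until $\mu$ is chosen for the first time.

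\emph{Upper bound.} Since $\sigma_{i_m}=\lambda$, we have $\lambda\in B_{i_m}$, hence $f_D(\lambda)\leq i_m$. This is immediate.

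\emph{Lower bound.} First I will reinterpret $i_m-u_m$ using Remark~\ref{rem:im} and \eqref{eq:cl_def_um}:
\[
i_m-u_m=\sum_{j=-1}^{h-2}\#R_j+\#\{\mu\in R_{h-1}\mid \mu>\lambda\}.
\]
The runs are strictly decreasing. So, after removing the extra letter $\sigma_0$ (which shifts positions by one), $t:=i_m-u_m$ is exactly the position in $\sigma_{\pmaj}(D)$ of the last letter of $R_{h-1}$ that is greater than $\lambda$. If no such letter exists, $t$ is the position of the last letter of $R_{h-2}$, with $t=0$ when $h=0$, where the bound is trivial.

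We must show $f_D(\lambda)\geq t+1$. Suppose instead $f_D(\lambda)\leq t$. Since $\lambda$ first occurs at $i_m>t+1$ (it lies in $R_h$, not in $R_{h-1}$), we have $\lambda\in B_{t+1}$. Now look at the choice of $\sigma_{t+1}$. There are two cases.

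\begin{enumerate}[label=(\alph*)]
\item $\sigma_t\in R_{h-1}$ with $\sigma_t>\lambda$. Then $\lambda\in X_{t+1}$, so $\sigma_{t+1}=\max X_{t+1}\geq\lambda$. Since $\sigma_{t+1}\neq\lambda$, we get $\lambda<\sigma_{t+1}<\sigma_t$. This is a descent, so $\sigma_{t+1}\in R_{h-1}$ and it is greater than $\lambda$, contradicting the maximality of $t$.
\item $\sigma_t$ is the last letter of $R_{h-2}$. Then $\sigma_t\leq\sigma_{t+1}$, since a weak ascent occurs there. If $X_{t+1}$ were nonempty, we would get $\sigma_{t+1}<\sigma_t$, which is impossible. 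So $X_{t+1}=\varnothing$ and $\sigma_{t+1}=\max B_{t+1}\geq\lambda$, hence $\sigma_{t+1}>\lambda$. But $\sigma_{t+1}$ is the first letter of $R_{h-1}$, contradicting the assumption that $R_{h-1}$ has no letter greater than $\lambda$.
\end{enumerate}

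In both cases we reach a contradiction, so $f_D(\lambda)\geq i_m-u_m+1$.

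The main point of care is the index bookkeeping: the shift by $\sigma_0$ in Remark~\ref{rem:im}, the edge cases $h=0$ and $h=1$ (where $R_{-1}=0$ plays the role of $R_{h-2}$), and making sure that $\lambda$ really has not been used before position $i_m$. The last point is exactly the statement that $i_m$ is its first occurrence.
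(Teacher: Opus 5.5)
Your route is the paper's: the upper bound comes from $\lambda=\sigma_{i_m}\in B_{i_m}$, and the lower bound comes by contradiction with the greedy choice rule. Your cases (a) and (b) spell out what the paper only sketches. However, the identification of $t:=i_m-u_m$ is off by one, and as written you prove a weaker inequality.

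In $\sum_{j=-1}^{h-2}\#R_j$, the summand $\#R_{-1}=1$ is exactly what turns a count of letters into a position, as in Remark~\ref{rem:im}. For $h\geq 1$ this sum is the position in $\sigma_{\pmaj}(D)$ of the \emph{first} letter of $R_{h-1}$. So let $t$ be the position of the last letter of $R_{h-1}$ exceeding $\lambda$ (resp.\ of the last letter of $R_{h-2}$ if there is none, with $t=0$ if moreover $h=1$). Then $i_m-u_m=t+1$, not $t$.

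The running example shows this:
\begin{itemize}
\item For $m=3$ we have $\lambda=1$, $h=1$ and $i_3-u_3=5-2=3$, while the last letter of $R_0=5\,2$ exceeding $1$ is $\sigma_2$.
\item For $m=4$ we have $i_4-u_4=7-3=4$, while the last letter of $R_1=5\,2\,1$ exceeding $3$ is $\sigma_3$.
\end{itemize}
Your case analysis is about this $t$. If $t$ were literally $i_m-u_m$, then $\sigma_t$ would be neither a letter of $R_{h-1}$ exceeding $\lambda$ nor the last letter of $R_{h-2}$, so neither (a) nor (b) would apply. Hence supposing $f_D(\lambda)\leq t$ and reaching a contradiction yields only $f_D(\lambda)\geq t+1=i_m-u_m$, i.e.\ $c_\lambda(D)\geq i_m-u_m-1$. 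The lemma is sharp in the example: $f_D(1)=4=i_3-u_3+1$, whereas your argument gives only $f_D(1)\geq 3$. For $h=1$ with no letter of $R_0$ exceeding $\lambda$, it gives nothing beyond $f_D(\lambda)\geq 1$.

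The repair is cheap. Your contradiction uses only $\lambda\in B_{t+1}$, and this already follows from the weaker hypothesis $f_D(\lambda)\leq t+1$. Indeed $t+1=i_m-u_m<i_m$, because $u_m\geq 1$: either a letter of $R_h$ precedes $\lambda$, or $\lambda$ opens $R_h$ and then the last letter of $R_{h-1}$ is $\leq\lambda$ by the weak ascent. So no copy of $\lambda$ has been removed before step $t+1$. Supposing $f_D(\lambda)\leq t+1$ and running (a) and (b) verbatim gives $f_D(\lambda)\geq t+2=i_m-u_m+1$, as required.

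Two smaller points:
\begin{itemize}
\item The same inequality $t+1<i_m$ is what justifies $\sigma_{t+1}\neq\lambda$ in (a) when every letter of $R_{h-1}$ exceeds $\lambda$. In that case $t+1$ is the first position of $R_h$, and ``$\lambda$ lies in $R_h$'' alone does not exclude $\sigma_{t+1}=\lambda$.
\item In (b) with $h=1$, use the convention $\sigma_0=n+1$ of Definition~\ref{def:gen_pmaj}, not the run convention $\sigma_0=0$. Then $X_1=B_1$ and $\sigma_1=\max B_1\geq\lambda$ directly.
\end{itemize}
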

\begin{proof}
	We prove the two inequalities separately. 
	
	To show that $i_m - u_m + 1 \leq f_D(\sigma_{i_m})$ there are two sub-cases.
	\begin{itemize}
		\item if $\sigma_{i_m}$ occurs in $R_0$, then $i_m=u_m = m$, and the inequality becomes $1 \leq f_D(\sigma_{i_m})$ which is always true;
		\item if $\sigma_{i_m}$ occurs in $R_h$ with $h > 0$, then suppose that $f_D(\sigma_{i_m}) \leq i_m - u_m$: by Remark~\ref{rem:im} in this case the algorithm in Definition~\ref{def:gen_pmaj} would have inserted the label $\sigma_{i_m}$ earlier in the construction of $\sigma_{\pmaj}(D)$, which is absurd.
	\end{itemize}
	
	To prove $f_D(\sigma_{i_m}) \leq i_m$, observe that if $f_D(\sigma_{i_m})>i_m$, then the algorithm in Definition~\ref{def:gen_pmaj} would insert $\sigma_{i_m}$ later in the construction of $\sigma_\pmaj(D)$, which is absurd. 
	
	This concludes the proof.
\end{proof}

We can now state a more precise lemma.
\begin{lemma}\label{lem:cl_avoid_dinv}
	Let $D \in \PF_{n,kn}$ and consider the iterative procedure to construct $\psi_n^{(k)}(D)$.
	For every $m \in [n]$ there are exactly $u_m$ suitable insertion points on the labelled Dyck path of size $(m-1)\times k(m-1)$, and the insertion procedure of $\sigma_{i_m}$ would avoid respectively $i_m - u_m$, $i_m - u_{m} + 1$, $\dots$, $i_m-1$ diagonal inversions, reading these points from left to right.
\end{lemma}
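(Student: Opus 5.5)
The argument will be an induction on $m$. It rests on two claims about the partial path $P_{m-1}$, the $(m-1)\times k(m-1)$ labelled Dyck path built from $\sigma_{i_1},\dots,\sigma_{i_{m-1}}$.

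\textbf{Structural claim (counting).} The suitable insertion points on the diagonal $y=\frac1k(x+p_{\sigma_{i_m}})$ are in bijection with
\[
\{\lambda\in R_h\mid \lambda>\sigma_{i_m}\}\;\sqcup\;\{\lambda\in R_{h-1}\mid \lambda\le \sigma_{i_m}\},\qquad h=p_{\sigma_{i_m}}.
\]
By \eqref{eq:cl_def_um} there are then exactly $u_m$ of them. The intended picture is as follows. The letters of run $R_j$ correspond to the occurrences of labels (counted with the multiplicity $k$ coming from the east-step labelling) whose shadows meet the diagonal at height $j$ on $P_{m-1}$. A point on that diagonal is the endpoint of a step of $P_{m-1}$ lying on the line. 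Such a point is suitable exactly when it does not sit directly above a north step whose label exceeds $\sigma_{i_m}$ (the $\times$ points in the example).

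To verify this, the first thing I would establish is an invariant relating $P_{m-1}$ to the runs. For each $j$, read the labels of north steps ending on the line $y=\frac1k(x+j)$ together with the east-step labels of $P_{m-1}$ whose main shadow starts there. This reading, restricted to already inserted labels, should reproduce the decreasing run $R_j$ (or $R_{j-1}$ for the upper shadow). The invariant is proved by induction: when $\sigma_{i_m}$ is inserted, it adds one north step on line $h$ and $k$ east steps. This matches the fact, from Remark~\ref{rem:Rpsigmam}, that $\sigma_{i_m}$ occurs in exactly the runs $R_h,\dots,R_{h+k-1}$.

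\textbf{Monotonicity claim (dinv count).} Moving from one suitable point to the next one to its right decreases the number of created diagonal inversions by exactly one. Between consecutive suitable points $P<P'$ on the line, the label $\sigma_{i_m}$ loses or gains pairs in three ways. It stops seeing in its main shadow the north steps lying between $P$ and $P'$, which affects condition (A) and the $\dinvcorr$ pairs. It starts seeing in its upper shadow the steps one level up, which affects condition (B). Its $k$ new east steps change which shadows contain them. I would carry out a local case analysis according to the letter of $R_h$ or $R_{h-1}$ that separates $P$ from $P'$, comparing that letter with $\sigma_{i_m}$. The claim is that in each case exactly one pair is lost and none gained, through a telescoping between the $\tdinv$ part and the $\dinvcorr$ part. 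This is the pattern $a+b$ visible in Figure~\ref{fig:gen_exa_psi2}.

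\textbf{Endpoint.} I then fix the value at one end. At the rightmost suitable point, the label $\sigma_{i_m}$ should create exactly $k(m-1)-(i_m-1)$ inversions. By Remark~\ref{rem:im}, $i_m-1$ counts the letters before position $i_m$, and these are precisely the pairs it avoids with all earlier inserted labels, counted with multiplicity $k$. Combined with the monotonicity claim, the avoided counts from left to right are $i_m-u_m,\dots,i_m-1$.

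\textbf{Main obstacle.} I expect the monotonicity claim to be the hardest step, because the half-open shadows and the multiplicity-$k$ east-step labels make the bookkeeping delicate. I would first prove it for $k=1$, following the argument of \cite{DAdderio_Sgubin_Expo}. I would then handle general $k$ by viewing each east step as a separate copy of the label, which is consistent with Remark~\ref{rmk:east_step_labelling}.
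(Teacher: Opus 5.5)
Your plan is essentially the paper's proof. It counts the suitable points via $\{\lambda\in R_h\mid\lambda>\sigma_{i_m}\}\sqcup\{\lambda\in R_{h-1}\mid\lambda\le\sigma_{i_m}\}$. It gets the value $i_m-1$ at the rightmost suitable point by showing that each earlier $\sigma_{i_j}$ forms exactly $k-\gamma_m(\sigma_{i_j})$ inversions with $\sigma_{i_m}$, where $\gamma_m(\sigma_{i_j})$ counts the occurrences of $\sigma_{i_j}$ before position $i_m$ in $\sigma_{\pmaj}(D)$. It then asserts a unit change between consecutive suitable points, and the paper omits both case analyses just as you do.

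One correction: your invariant holds only as a set statement about east steps. The east steps ending on $y=\frac1k(x+j)$ carry exactly the already inserted labels lying in $R_j$, because a label of area $w$ has one east step ending on each line $y=\frac1k(x+j)$ with $w\le j\le w+k-1$. The north steps ending on that line carry the labels of area $j-k$ instead, and the left-to-right order is not the run order: in Step~6 of Example~\ref{exa:cl_constr_psi} the east-step labels on the line read $6,1,3$. So your monotonicity analysis should rest on this geometric picture. It also needs the fact that every inserted label of area exactly $h$ is larger than $\sigma_{i_m}$, since it precedes $\sigma_{i_m}$ in the decreasing run $R_h$.
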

\begin{proof}
	The key observation is that the suitable points along the diagonal $y=\frac{1}{k}(x+p_{\sigma_{i_m}})$ are precisely the points where our insertion procedure would sit $\sigma_{i_m}$ on top of a smaller label on the diagonal $y=\frac{1}{k}(x+p_{\sigma_{i_m}}-1)$, or right after\footnote{In the case $p_{\sigma_{i_m}} = 0$ it might also be at the beginning of the path: this will be counted by the run $R_{-1} = 0$.} a (necessarily bigger) label on the diagonal $y=\frac{1}{k}(x+p_{\sigma_{i_m}})$. Since by Remark~\ref{rem:Rpsigmam} we have $\sigma_{i_m}\in R_{p_{\sigma_{i_m}}}$, this gives precisely $u_m$ suitable points. 
	
	First we want to show that inserting $\sigma_{i_m}$ in the rightmost suitable point avoids $i_m-1$ diagonal inversions.
	
	To do so, observe that $i_m-1$ is the number of letters to the left of $\sigma_{i_m}$ in $\sigma_\pmaj(D)$, hence if for every $j\in [m-1]$ we denote by $\gamma_m(\sigma_{i_j})$ the number of occurrences of the letter $\sigma_{i_j}$ to the left of $\sigma_{i_m}$ in $\sigma_\pmaj(D)$, then clearly
	\[i_m-1=\sum_{j=1}^{m-1}\gamma_m(\sigma_{i_j}).\]
	
	Therefore, to prove our claim, it suffices to show that inserting $\sigma_{i_m}$ in the rightmost suitable point, for every $j\in [m-1]$ there are exactly $k-\gamma_m(\sigma_{i_j})$ diagonal inversions of the form $(a,b)$ in $\Dinv(D)$ with $\{a,b\}=\{\sigma_{i_j},\sigma_m\}$. This can be done with a case-by-case analysis, which can be carried over with the help of the tables in the Appendix~\ref{app:dinvtable}: we leave this straightforward, though tedious, task to the reader.
	
	A similar case-by-case analysis, that we also omit, shows that going from a suitable insertion point to the one immediately to its left increases the dinv precisely by $1$, and this proves the last statement, and hence the lemma.
\end{proof}

We can finally prove our first lemma.
\begin{proof}[Proof of Lemma~\ref{lem:well_def}]
	It follows immediately by combining Lemma~\ref{lem:cl_co_area} and Lemma~\ref{lem:cl_avoid_dinv}.
\end{proof}

\section{\texorpdfstring{$\psi_n^{(k)}$}{} is the inverse of \texorpdfstring{$\phi_n^{(k)}$}{}} \label{sec:is_the_inverse}

In order to prove that $\psi_n^{(k)}$ is the inverse of $\phi_n^{(k)}$, the following remark is crucial.
\begin{remark}\label{rem:dlambda}
	Given $D\in \PF_{n,kn}$, in our iterative procedure defining $\psi_n^{(k)}(D)$, we claim that the insertions of $\sigma_{m'}$ for $m'>m$ do not affect the number of labels $\sigma_j$ with $j<m$ which $\sigma_m$ creates a diagonal inversion with. This is clear, since the insertion of $\sigma_{m'}$ does not change their relative positions for the temporary diagonal inversions, i.e.\ conditions (A) and (B) in Definition~\ref{def:gen_tdinv} (cf.\ Figure~\ref{fig:def_tdinv}), nor for the diagonal inversion corrections of Definition~\ref{def:gen_dinvcorr_set} (cf.\ Figure~\ref{fig:gen_dinvcompA}, but also Remark~\ref{rmk:east_step_labelling}).
\end{remark}

\begin{theorem}
	The function $\psi_n^{(k)}$ is the inverse of $\phi_n^{(k)}$.
\end{theorem}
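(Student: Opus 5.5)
Since $\PF_{n,kn}$ is a finite set, it suffices to prove a one-sided identity. Concretely, we show $\phi_n^{(k)}\circ\psi_n^{(k)}=\mathrm{id}$, so that $\phi_n^{(k)}$ is surjective, hence bijective, with inverse $\psi_n^{(k)}$. Fix $D\in\PF_{n,kn}$ and put $E:=\psi_n^{(k)}(D)$, which is well defined by Lemma~\ref{lem:well_def}. Let $\sigma_{i_1},\dots,\sigma_{i_n}$ be the labels in the order of first occurrence in $\sigma_{\pmaj}(D)$. By Definition~\ref{def:gen_phi}, we must prove
\[
f_{\phi_n^{(k)}(E)}(\lambda)=f_D(\lambda)\qquad\text{for every }\lambda\in[n].
\]

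The first step is to identify the diagonal reading word of $E$: we claim $\wdiag(E)=\sigma_{i_1}\sigma_{i_2}\cdots\sigma_{i_n}$. By construction, $\sigma_{i_m}$ is inserted on the diagonal $y=\frac1k(x+p_{\sigma_{i_m}})$, so its row has area contribution $p_{\sigma_{i_m}}$. Later insertions add a north step together with $k$ east steps at a single point, so they do not change the area contributions of rows already present. Hence the diagonal of each label in $E$ is determined by $p_\lambda$, and $p_{\sigma_{i_1}}\le p_{\sigma_{i_2}}\le\cdots$ because the runs are read left to right (Remark~\ref{rem:Rpsigmam}). It remains to handle labels on the same diagonal, i.e.\ in the same run. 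Within a run the first occurrences appear in decreasing order of value. We must check that in $E$ they sit left to right in the order of insertion. The argument is that a newly inserted label on a given diagonal is placed after the labels already on that diagonal that it must follow, and this should come out of the description of suitable points in the proof of Lemma~\ref{lem:cl_avoid_dinv}. I expect this ordering check to be the delicate point. A possible fallback is to avoid it altogether: the relevant quantity is $d_\lambda(E)$, counted with respect to the labels preceding $\lambda$ in $\wdiag(E)$, and one can argue that this set coincides with the labels inserted before $\lambda$, using the fact that diagonal inversions between labels on the same or adjacent diagonals are already oriented by the conditions (A) and (B) in Definition~\ref{def:gen_tdinv}.

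Granting this, the position of $\sigma_{i_m}$ in $\wdiag(E)$ is $m$. Next we compute $d_{\sigma_{i_m}}(E)$, the number of elements of $\Dinv(E)$, with multiplicity, pairing $\sigma_{i_m}$ with an earlier letter $\sigma_{i_j}$, $j<m$. By Remark~\ref{rem:dlambda}, this number is already fixed at Step $m$ of the construction and is unchanged by later insertions. At Step $m$ the insertion point was chosen so that $\sigma_{i_m}$ avoids exactly $c_{\sigma_{i_m}}(D)=f_D(\sigma_{i_m})-1$ diagonal inversions. By the convention of Example~\ref{exa:cl_constr_psi}, "avoided" means $k(m-1)-d$, where $d$ is the number of dinv pairs created. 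Hence
\[
d_{\sigma_{i_m}}(E)=k(m-1)-f_D(\sigma_{i_m})+1 .
\]

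Finally, substituting into Definition~\ref{def:gen_phi} gives
\[
f_{\phi_n^{(k)}(E)}(\sigma_{i_m})=k(m-1)-d_{\sigma_{i_m}}(E)+1=f_D(\sigma_{i_m}),
\]
so $\phi_n^{(k)}(E)=D$ and the theorem follows. As noted above, the main obstacle is the identification of $\wdiag(E)$; everything else is bookkeeping, using Remark~\ref{rem:dlambda} and the uniqueness statement in Lemma~\ref{lem:well_def}.
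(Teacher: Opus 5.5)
Your proof depends on the identification $\wdiag(E)=\sigma_{i_1}\sigma_{i_2}\cdots\sigma_{i_n}$, and this identification is false. Your fallback, that the labels preceding $\lambda$ in $\wdiag(E)$ are exactly the labels inserted before $\lambda$, is false too. The paper's running example refutes both. There the insertion order is $5,2,1,3,6,4,7$, while $E=\psi_7^{(3)}(D)$ is the path of Figure~\ref{fig:labelled_Dyck}, with $\wdiag(E)=2513467$. At Step~2 the label $2$ has $c_2=0$, so it goes to the leftmost suitable point, the origin. That puts it to the \emph{left} of the earlier-inserted $5$ on the same diagonal. In general nothing forces a later-inserted label to lie to the right of earlier ones on its diagonal.

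Consequently your formula $d_{\sigma_{i_m}}(E)=k(m-1)-f_D(\sigma_{i_m})+1$ fails. It predicts $d_5(E)=0$ and $d_2(E)=3$, whereas actually $d_5(E)=3$ and $d_2(E)=0$ (here $2$ is the first letter of $\wdiag(E)$). Your final display gives the right value only because the wrong position and the wrong $d$ cancel. Showing that they always cancel is exactly the missing step.

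What you need is that the two orders may disagree, but only harmlessly. For an ordering $\tau$ of the labels, the number of inversions avoided by $\lambda$ relative to $\tau$ is the sum of $k-\delta(\lambda,\mu)$ over the $\mu$ preceding $\lambda$ in $\tau$. Here $\delta(\lambda,\mu)\le k$ counts the elements of $\Dinv(E)$ whose underlying pair is $\{\lambda,\mu\}$. By construction and Remark~\ref{rem:dlambda}, $f_D(\lambda)-1$ is this sum for the insertion order. By Definition~\ref{def:gen_phi}, $f_{\phi_n^{(k)}(E)}(\lambda)-1$ is this sum for $\wdiag(E)$.

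Now suppose $\mu$ precedes $\lambda$ in exactly one of the two orders. Across different diagonals both orders are governed by $p$, so $p_\mu=p_\lambda$ and $\lambda,\mu$ lie on the same diagonal of $E$. The one inserted first is the larger, since first occurrences within a run decrease. The one read first in $\wdiag(E)$ is the one further left. Hence the smaller label lies strictly to the left of the larger one on the same diagonal.

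This is case (A1) of Table~\ref{tab:poss_tdinvs} together with case (A) of Table~\ref{tab:poss_dinvcorrs}: one temporary diagonal inversion plus $k-1$ diagonal inversion corrections. So $\delta(\lambda,\mu)=k$, and such a $\mu$ contributes $0$ to both sums. The two avoided counts therefore agree even though the predecessor sets do not. This is how the paper's proof concludes, and it is what must replace your identification of $\wdiag(E)$. The rest of your proposal (the one-sided identity on a finite set, and Remark~\ref{rem:dlambda}) is fine.
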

\begin{proof}
	Since these are maps from the finite set $\PF_{n,kn}$ into itself, it is enough to prove that $\phi_n^{(k)}\circ \psi_n^{(k)}$ is the identity function of $\PF_{n,kn}$.\\
	Fix $D \in \PF_{n,kn}$. We want to show that $f_{\phi_n^{(k)} \circ \psi_n^{(k)}(D)}(\lambda) = f_D(\lambda)$ for any given label $\lambda \in [n]$.\\
	Consider the word $\sigma_{\pmaj}(D) = w_1w_2\dots w_n$ of $D$. Set $D' = \psi_n^{(k)}(D) \in \PF_{n,kn}$, and consider the reading word $\wdiag(D') = v_1v_2\dots v_n$ of $D'$.\\
	By construction of $D' = \psi_n^{(k)}(D)$, for every $m \in [n]$ we have
	\begin{equation}\label{eq:cl_pres_area}
		p_{w_m}(D) = a_{w_m}(D').
	\end{equation}
	Fix a label $\lambda = w_m = v_j$ for some $m,j \in [n]$. Using Remark~\ref{rem:dlambda}, by construction of $\psi_n^{(k)}$ we have:
	\begin{equation*}
		f_{D}(\lambda) - 1 = \#\text{ of avoided diagonal inversions between $\lambda = w_m$ and $w_1,\dots,w_{m-1}$ in $f$}.
	\end{equation*}
	On the other hand, by the definition of $\phi_n^{(k)}$ (cf. Definition~\ref{def:gen_phi}):
	\begin{align*}
		f_{\phi_{n}^{(k)}\circ \psi_{n}^{(k)}(D)}(\lambda) - 1 &= f_{(\phi_{n}^{(k)}(D'))}(\lambda) - 1\\
		&= \#\text{ avoided diagonal inversions between $\lambda = v_j$ and $v_1,\dots,v_{j-1}$ in $f$}.
	\end{align*}
	Therefore, the following result would prove the equality $f_D = f_{\phi_{n}^{(k)}\circ \psi_{n}^{(k)}(D)}$: if a label $\mu$ precedes $\lambda$ in one of the two words but not in the other, the pair $\lambda$ then $\mu$ avoids no diagonal inversion, i.e.~they contribute exactly $k$ diagonal inversions.\\
	By definition, the letters in $\sigma_{\pmaj}(D)$ are ordered increasingly with respect to their $\pmaj$ contribute in $D$ and the letters in $\wdiag(D')$ with respect to the $\area$ contribute in $D'$.\\
	Thus, Equation~\ref{eq:cl_pres_area} implies that two labels $\mu$ and $\lambda$ that swap relative position in $\sigma_\pmaj(D)$ and $\wdiag(D')$ must satisfy:
	\[
		a_\mu(D') = p_\mu(D) = p_\lambda(D) = a_\lambda(D').
	\]
	We have two subcases:
	\begin{itemize}
		\item \ul{If $i' < i$ and $j' > j$}: thus $\mu$ precedes $\lambda$ in $\sigma_\pmaj(D)$. Since $\mu$ and $\lambda$ contribute the same to $\pmaj$, the inequality $\mu > \lambda$ must hold. Conversely, since $\lambda$ precedes $\mu$ in $\wdiag(D')$ but they contribute the same $\area$, $f_D(\lambda) < f_D(\mu)$.\\
		By looking at Tables~\ref{tab:poss_tdinvs} and~\ref{tab:poss_dinvcorrs} from Appendix~\ref{app:dinvtable}, the pair $\{\mu,\lambda\}$ falls into case $(A1)$ with $f_D(\lambda) < f_D(\mu)$.\\
		Therefore they contribute $1\ \tdinv$ and $k-1\ \dinvcorr$.
		\item \ul{If $i' > i$ and $j' < j$} by an analogous argument we obtain that $\lambda > \mu$. By looking at the same tables, the pair $\{\lambda,\mu\}$ falls into case $(A2)$ with $f(\lambda) > f(\mu)$.\\
		Thus, these labels contribute $1\ \tdinv$ and $k-1\ \dinvcorr$.
	\end{itemize}
	We have proven the claim in both instances. The maps $\phi_{n}^{(k)}$ and $\psi_{n}^{(k)}$ are inverse of one another.
\end{proof}

\begin{corollary}
	For every $D\in \PF_{n,kn}$ we have that 
	\[
		\pmaj(\phi_{n}^{(k)}(D))=\area(D).
	\]
\end{corollary}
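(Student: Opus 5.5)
The corollary follows from the inverse relation just established, together with the area-preserving property built into the construction of $\psi_n^{(k)}$. Fix $D\in \PF_{n,kn}$ and set $E:=\phi_n^{(k)}(D)$. By the theorem above, $\psi_n^{(k)}$ is the inverse of $\phi_n^{(k)}$, so $\psi_n^{(k)}(E)=D$. It therefore suffices to prove that for every $E\in\PF_{n,kn}$
\[
\area(\psi_n^{(k)}(E))=\pmaj(E).
\]

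To prove this identity, I will use the defining property of the iterative construction of $\psi_n^{(k)}$. Write $i_1<\dots<i_n$ for the positions of the first occurrences of the labels in $\sigma_{\pmaj}(E)$. At Step $m$, the label $\sigma_{i_m}$ is inserted at a suitable point on the diagonal $y=\frac1k(x+p_{\sigma_{i_m}}(E))$, and such a point exists and is unique by Lemma~\ref{lem:well_def}. The inserted north step is immediately followed by $k$ east steps. Consequently, at the moment of insertion the row of $\sigma_{i_m}$ has area contribution exactly $p_{\sigma_{i_m}}(E)$. This is Equation~\eqref{eq:cl_pres_area}, namely $a_{\lambda}(\psi_n^{(k)}(E))=p_\lambda(E)$, recorded in the proof of the theorem.

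The one point to check is that this row-area is not altered by later insertions. Inserting a north step followed by $k$ east steps at a lattice point of the path shifts the entire portion of the path after that point by the vector $(k,1)$. This vector is parallel to the diagonal $y=x/k$, so every later north step keeps its distance from the main diagonal. Every earlier north step is untouched. Hence each row keeps the number of full squares between the path and the main diagonal that it had when it was created. This is the same invariance argument as in Remark~\ref{rem:dlambda}, applied to area rather than dinv.

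Summing over all labels then gives
\[
\area(\psi_n^{(k)}(E))=\sum_{\lambda\in[n]}a_\lambda(\psi_n^{(k)}(E))=\sum_{\lambda\in[n]}p_\lambda(E)=\pmaj(E).
\]
Substituting $E=\phi_n^{(k)}(D)$ yields $\area(D)=\pmaj(\phi_n^{(k)}(D))$.

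I expect no real obstacle: the substantive work is already contained in Lemma~\ref{lem:well_def} and in the inverse theorem. The only step needing care is the translation-invariance of row areas under later insertions, and the $(k,1)$-shift observation handles it directly.
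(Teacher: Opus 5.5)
Your proof is correct and matches the paper's argument: you show $\area(\psi_n^{(k)}(E))=\pmaj(E)$ from the construction of $\psi_n^{(k)}$, then apply it to $E=\phi_n^{(k)}(D)$ using that $\psi_n^{(k)}$ inverts $\phi_n^{(k)}$. Your $(k,1)$-translation remark, which shows that later insertions leave each row's area unchanged, makes explicit what the paper leaves implicit in the phrase ``by the very construction''.
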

\begin{proof}
	For every $D'\in \PF_{n,kn}$, by the very construction of $\psi_{n}^{(k)}(D')$ we clearly have $\area(\psi_{n}^{(k)}(D'))=\pmaj(D')$. Applying this to $D'=\phi_{n}^{(k)}(D)$ and using the fact that $\phi_{n}^{(k)}$ and $\psi_{n}^{(k)}$ are inverse of each other, we conclude.
\end{proof}
The above corollary (finally) completes the proof of Theorem~\ref{thm:main_bijection}.

\section{Applications to symmetric functions}\label{sec:SFapplications}

To state our main corollary, we need our last definitions.
\begin{definition}
	Given $D\in \LDyck_{n,kn}(D)$, we define its \emph{row word} as the word $\wrow(D)$ obtained by reading the labels in the diagram of $D$ by rows, from bottom to top.
\end{definition}
For example, the labelled Dyck path $D$ in Figure~\ref{fig:gen_exa_imgphi} has $\wrow(D)=2513647$.

\smallskip

Consider two compositions $\mu, \nu$ such that $|\mu| + |\nu| = n$. They parametrize some shuffle products defined in~\cite{DDILLV} as follows.\\
Construct the sets $K_{\mu_1} = \{n, n-1, \dots, n-\mu_1+1\}$, $K_{\mu_2} = \{n-\mu_1, n-\mu_1-1, \dots, n-\mu_1-\mu_2+1\}$, $\dots$, and the sets $I_{\nu_1} = \{1,2,\dots,\nu_1\}$, $I_{\nu_2} = \{\nu_1+1, \nu_1+2, \dots, \nu_1+\nu_2\}$ and so on.\\
Let $\uparrow\!\! K_{\mu_i}$ be the word obtained by ordering the elements of the set increasingly and $\downarrow\!\! I_{\nu_j}$ in decreasing order, so for example $\uparrow\!\! K_{\mu_1} = (n-\mu_1+1)\dots(n-1)n$ and $\downarrow\!\! I_{\nu_1} = \nu_1(\nu_1-1)\dots21$. Then, we define the \emph{shuffle product} of $\mu$ and $\nu$ as:
\[
	W(\mu;\nu) := \uparrow\!\! K_{\mu_1}\ \shuffle\ \uparrow\!\! K_{\mu_2}\ \shuffle\ \dots\ \downarrow\!\! I_{\nu_1}\ \shuffle\ \downarrow\!\! I_{\nu_2}\ \shuffle\ \dots\ .
\]
This allows us to define some subsets of $\PF_{n,kn}$ parametrized by the two compositions:
\[
	\PF_{n,kn}(\mu;\nu) := \{f_D \in \PF_{n,kn}\ | \ \wrow(D) \in W(\mu;\nu)\}.
\]

\smallskip

The following version of the shuffle theorem for $\nabla^k e_n$ can be deduced from Theorem~\ref{thm:main_bijection} and Theorem~\ref{thm:rationalMellit} with little more work. See Section~\ref{sec:dinv} to recall the few missing definitions.
\begin{corollary}\label{cor:identitypmaj}
	We have	
	\[
		\nabla^k e_n=\sum_{D\in \PF_{n,kn}}q^{\area(D)}t^{\pmaj(D)}L_{n,\Des(\wrow(D)^{-1})}.
	\]
\end{corollary}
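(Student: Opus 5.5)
Starting from Theorem~\ref{thm:rationalMellit}, we reindex the sum through the bijection $\phi_n^{(k)}$ of Theorem~\ref{thm:main_bijection}. Writing $D'=\phi_n^{(k)}(D)$, that theorem gives $q^{\dinv(D)}t^{\area(D)}=q^{\area(D')}t^{\pmaj(D')}$. So
\[
\nabla^k e_n=\sum_{D'\in \PF_{n,kn}}q^{\area(D')}t^{\pmaj(D')}L_{n,\Des(\wdiag((\phi_n^{(k)})^{-1}(D'))^{-1})},
\]
and what remains is to match the quasisymmetric functions.

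The cleanest route is to prove that $\wdiag(D)=\wrow(\phi_n^{(k)}(D))$ for every $D\in\PF_{n,kn}$; this would give the identity term by term. I would prove it directly from Definition~\ref{def:gen_phi}. The labels $\sigma_1,\dots,\sigma_n$ of $\wdiag(D)$ land in columns $c_i:=k(i-1)-d_{\sigma_i}(D)+1$. The row word of $\phi_n^{(k)}(D)$ is determined by these columns: its Dyck path is fixed by the column multiplicities, and within a column the labels increase from bottom to top. Hence $\wrow(\phi_n^{(k)}(D))$ lists the labels by increasing column, and increasingly within each column. It therefore suffices to show two things: the sequence $c_1\le c_2\le\dots\le c_n$ is weakly increasing, and $c_i=c_{i+1}$ forces $\sigma_i<\sigma_{i+1}$. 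Equivalently, $d_{\sigma_{i+1}}(D)\le d_{\sigma_i}(D)+k$, with equality only if $\sigma_i<\sigma_{i+1}$.

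If this monotonicity fails, I would fall back on a symmetric-function argument. It is enough to show that $\sum_D q^{\area}t^{\pmaj}L_{n,\Des(\wrow(D)^{-1})}$ has the same scalar products with every $h_\nu$ as the Mellit side, because the descent sets of inverse reading words encode these scalar products through standard shuffle arguments. One would then check that $\phi_n^{(k)}$ respects the relevant descent or shuffle classes.

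The main obstacle is the comparison of consecutive letters of $\wdiag(D)$, namely bounding $d_{\sigma_{i+1}}-d_{\sigma_i}$. The plan is to partition the earlier labels $\sigma_j$, $j<i$, and to compare the inversion counts they produce with $\sigma_i$ and with $\sigma_{i+1}$. Since $\sigma_i$ and $\sigma_{i+1}$ are consecutive in the diagonal reading order, their north endpoints lie either on the same diagonal $y=\frac{1}{k}(x+a)$ or on adjacent ones. One then shows that each earlier label contributes at most as many diagonal inversions with $\sigma_{i+1}$ as with $\sigma_i$. The pair $\{\sigma_i,\sigma_{i+1}\}$ itself adds at most $k$, and exactly $k$ only when $\sigma_i<\sigma_{i+1}$, as in the case $(A1)$ computation in the proof that $\psi_n^{(k)}$ inverts $\phi_n^{(k)}$. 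I expect this shadow-geometry case analysis to be the bulk of the work, using the tables in Appendix~\ref{app:dinvtable} as in Lemma~\ref{lem:cl_avoid_dinv}.
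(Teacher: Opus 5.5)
\textbf{Genuine gap: the word identity you plan to prove is false.} Your reindexing through $\phi_n^{(k)}$ is fine, but $\wdiag(D)=\wrow(\phi_n^{(k)}(D))$ fails already on the paper's running example. For $D$ in Figure~\ref{fig:labelled_Dyck}:
\begin{itemize}
\item $\wdiag(D)=2513467$, with $d$-values $0,3,3,3,5,9,3$.
\item The columns $c_i=3(i-1)-d_{\sigma_i}(D)+1$ are therefore $1,1,4,7,8,7,16$.
\item So label $4=\sigma_5$ lands in column $8$, while label $6=\sigma_6$ lands in column $7$. Indeed $\wrow(\phi_7^{(3)}(D))=2513647$.
\end{itemize}
Hence $d_{\sigma_6}(D)=9>d_{\sigma_5}(D)+k=8$, and the inequality $d_{\sigma_{i+1}}\le d_{\sigma_i}+k$ you hoped to extract from the tables does not hold. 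Your per-label comparison also breaks. The earlier label $5$ forms the inversion $(6,5)$ with $\sigma_6=6$ but none with $\sigma_5=4$, while the pair $\{4,6\}$ itself contributes the full $k=3$. No finer shadow case analysis can rescue this route.

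\textbf{The fallback does not supply the missing idea.} Pairing with $h_\nu$ presupposes that the right-hand side is symmetric, which is not known a priori. For a sum of $L_{n,S}$ you would have to match coefficients for all compositions, i.e.\ essentially the joint distribution of the statistics and the inverse descent sets. Moreover, ``check that $\phi_n^{(k)}$ respects the relevant descent classes'' is exactly what has to be proved.

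\textbf{What is actually needed} is the paper's Lemma~\ref{lem:descents}, which is much weaker than equality of words. Since $\Des(w^{-1})$ only records whether $\lambda+1$ occurs before $\lambda$ in $w$, it suffices that the relative order of the \emph{values} $\lambda,\lambda+1$ is the same in $\wdiag(D)$ and in $\wrow(\phi_n^{(k)}(D))$. In the example, $4$ and $6$ swap, but they are not consecutive values, so the inverse descent sets agree.

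The argument compares consecutive values rather than consecutive positions. Let $\lambda=w_i$ and $\lambda+1=w_j$ with $i<j$ in $\wdiag(D)$.
\begin{itemize}
\item For every earlier label $\mu=w_l$ with $l<i$, the pair $(\mu,\lambda+1)$ avoids at least as many diagonal inversions as $(\mu,\lambda)$.
\item For $\tdinv$: $\mu<\lambda$ if and only if $\mu<\lambda+1$, so the cases of Table~\ref{tab:poss_tdinvs} match up.
\item For $\dinvcorr$: the count depends only on relative position and decreases with the area difference. The single bad configuration would force $\lambda+1$ before $\lambda$ in $\wdiag(D)$, contradicting the hypothesis.
\item Avoided counts are nonnegative, so $f_{\phi_n^{(k)}(D)}(\lambda)\le f_{\phi_n^{(k)}(D)}(\lambda+1)$.
\item A tie puts both labels in one column with $\lambda$ below, so $\lambda$ still precedes $\lambda+1$ in the row word.
\end{itemize}
This lemma, not the word identity, is what makes the $L$-terms match under $\phi_n^{(k)}$.
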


The proof of this result relies on the following observation.
\begin{lemma}\label{lem:descents}
	Consider $D \in \PF_{n,kn}$ and its image $D' := \phi_{n}^{(k)}(D)$. Then a label $\lambda \in [n-1]$ precedes (respectively appears later than) $\lambda+1$ in $\wdiag(D)$ if and only if the same holds in $\wrow(D')$.\\
	Therefore, $\Des(\wdiag(D)^{-1}) = \Des(\wrow(D')^{-1})$.
\end{lemma}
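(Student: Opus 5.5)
Throughout, write $D'=\phi_n^{(k)}(D)$, and for a label $\lambda$ write $a_\lambda=a_\lambda(D)$. The plan is to compare the positions of $\lambda$ and $\lambda+1$ in the two words directly from the definitions.

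First, the two reading orders.
\begin{itemize}
\item In $\wdiag(D)$, labels are read by increasing diagonal, i.e.\ by $a_\lambda$. Labels on the same diagonal are read left to right, i.e.\ by increasing $f_D$; no two labels share both diagonal and column, since labels in a column lie on distinct rows and hence distinct diagonals.
\item In $\wrow(D')$, labels are read by increasing row. Rows of $D'$ are ordered by column, with ties in a column ordered by label (labels increase upward). So $\lambda$ precedes $\mu$ in $\wrow(D')$ iff $f_{D'}(\lambda)<f_{D'}(\mu)$, or the two columns are equal and $\lambda<\mu$.
\end{itemize}

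By Definition~\ref{def:gen_phi}, if $\lambda=\sigma_i$ in $\wdiag(D)=\sigma_1\cdots\sigma_n$, then $f_{D'}(\sigma_i)=k(i-1)-d_{\sigma_i}(D)+1$, where $d_{\sigma_i}$ counts dinv pairs between $\sigma_i$ and earlier letters. It therefore suffices to show the following for $\lambda$ and $\lambda+1$: if $\lambda=\sigma_i$ and $\lambda+1=\sigma_j$ with $i<j$, then $f_{D'}(\lambda)\le f_{D'}(\lambda+1)$. Equality is acceptable here, since ties are broken by $\lambda<\lambda+1$. Symmetrically, if $j<i$ we need $f_{D'}(\lambda+1)<f_{D'}(\lambda)$ strictly.

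Since $\wdiag(D)$ is a bijective ordering of the labels, the equivalence ``precedes iff precedes'' follows from the two implications. The descent set equality is then immediate: $i\in\Des(w^{-1})$ iff $i+1$ appears before $i$ in $w$.

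For the key inequality, write $f_{D'}(\sigma_j)-1=\sum_{l<j}\big(k-\#\{\text{dinv pairs between }\sigma_l,\sigma_j\}\big)$, and similarly for $\sigma_i$. This is valid because each pair of labels contributes at most $k$ dinv, as observed in the proof that $\phi_n^{(k)}$ is well defined. Note also that $d_{\sigma_j}$ counts pairs with earlier letters only, so subsequent letters do not matter. The difference $f_{D'}(\sigma_j)-f_{D'}(\sigma_i)$ then becomes a sum of per-letter terms.

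I would try to construct an injection, or a direct termwise comparison, showing that every letter $\sigma_l$ with $l<i$ avoids at least as much dinv with $\lambda+1$ as with $\lambda$. The reason to expect this is that $\lambda$ and $\lambda+1$ are consecutive values, so for every third label $\mu$ the comparisons $\mu<\lambda$ and $\mu<\lambda+1$ agree. Hence the conditions (A)/(B) of Definition~\ref{def:gen_tdinv} and the $\dinvcorr$ counts depend only on the relative positions of the steps, and since $\lambda+1$ comes later in diagonal order, it avoids weakly more.

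In addition, the letters strictly between positions $i$ and $j$, including $\lambda$ itself, contribute nonnegatively. This yields $f_{D'}(\lambda)\le f_{D'}(\lambda+1)$. For the case $j<i$, I would add one extra strictly positive term: the pair $\{\lambda+1,\lambda\}$, with $\lambda+1$ read first and $\lambda$ smaller, must avoid at least one dinv. I would check this against the table in Appendix~\ref{app:dinvtable}.

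The main obstacle is this monotonicity of avoided dinv between $\sigma_l$ and a later-read label, as the position moves along diagonals. It requires a case analysis, in the style of Lemma~\ref{lem:cl_avoid_dinv}, over the relative diagonal levels $a_\lambda,a_{\lambda+1},a_{\sigma_l}$ and the columns. The first thing I would try is the case $a_\lambda=a_{\lambda+1}$. After that comes $a_{\lambda+1}=a_\lambda+1$, and finally larger gaps, where every earlier letter $\sigma_l$ contributes either exactly $0$ or exactly $k$ avoided dinv to one of the two.
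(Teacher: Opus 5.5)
Your reduction matches the paper's proof. You write $f_{D'}(\sigma_j)-1$ as a sum over earlier letters of the diagonal inversions each one avoids with $\sigma_j$, and compare termwise using that a third label compares the same way with $\lambda$ and $\lambda+1$. You are more careful than the paper about the letters read between $\lambda$ and $\lambda+1$, and about the strict inequality needed when $\lambda+1$ is read first.

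\textbf{The gap.} The step everything rests on is not proved: for $\mu$ read before both, the later-read of $\lambda,\lambda+1$ avoids at least as much with $\mu$ as the earlier-read one. ``It comes later in diagonal order'' is only a heuristic, and your planned case split fails as stated. For gaps $2\le a_{\lambda+1}-a_\lambda<k$, it is false that every earlier letter avoids exactly $0$ or exactly $k$ with one of the two labels. Take $k=3$ and $D\in\PF_{4,12}$ with $f_D(3)=1$, $f_D(2)=2$, $f_D(4)=7$, $f_D(1)=10$. Then $\wdiag(D)=3412$, $a_1=0$, $a_2=2$, and $\mu=4$ avoids $1$ with $\lambda=1$ and $2$ with $\lambda+1=2$. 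Diagonal level alone is not the mechanism either: for fixed $\mu$, the $\dinvcorr$ count also depends on whether the other label lies east or west of $\mu$.

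\textbf{How to close it.} What you need is a closed per-pair formula. It follows from Definitions~\ref{def:gen_tdinv} and~\ref{def:gen_dinvcorr_set}, or equivalently from Appendix~\ref{app:dinvtable} together with the fact that two labels in one column create no dinv. Let $\mu$ be read before $\gamma$ in $\wdiag(D)$, let $M:=a_\gamma(D)-a_\mu(D)\ge 0$, and let $\chi(P)\in\{0,1\}$ be the truth value of $P$. Then the number of diagonal inversions avoided by $\mu$ and $\gamma$ is
\[
\min\bigl(k,\ M+\chi(f_D(\gamma)\ge f_D(\mu))-\chi(\mu<\gamma)\bigr).
\]
Now let $\alpha$ be read before $\beta$, with $\{\alpha,\beta\}=\{\lambda,\lambda+1\}$, and let $\mu$ be read before both.
\begin{itemize}
\item Since $\lambda,\lambda+1$ are consecutive, $\chi(\mu<\alpha)=\chi(\mu<\beta)$. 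Monotonicity therefore reduces to $M_\beta+\chi(f_D(\beta)\ge f_D(\mu))\ge M_\alpha+\chi(f_D(\alpha)\ge f_D(\mu))$.
\item This is immediate if $M_\beta>M_\alpha$.
\item If $M_\beta=M_\alpha$, the left-to-right tie-break gives $f_D(\alpha)<f_D(\beta)$. That excludes the only bad configuration, $f_D(\beta)<f_D(\mu)\le f_D(\alpha)$.
\end{itemize}
The same formula gives nonnegativity of every term, which you need for the intermediate letters. The paper's bound in the well-definedness proof is stated per ordered pair, so it does not directly give this. The formula also gives your strict term in the swapped case. For $\mu=\lambda+1$ and $\gamma=\lambda$ we have $\chi(\mu<\gamma)=0$, and $M=0$ forces $f_D(\lambda)>f_D(\lambda+1)$, so the avoided count is at least $1$.
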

\begin{proof}
	Consider $\lambda \in [n-1]$ such that $\lambda$ appears before $\lambda+1$ in $\wdiag(D) = w_1\dots w_n$. In other words, $\lambda = w_i$ and $\lambda+1 = w_j$ for $1 \leq i < j \leq n$.\\
	We claim that the number of avoided diagonal inversions in $D$ between $\lambda+1 = w_j$ and $\mu := w_l$ for any $1 \leq l < i$ is greater than or equal to the avoided diagonal inversions between $\lambda=w_i$ and $\mu=w_l$. By construction, this would mean that in $D'$ the north step labelled $\lambda$ appears to the west of the step labelled $\lambda+1$. Equivalently, $\lambda$ appears before $\lambda+1$ in $\wrow(D')$.\\
	To prove the claim, we look separately at the avoided temporary diagonal inversions and diagonal inversion corrections.
	\begin{itemize}
		\item \ul{Temporary diagonal inversions}: the contribute of each pair of labels to this statistic can only be $0$ or $1$ by definition. Thus, it suffices to prove that if $\mu$ and $\lambda+1$ contribute to $\tdinv$ then $\mu$ and $\lambda$ do it too.\\
		Suppose that the pair $\mu$, $\lambda+1$ creates $\tdinv$. Then their relative position is described by one of the following cases in Appendix~\ref{app:dinvtable}, Table~\ref{tab:poss_tdinvs}: (A2) with\footnote{The additional condition is given by the fact that $\mu$ appears before $\lambda + 1$ in $\wdiag(D)$ under the assumptions of the claim.} $f_D(\lambda) > f_D(\mu)$, (B2) or (C2).\\
        Since $\lambda$ and $\lambda + 1$ are consecutive labels, this implies that also $\mu < \lambda$ must hold. Moreover, the fact that $\mu$ appears before $\lambda$ and $\lambda+1$ in $\wdiag(D)$ means that the labels $\mu$, $\lambda$ must also satisfy one of the cases: (A2) with $f_D(\lambda) > f_D(\mu)$, (B2) or (C2).\\
        This always implies that $\mu$ and $\lambda$ contribute to $\tdinv$.
		\item \ul{Diagonal inversion corrections} under the assumption that the labels in $\wdiag(D)$ appear in the order $\mu$, $\lambda$, $\lambda+1$, we get that $m_1 := a_{\lambda}(D) - a_{\mu}(D) \geq 0$, $m_2 := a_{\lambda + 1}(D) - a_{\lambda}(D)\geq 0$ and $a_{\lambda + 1}(D) - a_{\mu}(D) = m_1 + m_2$.\\
        By looking at each case in Appendix~\ref{app:dinvtable}, Table~\ref{tab:poss_dinvcorrs}, we observe that the $\dinvcorr$ contributes do not depend on the label values but only on the relative position of the north steps. Moreover, the $\dinvcorr$ contribute decreases when the value of $m$ increases.\\
        Thus, since $0 \leq m_1 \leq m_1 + m_2$, the only case in which $(\mu,\lambda + 1)$ creates more diagonal inversion corrections is when $m_2 = 0$ and $f_D(\lambda + 1) < f_D(\mu) < f_D(\lambda)$. But this would imply that $\lambda + 1$ appears before $\lambda$ in $\wdiag(D)$, contradicting our hypothesis. 
	\end{itemize}
	Since the claim holds for the partial statistics, it holds for their sum $\dinv$.
	\smallskip\\
	Notice that $\lambda$ and $\lambda+1$ can be exchanged in the claim and its proof and they would still hold. In fact, we never compare the values of $\lambda$ and $\lambda+1$ themselves, but only the their value with some other label $\mu$ (and in such case, $\mu < \lambda$ if and only if $\mu < \lambda+1$ and $\lambda < \mu$ if and only if $\lambda+1 < \mu$).
	\smallskip\\
	This shows the ``only if'' in the first statement. By contraposition, the ``if'' statement is also true.
	\medskip\\
	Finally, the second statement directly follows from the first one, since the descents of a word are exactly the labels $\lambda \in [n-1]$ such that $\lambda+1$ appears before $\lambda$.
\end{proof}

\begin{proof}[Proof of Corollary~\ref{cor:identitypmaj}]
	Using Theorem~\ref{thm:rationalMellit}, it is enough to prove that:
	\[
		\sum_{D\in \PF_{n,kn}}q^{\dinv(D)}t^{\area(D)}L_{n,\Des(\wdiag(D)^{-1})} = \sum_{D'\in \PF_{n,kn}}q^{\area(D')}t^{\pmaj(D')}L_{n,\Des(\wrow(D')^{-1})}.
	\]
Associating each summand via $\phi_{n}^k(D) = D'$, the result follows because the statistics are preserved by Theorem~\ref{thm:main_bijection} and the descents sets coincide by Lemma~\ref{lem:descents}.
\end{proof}
From Corollary~\ref{cor:identitypmaj} and the usual \emph{superization} (cf.\ \cite[Chapter~6]{Haglund-Book-2008}), we immediately deduce the following corollary. In the following, $\langle-,-\rangle$ is the Hall scalar product of symmetric functions, $e_\mu=e_{\mu_1}e_{\mu_2}\dots$ are the elementary symmetric functions, and $h_\nu=h_{\nu_1}h_{\nu_2}\dots$ are the (complete) homogeneous symmetric functions.
\begin{corollary}\label{cor:nablaken_eh_pmaj}
	Consider $n,k \in \mathbb{N} \setminus \{0\}$ and two compositions $\mu,\nu$ such that $|\mu| + |\nu| = n$. Then:
	\[
	\langle \nabla^k e_n, e_\mu h_\nu \rangle = \sum_{D \in \PF_{n,kn}(\mu;\nu)}q^{\area(D)}t^{\pmaj(D)}.
	\]
\end{corollary}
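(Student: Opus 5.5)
We deduce the statement from Corollary~\ref{cor:identitypmaj} by the standard superization argument of \cite[Chapter~6]{Haglund-Book-2008}, in the form used in \cite{DDILLV}. Write
\[
\nabla^k e_n=\sum_{D\in \PF_{n,kn}}q^{\area(D)}t^{\pmaj(D)}L_{n,\Des(\wrow(D)^{-1})}.
\]
Pairing with $e_\mu h_\nu$ is linear, so it suffices to understand $\langle F, e_\mu h_\nu\rangle$ when $F$ is a symmetric function written as a sum of fundamental quasisymmetric functions. The key known fact is the following. Fix a permutation-indexed expansion $F=\sum_{\tau} c_\tau L_{n,\Des(\tau^{-1})}$ which is symmetric. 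Then $\langle F, e_\mu h_\nu\rangle=\sum c_\tau$, where the sum runs over those $\tau$ belonging to the shuffle set $W(\mu;\nu)$.

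To justify this, we first use that for symmetric $F$ the coefficient $\langle F,h_\nu\rangle$ is the coefficient of $x^\nu$. We then use the superization: $\omega$ applied in a second alphabet turns $h$ into $e$. In terms of fundamentals, one substitutes the super-fundamental $\tilde L_{n,S}(X;Y)$ and extracts the coefficient of $x^\nu y^\mu$.

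Concretely, the plan is as follows.

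First, express $\langle F, e_\mu h_\nu\rangle$ as the coefficient of $\prod_i y_i^{\mu_i}\prod_j x_j^{\nu_j}$ in the superization $\tilde F[X-\epsilon Y]$. By symmetry in each alphabet, we may order the letters so that the $y$-letters are "large" and the $x$-letters are "small". This is why the sets $K_{\mu_i}$ use the largest values and $I_{\nu_j}$ the smallest.

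Second, compute this coefficient for a single $\tilde L_{n,\Des(\tau^{-1})}$. A superized monomial of content $(\nu;\mu)$ compatible with the descent set exists, and is unique, exactly when the positions of the values are compatible in a specific way. The values in each block $I_{\nu_j}$ must appear in $\tau$ in decreasing order, which forces equal $x$-letters to sit at non-descents of $\tau^{-1}$. The values in each block $K_{\mu_i}$ must appear in increasing order, which forces equal $y$-letters to sit at descents. This is precisely the condition $\tau\in W(\mu;\nu)$, and each such $\tau$ contributes exactly $1$. Equivalently, the coefficient is the number of standardizations. This is the content of \cite[Chapter~6]{Haglund-Book-2008} and of the analogous corollary in \cite{DDILLV}, and we quote it.

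Third, apply this with $\tau=\wrow(D)$ and $c_\tau$ replaced by the weights $q^{\area(D)}t^{\pmaj(D)}$. The surviving terms are exactly those with $D\in\PF_{n,kn}(\mu;\nu)$, which gives the claimed formula.

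The main obstacle is bookkeeping: checking that the conventions match. We need to confirm that the "increasing for $K$ / decreasing for $I$" convention in $W(\mu;\nu)$ corresponds to the descent set $\Des(\wrow(D)^{-1})$, not to $\Des(\wrow(D))$. We also need the block order, with $K$ built from the top values, to agree with the reversal in the superized fundamental. I would verify this on $n=2$, where $\langle\nabla^k e_2,e_2\rangle$ and $\langle\nabla^k e_2,h_2\rangle$ can be computed by hand. Everything else follows formally from Corollary~\ref{cor:identitypmaj}.
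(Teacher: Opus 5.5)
Your route is the paper's, which just invokes superization on Corollary~\ref{cor:identitypmaj}. The trouble is in the one step you make explicit.

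With $L_{n,S}$ as defined in the paper (equal consecutive indices forbidden at $j\in S$), putting the same ordinary letter (the alphabet paired with $h$) on consecutive values $a,a+1$ requires $a\notin\Des(\tau^{-1})$. That means $a$ occurs \emph{before} $a+1$ in $\tau$. So $h$-blocks must appear increasingly in $\tau$ and $e$-blocks decreasingly, and your sentence ``decreasing order \dots forces equal $x$-letters to sit at non-descents of $\tau^{-1}$'' is backwards. As a result your key fact is false as stated. Take $F=s_2=L_{2,\varnothing}$, so $c_{12}=1$ and $c_{21}=0$. Then $\langle F,h_2\rangle=1$, while summing $c_\tau$ over $\tau\in W(\varnothing;(2))=\{21\}$ gives $0$. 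Correctly superized, $\langle \sum_\tau c_\tau L_{n,\Des(\tau^{-1})},e_\mu h_\nu\rangle$ is the sum over ${\downarrow}K\shuffle{\uparrow}I$, which is the opposite of $W(\mu;\nu)$.

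The corollary is nevertheless true. Read literally with the paper's definitions of $\wrow$, $\wdiag$ and $L_{n,S}$, the expansion in Corollary~\ref{cor:identitypmaj} (and in Theorem~\ref{thm:rationalMellit}) is an expansion of $\omega\nabla^k e_n$, not $\nabla^k e_n$. For $n=2$, $k=1$, the three parking functions give $(q+t)L_{2,\varnothing}+L_{2,\{1\}}=(q+t)s_2+s_{11}=\omega\nabla e_2$ in both formulas, whereas $\nabla e_2=s_2+(q+t)s_{11}$. So your two orientation slips cancel.

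The sound derivation starts from $\langle\nabla^k e_n,e_\mu h_\nu\rangle=\langle\omega\nabla^k e_n,h_\mu e_\nu\rangle$. Then apply the correct superization lemma, placing the $h_\mu$-letters on the large values $K$ and the $e_\nu$-letters on the small values $I$. This forces each $K_{\mu_i}$ to appear increasingly and each $I_{\nu_j}$ decreasingly in $\wrow(D)$, which is exactly the condition $\wrow(D)\in W(\mu;\nu)$.

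The $n=2$ check you propose, run only on the final formula, passes, so it would not expose either slip. You need to test the lemma itself (for instance on $s_2$) and the input expansion.
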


\section{The sandpile model}\label{sec:sandpile}

In the second part of this work we extend the study of~\cite{DDILLV} on the sandpile model on \emph{simple} graphs by allowing multiple edges between the same vertices.

\begin{definition}
    Let $G = (V,E)$ be a finite loop-free undirected graph where the \emph{vertex set} is $V = \{0,1,2,\dots,n\}$ and $E$ is the \emph{edge multiset}, allowing for multiple edges between the same two vertices.
\end{definition}

A \emph{configuration} on the graph $G$ is a map $c: V \to \Z$ that assigns to every vertex a number of ``grains of sand''.\\
For any vertex $v \in V$ we say it is \emph{non-negative} if $c(v) \geq 0$. If $c(v) < \deg_G(v)$ we say that it is \emph{stable}, otherwise it is called \emph{unstable}.

Any vertex $v \in V$ can be \emph{toppled} (or \emph{fired}): intuitively, it ``donates'' its grains of sand to its neighbors, accordingly to the multiplicity of edges. Let $\epsilon(v,w)$ be the multiplicity of the edge $\{v,w\}$ in $E$. The result of a toppling of $v$ on a configuration $c$ is a new configuration $c'$ such that $c'(v) = c(v) - \deg_G(v)$ and for all $w \in \tilde{V}$ with $w \neq v$ we have $c'(w) = c(w) + \epsilon(v,w)$.\\
To keep track of the toppled vertex, we denote the new configuration $\phi_v(c) = c'$.

Let $0 \in V$ always be a distinguished vertex that we will call \emph{sink}. The others will be the \emph{non-sink} vertices and their set will be denoted $\tilde{V} := V \setminus \{1,2,\dots,n\}$.\\
We say that a configuration $c$ is \emph{non-negative} if its non-sink vertices are non-negative. Likewise, a configuration is \emph{stable} when its non-sink vertices are stable.
\begin{remark}
    Since we do not care about the value of the sink $0$ in a configuration, we can ignore it. Thus, from now on we will consider configurations restricted on the non-sink vertices of $G$.
\end{remark}

\definecolor{mycolor1}{RGB}{255,0,0}
\definecolor{mycolor2}{RGB}{120,200,30}
\definecolor{mycolor3}{RGB}{0,180,180}
\definecolor{mycolor4}{RGB}{128,0,255}

\begin{example}\label{exa:toppling}
    Consider the graph $G$ in Figure~\ref{fig:graph_fam} where the vertices are labelled $\{0\} \cup [13]$, the sink is $0$ and the multiplicities are shown using different line styles: continuous, dashed and dotted lines indicate respectively multiplicities $2$, $1$ and $1$ (c.f.~we follow the general convention introduced in Example~\ref{exa:conv_lines}).
    \begin{figure}[ht]
        \centering
        \begin{tikzpicture}
            \sandpile{2}{2}{4,1}{2,6}{{\color{mycolor1}$v_4^{\mu_1}$},{\color{mycolor1}$v_3^{\mu_1}$},{\color{mycolor1}$v_2^{\mu_1}$},{\color{mycolor1}$v_1^{\mu_1}$},{\color{mycolor2}$v_1^{\mu_2}$},{\color{mycolor3}$v_2^{\nu_2}$},{\color{mycolor3}$v_1^{\nu_2}$},{\color{mycolor4}$v_6^{\nu_1}$},{\color{mycolor4}$v_5^{\nu_1}$},{\color{mycolor4}$v_4^{\nu_1}$},{\color{mycolor4}$v_3^{\nu_1}$},{\color{mycolor4}$v_2^{\nu_1}$},{\color{mycolor4}$v_1^{\nu_1}$}}{13}{2}
        \end{tikzpicture}
        \caption{Representation of the graph $G_{\mu,\nu}^{(k)}$ where $\mu = ({\color{mycolor1}4},{\color{mycolor2}1})$, $\nu = ({\color{mycolor4}6},{\color{mycolor3}2})$ and $k = 3$.}\label{fig:graph_fam}
    \end{figure}\\
    Throughout, we will write down configurations on $G$ by listing the values clockwise: $c(13), c(12), \dots, c(1)$.\\
    For example, a non-negative stable configuration is $c = {\color{mycolor1} 6}\ {\color{mycolor1} 17}\ {\color{mycolor1} 18}\ {\color{mycolor1} 24}\ {\color{mycolor2} 2}\ {\color{mycolor3} 17}\ {\color{mycolor3} 14}\ {\color{mycolor4} 18}\ {\color{mycolor4} 18}\ {\color{mycolor4} 15}\ {\color{mycolor4} 12}\ {\color{mycolor4} 7}\ {\color{mycolor4} 2}$.
    Here, we list some topplings applied to the configuration $c$ of $G$:
    \begin{align*}
                       \phi_0(c) &= {\color{mycolor1} 7}\ {\color{mycolor1} 18}\ {\color{mycolor1} 19}\ {\color{mycolor1} 25}\ {\color{mycolor2} 3}\ {\color{mycolor3} 18}\ {\color{mycolor3} 15}\ {\color{mycolor4} 19}\ {\color{mycolor4} 19}\ {\color{mycolor4} 16}\ {\color{mycolor4} 13}\ {\color{mycolor4} 8}\ {\color{mycolor4} 3},\\
              \phi_{10}\phi_0(c) &= {\color{mycolor1} 9}\ {\color{mycolor1} 20}\ {\color{mycolor1} 21}\ {\color{mycolor1} 0}\ {\color{mycolor2} 5}\ {\color{mycolor3} 20}\ {\color{mycolor3} 17}\ {\color{mycolor4} 21}\ {\color{mycolor4} 21}\ {\color{mycolor4} 18}\ {\color{mycolor4} 15}\ {\color{mycolor4} 10}\ {\color{mycolor4} 5},\\
        \phi_6\phi_{10}\phi_0(c) &= {\color{mycolor1} 11}\ {\color{mycolor1} 22}\ {\color{mycolor1} 23}\ {\color{mycolor1} 2}\ {\color{mycolor2} 7}\ {\color{mycolor3} 22}\ {\color{mycolor3} 19}\ {\color{mycolor4} 1}\ {\color{mycolor4} 20}\ {\color{mycolor4} 19}\ {\color{mycolor4} 16}\ {\color{mycolor4} 11}\ {\color{mycolor4} 6}.
    \end{align*}
\end{example}
As in~\cite{DDILLV}, we will use the following (not standard) definition of recurrent configuration.
\begin{definition}\label{def:rec_conf}
    A configuration $c$ on $G$ is \emph{recurrent} if it is non-negative, stable and if there exists a visiting order of all vertices, starting at the sink $0$, such that applying consequently topplings on vertices in said order gives a sequence of non-negative configurations.\\
    In other words, there exists an indexing $\sigma_0=0, \sigma_1, \sigma_2, \dots, \sigma_n$ of all vertices of $G$ such that
    \[
            c,\;\; \phi_0(c),\;\; \phi_{\sigma_1}\phi_{0}(c),\;\; \phi_{\sigma_2}\phi_{\sigma_1}\phi_{0}(c),\;\; \dots,\;\; \phi_{\sigma_n}\dots\phi_{\sigma_1}\phi_{0}(c) = c \text{ are non-negative.}
    \]
    Observe that toppling every vertex once gives back the original configuration, since through each multiedge the same ``number of grains'' will travel in both directions.\\
    Finally, denote $\Rec(G)$ the set of all recurrent configurations of $G$.
\end{definition}

\begin{example}\label{exa:rec_conf}
    The configuration $c = {\color{mycolor1} 6}\ {\color{mycolor1} 17}\ {\color{mycolor1} 18}\ {\color{mycolor1} 24}\ {\color{mycolor2} 2}\ {\color{mycolor3} 17}\ {\color{mycolor3} 14}\ {\color{mycolor4} 18}\ {\color{mycolor4} 18}\ {\color{mycolor4} 15}\ {\color{mycolor4} 12}\ {\color{mycolor4} 7}\ {\color{mycolor4} 2}$ from Example~\ref{exa:toppling} is recurrent. The reader can check that the reading order $\sigma = 10\ 6\ 5\ 4\ 11\ 12\ 8\ 7\ 3\ 13\ 9\ 1$ works by continuing the sequence of topplings shown in the previous example and observing that at each step the resulting configuration is non-negative.
\end{example}

\begin{remark}\label{rem:reconf}
    Let $c$ be a recurrent configuration. Topple the sink and then $m < n$ distinct non-sink vertices. The resulting configuration will have an unstable vertex between those which were not toppled.
\end{remark}

From now on fix two compositions $\mu = (\mu_1,\mu_2,\dots,\mu_{\ell(\mu)})$ and $\nu = (\nu_1,\nu_2,\dots,\nu_{\ell(\nu)})$ be such that $n = |\mu| + |\nu|$ and a positive integer $k \in \N \setminus \{0\}$.\\
We will study recurrent configurations on a particular family of graphs indexed on these parameters.

\begin{definition}\label{def:graph_fam}
    We construct the graph $G_{\mu,\nu}^{(k)}$ by taking:
    \begin{itemize}
        \item $\ell(\mu)$ \emph{$k$-clique components}, which are distinct complete multigraphs $K_1,\dots,K_{\ell(\mu)}$ on respectively $\mu_1,\dots,\mu_{\ell(\mu)}$ vertices and edges of multiplicity $k$.
        \item $\ell(\nu)$ \emph{$(k-1)$-clique components}, which are distinct complete multigraphs $H_1,\dots,H_{\ell(\nu)}$ on respectively $\nu_1,\dots,\nu_{\ell(\nu)}$ vertices and edges of multiplicity $k-1$.
    \end{itemize}
    Then, connect with an edge of multiplicity $k$ every pair of vertices in different components.\\
    The vertices are indexed on $\{1,\dots,n\}$, numbering components in the order $H_1$, $H_2$, $\dots$, $H_{\ell(\nu)}$, $K_{\ell(\mu)}$, $\dots$, $K_2$, $K_1$. Finally, add an edge of multiplicity $1$ from every vertex to a new vertex $0$ which will be the sink.
\end{definition}

\begin{example}\label{exa:conv_lines}
    Consider compositions $\mu = (4,1)$ and $\nu = (6,2)$, such that $n = |\mu| + |\nu| = 13$, and $k = 2$. Then the graph $G_{\mu,\nu}^{(k)}$ is actually $G$ from Example~\ref{exa:toppling}, Figure~\ref{fig:graph_fam}.\\
    The different colors encode the components defined by $\mu$ and $\nu$ and the line styles represent edge multiplicity as follows:
    \begin{itemize}
        \item edge of multiplicity $k$: continuous line ``$\vcenter{\hbox{\begin{tikzpicture}
            \draw[red,thick] (0,0) -- (1,0);
        \end{tikzpicture}}}$''
        (or any other color) and
        ``$\vcenter{\hbox{\begin{tikzpicture}
            \draw[] (0,0) -- (1,0);
        \end{tikzpicture}}}$'',
        \item edge of multiplicity $k-1$: dashed line ``$\vcenter{\hbox{\begin{tikzpicture}
            \draw[red,thick,dashed] (0,0) -- (1,0);
        \end{tikzpicture}}}$'' (or any other color),
        \item edge of multiplicity $1$: dotted line ``$\vcenter{\hbox{\begin{tikzpicture}
            \draw[thick,dotted] (0,0) -- (1,0);
        \end{tikzpicture}}}$''.
    \end{itemize}
\end{example}

\begin{remark}
    In the case $k = 1$, the family of graphs $\big\{G_{\mu,\nu}^{(1)}\big\}_{\mu,\nu}$ coincides with  the family $\big\{\hat{G}_{\mu,\nu}\big\}_{\mu,\nu}$ studied in~\cite{DDILLV}.
\end{remark}

Consider the graph $G_{(1^n),\varnothing}^{(k)}$, i.e.~the ``$k$-complete'' graph on $n$ vertices to which we add the sink $0$ connected with multiplicity $1$ to all other vertices. Finding the recurrent configurations is straightforward.

\begin{lemma}\label{lem:rec_kcomplete}
    Consider a stable non-negative configuration $c$ on the graph $G = G_{(1^n),\varnothing}^{(k)}$.\\
    Then $c$ is recurrent if and only if the increasingly ordered sequence of values of $c$ is pointwise equal to or greater than the string $0,k,2k,\dots,(n-1)k$.
\end{lemma}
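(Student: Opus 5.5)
The plan is to work directly with Definition~\ref{def:rec_conf}, exploiting the fact that all non-sink vertices of $G=G_{(1^n),\varnothing}^{(k)}$ are pairwise joined by edges of multiplicity $k$. Every non-sink vertex therefore has degree $k(n-1)+1$. Toppling the sink adds one grain to every non-sink vertex. Toppling a non-sink vertex $v$ adds $k$ grains to each other non-sink vertex and sets $c(v)\mapsto c(v)-k(n-1)-1$. Consequently, after toppling the sink and a set $S$ of $j$ distinct non-sink vertices, a not-yet-toppled vertex $w$ holds exactly $c(w)+1+kj$ grains. The condition that the next toppled vertex $w$ stays non-negative therefore reads
\[
c(w)+1+kj-\bigl(k(n-1)+1\bigr)\geq 0, \qquad\text{i.e.}\qquad c(w)\geq k(n-1-j).
\]
Vertices already toppled are never decreased again, and they start from a non-negative value after their own toppling, so they remain non-negative. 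Hence recurrence is equivalent to the existence of an ordering $\sigma_1,\dots,\sigma_n$ of the non-sink vertices such that $c(\sigma_{j+1})\geq k(n-1-j)$ for every $j=0,\dots,n-1$.

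\emph{Sufficiency.} Suppose the increasingly sorted values $c_{(1)}\leq\dots\leq c_{(n)}$ satisfy $c_{(i)}\geq (i-1)k$. Topple the vertices in decreasing order of their values, so that $\sigma_{j+1}$ carries the value $c_{(n-j)}$. Then $c(\sigma_{j+1})=c_{(n-j)}\geq (n-j-1)k$, which is exactly the required condition.

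\emph{Necessity.} Given any valid ordering, the vertices $\sigma_{j+1},\dots,\sigma_n$ all satisfy $c\geq k(n-1-j')\geq 0$ for their respective $j'\geq j$. More usefully, consider the $n-j$ vertices $\sigma_1,\dots,\sigma_{n-j}$: each has value at least $k(n-1-(n-j-1))=kj$. So at least $n-j$ vertices have value $\geq kj$, which gives $c_{(j+1)}\geq kj$ for every $j$. This is precisely the pointwise comparison with $0,k,\dots,(n-1)k$.

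The main point requiring care is the bookkeeping of grains: the sink gives exactly one grain to each vertex, and each toppled non-sink vertex gives exactly $k$. Once the threshold inequality above is established, both directions are a standard greedy/sorting argument.
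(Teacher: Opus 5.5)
Your proof is correct and follows the paper's argument: the same grain count $c(w)+1+kj$ for the next vertex to be toppled, toppling in decreasing order of values for sufficiency, and the per-step threshold $c(\sigma_{j+1})\geq k(n-1-j)$ for necessity. Your counting step (at least $n-j$ vertices have value $\geq kj$, hence $c_{(j+1)}\geq kj$) just makes explicit the ``re-ordering'' step that the paper leaves to the reader.
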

\begin{proof}
    Suppose that $i_1,i_2,\dots,i_n$ is the order of $\tilde{V} = [n]$ in which the values $c(i_1)$, $c(i_2)$, $\dots$, $c(i_n)$ are (weakly) increasing.\\
    If $c$ satisfies the pointwise inequality, then the toppling order $i_n, i_{n-1}, \dots, i_1$ verifies the recurrent property for $c$. In fact, after the toppling of the sink $0$ the ordered values of the configuration are greater or equal then $1,k+1,2k+1,\dots,(n-1)k+1$.\\
    When we topple $i_j$, that vertex has already received $(n-j)k+1$ grains by the previous topplings, so its value is $c(i_j) + (n-j)k+1 \geq (n-1)k+1 = \deg_G(i_j)$. Thus, after its topple the configuration is still non-negative.\\
    Conversely, suppose $c$ is recurrent. Suppose $i_1',i_2',\dots,i_n'$ is the toppling sequence that shows recurrence. After toppling $0$ in $c$, we have that $c(i_1')+1 \geq \deg_G(i_1') = (n-1)k + 1$, thus $c(i_1') \geq (n-1)k$. Now, after toppling $0$ and $i_1'$, the recurrence implies that $c(i_2') + 1 + k \geq (n-1)k + 1$ which means $c(i_2') \geq (n-2)k$. In general for all $j \in [n]$ the recurrence property implies
    \[
        c(i_j') + 1 + (j-1)k \geq (n-1)k + 1 \qquad \text{and thus} \qquad c(i_j') \geq (n-j)k.
    \]
    Re-ordering the values of $c$ in increasing order, the pointwise inequality of the theorem must hold. This concludes the proof.
\end{proof}

\begin{example}\label{exa:gen_lemma_increasing}
    Consider the graph $G = G_{(1^6),\varnothing}^{(3)}$ and the stable non-negative configurations $c = 3\ 15\ 14\ 8\ 14\ 2$ and $c' = 3\ 15\ 11\ 8\ 11\ 2$.
    In Table~\ref{tab:stableconf_1n}, we write the values correspondence $i \leftrightarrow c(i)$ but with columns reordered so that the values $c(i_j)$ in the second row are increasing.
    \begin{table}
        \begin{tabular}{r|cccccccr|cccccc}
            $i_j$ & 1 & 2 & 3 & 4 & 5 & 6 & $\qquad$ & $i_j$ & 6 & 1 & 4 & 3 & 5 & 2 \\
            $c(i)$ & 2 & 3 & 8 & 14 & 14 & 15 & & $c'(i_j)$ & 2 & 3 & 8 & 11 & \textbf{\color{red}11} & 15 \\
            $(j-1)k$ & 0 & 3 & 6 & 9 & 12 & 15 & & $(j-1)k$ & 0 & 3 & 6 & 9 & \textbf{\color{red}12} & 15
        \end{tabular}
        \caption{Examples of stable and non-stable configurations on $G_{(1^6),\varnothing}^{(3)}$}\label{tab:stableconf_1n}
    \end{table}\\
    Starting from configuration $c$ (on the left in Table~\ref{tab:stableconf_1n}), we can check that the toppling order $2$, $5$, $3$, $4$, $1$, $6$ works. As a matter of fact, the increasing values of the configuration are pointwise equal to or greater than the sequence $0$, $3$, $6$, $9$, $12$, $15$ (see the comparison between the second and third rows of the tables).\\
    Notice that the vertices of $G$ become unstable when their value is greater or equal than the number $(n-1)k + 1 = 16$, so the following topplings are all allowed:
    \[
        \begin{array}{rcccccc}
            c =                                             &  3 & 15 & 14 &  8 & 14 &  2 \\
            \phi_0(c) =                                     &  4 & \mathbf{\color{red}16} & 15 &  9 & 15 &  3 \\
            \phi_2\phi_0(c) =                               &  7 &  0 & {\color{red}18} & 12 & \mathbf{\color{red}18} &  6 \\
            \phi_5\phi_2\phi_0(c) =                         & 10 &  3 & \mathbf{\color{red}21} & 15 &  2 &  9 \\
            \phi_3\phi_5\phi_2\phi_0(c) =                   & 13 &  6 &  5 & \mathbf{\color{red}18} &  5 & 12 \\
            \phi_4\phi_3\phi_5\phi_2\phi_0(c) =             & \mathbf{\color{red}16} &  9 &  8 &  2 &  8 & 15 \\
            \phi_1\phi_4\phi_3\phi_5\phi_2\phi_0(c) =       &  0 & 12 & 11 &  5 & 11 & \mathbf{\color{red}18} \\
            \phi_6\phi_1\phi_4\phi_3\phi_5\phi_2\phi_0(c) = &  3 & 15 & 14 &  8 & 14 &  2
        \end{array}
    \]
    and $c$ is recurrent. In the example, the unstable vertices are highlighted in red and the bold ones are the vertices on which the toppling is performed.\smallskip\\
    However, by repeating the same argument with configuration $c'$, we notice that the increasing sequence $c'(i_j)$ is not pointwise greater than or equal to $0$, $3$, $6$, $9$, $12$, $15$: the inequality is not satisfied by $i_5 = 5$ since $11 = c'(5) = c(i_5) \not\geq 3(5 - 1) = 12$.\\
    Indeed, after toppling the sink $0$ and the first vertex of the sequence $i_j'$, there is no unstable vertex:
    \[
        \begin{array}{rcccccl}
            c =                                             &  3 & 15 & 11 &  8 & 11 &  2 \\
            \phi_0(c) =                                     &  4 & \mathbf{\color{red}16} & 12 &  9 & 12 &  3 \\
            \phi_2\phi_0(c) =                               &  7 &  0 & 15 & 12 & 15 &  6.
        \end{array}
    \]
    If there were one more grain on vertex $5$, i.e. $c'(5) = 12$ satisfied the pointwise inequality, then the vertex $5$ would have been unstable in $\phi_2\phi_0(c')$ and the configuration $c'$ recurrent.
\end{example}

\section{Sorted sandpiles and their statistics}\label{sec:sortedrec}

Given a composition $\mu=(\mu_1,\mu_2,\dots,\mu_r)$, set $\mu^\rev:=(\mu_r,\dots,\mu_2,\mu_1)$.

Consider the Young subgroup $\SG_\nu \times \SG_{\mu^{\rev}} < \SG_n$ acting on the configurations of $G_{\mu,\nu}^{(k)}$ by composition, i.e.~permuting in each component the values of every $c \in \Conf\big(G_{\mu,\nu}^{(k)}\big)$ by
\[
    \sigma \cdot c(i) := c(\sigma(i))
\]
for all $i \in [n]$ and $\sigma \in \SG_{\nu} \times \SG_{\mu^{\rev}}$. 

The orbits form equivalence classes of configurations on the graph $G_{\mu,\nu}^{(k)}$. Each class has a canonical representative: the unique configuration with increasing values in the $(k-1)$-components $\{H_j\}_j$ and decreasing values in the $k$-components $\{K_i\}_i$. Such representatives for stable non-negative configurations are called \emph{sorted configurations} and their set will be denoted $\Sort_k(\mu;\nu)$.

\begin{example}\label{exa:gen_sortconf}
    The configuration $c = {\color{mycolor1} 6}\ {\color{mycolor1} 17}\ {\color{mycolor1} 18}\ {\color{mycolor1} 24}\ {\color{mycolor2} 2}\ {\color{mycolor3} 17}\ {\color{mycolor3} 14}\ {\color{mycolor4} 18}\ {\color{mycolor4} 18}\ {\color{mycolor4} 15}\ {\color{mycolor4} 12}\ {\color{mycolor4} 7}\ {\color{mycolor4} 2}$ from Example~\ref{exa:toppling} is a sorted configuration on $G_{({\color{mycolor1}4},{\color{mycolor2}1}),({\color{mycolor4}6},{\color{mycolor3}2})}^{(2)}$.\\
    However, the configuration $c' := \phi_6\phi_{10}\phi_0(c) = {\color{mycolor1} 11}\ {\color{mycolor1} 22}\ {\color{mycolor1} 23}\ {\color{mycolor1} 2}\ {\color{mycolor2} 7}\ {\color{mycolor3} 22}\ {\color{mycolor3} 19}\ {\color{mycolor4} 1}\ {\color{mycolor4} 20}\ {\color{mycolor4} 19}\ {\color{mycolor4} 16}\ {\color{mycolor4} 11}\ {\color{mycolor4} 6}$ computed before is not, since ${\color{mycolor1}c'(11)} > {\color{mycolor1}c'(10)}$ and ${\color{mycolor4}c'(6)} < {\color{mycolor4}c'(5)}$.
\end{example}

Notice that the action of an element $\sigma \in \SG_\nu \times \SG_{\mu^{\rev}}$ only \emph{permutes} the values of a configuration $c$ of $G_{\mu,\nu}^{(k)}$. Thus, if $c$ is recurrent $\sigma \cdot c$ is recurrent as well and given a suitable toppling sequence $\tau \in \SG_n$ for $c$, the sequence $\sigma \circ \tau$ works for $\sigma \cdot c$.
This observation motivates the following definition.

\begin{definition}\label{def:sortrec}
    The \emph{sorted recurrent configurations} of a graph $G_{\mu,\nu}^{(k)}$ are the sorted configurations which are also recurrent. We will denote by $\SortRec_k(\mu;\nu)$ the set of such configurations.
\end{definition}

\begin{example}
    The configuration $c$ from Example~\ref{exa:toppling} is a sorted recurrent configuration of $G_{({\color{mycolor1}4},{\color{mycolor2}1}),({\color{mycolor4}6},{\color{mycolor3}2})}^{(2)}$.
\end{example}

\subsection{The statistic $\lev$}

The first statistic measures the ``degree'' of the configuration (i.e.~the sum of its values), up to a constant term depending only on the graph structure.

\begin{definition}\label{def:level}
    Consider a sorted recurrent configuration $c$ of $G_{\mu,\nu}^{(k)}$. Its \emph{level statistic} is:
    \begin{equation*}\label{eq:level}
        \lev(c) := -\big|E_0(G_{\mu,\nu}^{(k)})\big| + \sum_{i=1}^{n}c(i)
    \end{equation*}
    where $E_0\big(G_{\mu,\nu}^{(k)}\big)$ is the multiset of all edges (counted with multiplicity) not incident to the sink $0$.
\end{definition}

\begin{example}\label{exa:level}
    Consider the configuration $c = {\color{mycolor1} 6}\ {\color{mycolor1} 17}\ {\color{mycolor1} 18}\ {\color{mycolor1} 24}\ {\color{mycolor2} 2}\ {\color{mycolor3} 17}\ {\color{mycolor3} 14}\ {\color{mycolor4} 18}\ {\color{mycolor4} 18}\ {\color{mycolor4} 15}\ {\color{mycolor4} 12}\ {\color{mycolor4} 7}\ {\color{mycolor4} 2}$ from Example~\ref{exa:toppling}. In this case, $E_0(G_{({\color{mycolor1}4},{\color{mycolor2}1}),({\color{mycolor4}6},{\color{mycolor3}2})}^{(2)}) = 140$ so $\lev(c) = 170 - 140 = 30$.
\end{example}
It is well known that the level of a recurrent configuration of a connected graph is nonnegative, and that there is always at least one recurrent configuration of level $0$.

\subsection{The statistic $\del$}

One of the main contributions of the present article is the correct definition of the $\del$ statistic for sorted recurrent configurations on $G_{\mu,\nu}^{(k)}$ for $k\geq 2$. Indeed, it extends the one defined in~\cite{DDILLV} to graphs with multiedges. This definition is the one that inspired our definition of the $\pmaj$.
\smallskip

The key idea is to ``slow'' the release of the grains of sand when a vertex is toppled.

\begin{definition}\label{def:release}
    Consider a configuration $c$ on a graph $G$. Fix a non-negative integer $m \in \N$.
    A \emph{grain release} on vertex $v \in V$ with threshold $m$ sends $c$ into a new configuration $c'$ defined by:
    \begin{equation*}\label{eq:release}
        c'(w) := \begin{cases}
            c(v) - \#\{w \in V \setminus \{v\} \ | \ \epsilon(v,w) \geq m\} & \text{if $w = v$}\\
            c(w) - 1 & \text{if $w \neq v$ and $\epsilon(v,w) \geq m$}\\
            c(w) & \text{if $w \neq v$ and $\epsilon(v,w) < m$}
        \end{cases}
    \end{equation*}
    where $\epsilon(v,w)$ is the multiplicity of edge $vw$ in the graph $G$. In other words, we send one grain from $v$ through its incident edges that have multiplicity greater than or equal to the threshold $m$.\\
    To keep track the threshold and vertex of a grain release, we denote $c' = \psi_v^{(m)}(c)$.
\end{definition}

\begin{example}\label{exa:release}
    Take the configuration $\phi_0(c)$ on the graph $G_{({\color{mycolor1}4},{\color{mycolor2}1}),({\color{mycolor4}6},{\color{mycolor3}2})}^{(2)}$ from Example~\ref{exa:toppling}. By construction, the edges of vertex $10$ have all multiplicity $2$ except for the one connecting it to the sink. Therefore:
    \begin{align*}
        \phi_0(c) &= {\color{mycolor1} 7}\ {\color{mycolor1} 18}\ {\color{mycolor1} 19}\ {\color{mycolor1} 25}\ {\color{mycolor2} 3}\ {\color{mycolor3} 18}\ {\color{mycolor3} 15}\ {\color{mycolor4} 19}\ {\color{mycolor4} 19}\ {\color{mycolor4} 16}\ {\color{mycolor4} 13}\ {\color{mycolor4} 8}\ {\color{mycolor4} 3}\\
        \psi_{10}^{(2)}\phi_{0}(c) &= {\color{mycolor1} 8}\ {\color{mycolor1} 19}\ {\color{mycolor1} 20}\ {\color{mycolor1} 12}\ {\color{mycolor2} 4}\ {\color{mycolor3} 19}\ {\color{mycolor3} 16}\ {\color{mycolor4} 20}\ {\color{mycolor4} 20}\ {\color{mycolor4} 17}\ {\color{mycolor4} 14}\ {\color{mycolor4} 9}\ {\color{mycolor4} 4}\\
        \psi_{10}^{(1)}\psi_{10}^{(2)}\phi_{0}(c) &= {\color{mycolor1} 9}\ {\color{mycolor1} 20}\ {\color{mycolor1} 21}\ {\color{mycolor1} 0}\ {\color{mycolor2} 5}\ {\color{mycolor3} 20}\ {\color{mycolor3} 17}\ {\color{mycolor4} 21}\ {\color{mycolor4} 21}\ {\color{mycolor4} 18}\ {\color{mycolor4} 15}\ {\color{mycolor4} 10}\ {\color{mycolor4} 5} = \phi_{10}\phi_0(c)
    \end{align*}
    and notice that at the grain release $\psi_{10}^{(1)}$ we reduce the grains on $10$ by $12$ and not $13$, because no grain is sent to the sink.\\
    It is more interesting to look at grain releases on the vertex $6$, since it is connected with multiplicity $1$ to vertices $0$, $1$, $2$, $3$, $4$ and $5$ and with multiplicity $2$ to the remaining. In this case:
    \begin{align*}
        \psi_6^{(2)}\phi_{10}\phi_{0}(c) &= {\color{mycolor1} 10}\ {\color{mycolor1} 21}\ {\color{mycolor1} 22}\ {\color{mycolor1} 1}\ {\color{mycolor2} 6}\ {\color{mycolor3} 21}\ {\color{mycolor3} 18}\ {\color{mycolor4} 8}\ {\color{mycolor4} 22}\ {\color{mycolor4} 19}\ {\color{mycolor4} 16}\ {\color{mycolor4} 11}\ {\color{mycolor4} 6}\\
        \psi_6^{(1)}\psi_6^{(2)}\phi_{10}\phi_{0}(c) &= {\color{mycolor1} 11}\ {\color{mycolor1} 22}\ {\color{mycolor1} 23}\ {\color{mycolor1} 2}\ {\color{mycolor2} 7}\ {\color{mycolor3} 22}\ {\color{mycolor3} 19}\ {\color{mycolor4} 1}\ {\color{mycolor4} 22}\ {\color{mycolor4} 19}\ {\color{mycolor4} 16}\ {\color{mycolor4} 11}\ {\color{mycolor4} 6} = \phi_6\phi_{10}\phi_0(c).
    \end{align*}
    By comparing with the results from Example~\ref{exa:toppling}, we see that applying $\psi_v^{(1)}\psi_v^{(2)}$ in these cases is equivalent to toppling the vertex $v$. In the following remark we state this general observation.
\end{example}

\begin{remark}\label{rem:rel_toppl}
    For any configuration $c$ on a graph $G$, consider a vertex $v$ and let $h$ be the maximum of the multiplicity of its outgoing edges. We can rewrite the toppling of $v$ in terms of grain releases as:
    \[
        \phi_v(c) = \psi_v^{(1)} \circ \psi_v^{(2)} \circ \dots \psi_v^{(h-1)} \circ \psi_v^{(h)} (c).
    \]
\end{remark}

The grain release allows to precisely control the movement of grains in Algorithm~\ref{alg:toppl}.

\begin{algorithm}
    \setstretch{1.1}        
    \RestyleAlgo{ruled}     
    \caption{Toppling algorithm with ``slow-release''}\label{alg:toppl}
    \KwIn{A recurrent configuration $c$ of $G_{\mu,\nu}^{(k)}$}
    \KwOut{A word $\wtopp = w_1w_2\dots w_{nk}$ of length $nk$ recording each grain release and an integer value $d$}
    \smallskip
    Initialize the word $\wtopp$ as empty\\
    Initialize the integer values $d = 0$ and $n_{\text{cycles}} = 0$\\
    Initialize a $n$-tuple, the release record $\mathbf{r} = (r_1,\dots,r_n)$ with all zero entries\\
    $c \gets \phi_0(c)$\Comment{Topple the sink $0$}\\
    \While{$\mathbf{r} \neq (k,k,\dots,k)$}{
        \For(\Comment{Call each iteration \emph{visit} of vertex $i$}){$i$ from $n$ to $1$ in decreasing order}{
            \uIf(\Comment{First release from vertex $i$}){$r_i = 0$ and $i$ is unstable}{
                $d \gets d + n_{\text{cycles}}$\Comment{Register the number of cycles before first release on $i$}\\
                $c \gets \psi_i^{(k-r_i)}(c)$\Comment{Perform a grain release on $i$ with threshold $0$}\\
                $r_i \gets r_i + 1$\\
                Append $i$ to the word $\wtopp$
            }
            \ElseIf(\Comment{Another release from vertex $i$}){$0 < r_i < k$}{
                $c \gets \psi_i^{((k-1)-r_i)}(c)$\Comment{Perform a grain release on $i$ with threshold $r_i$}\\
                $r_i \gets r_i + 1$\\
                Append $i$ to the word $\wtopp$
            }
        }
        $n_{\text{cycles}} \gets n_{\text{cycles}} + 1$
    }
\end{algorithm}

\begin{example}\label{exa:alg_toppl}
    We refer to Appendix~\ref{app:comp_algo} to show record tables for each slow-release performed by Algorithm~\ref{alg:toppl} on the configurations $c$ and $\tilde{c}$ that will be defined in Example~\ref{exa:tildemap}.\\
    In both tables, the first column records the vertex on which we apply a slow-release and the other ones keep track of the values of the configuration on all $13$ vertices.
\end{example}

Some properties of this algorithm need to be discussed.

\begin{proposition}\label{prop:alg_toppl}
    For each recurrent configuration $c$ of $G_{\mu,\nu}^{(k)}$:
    \begin{enumerate}[label=(\alph*)]
        \item Algorithm~\ref{alg:toppl} terminates,
        \item $\wtopp$ is a word of length $nk$ and each letter $i \in [n]$ occurs exactly $k$ times,
        \item\label{it:algo3} each completion of the \textbf{while} cycle (other than the last one) is represented in $\wtopp$ by a weak ascent\footnote{Weak ascents are indexes $h \in [nk - 1]$ such that $w_h \leq w_{h+1}$.}. 
    \end{enumerate}
\end{proposition}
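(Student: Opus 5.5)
The plan is to handle the three items in the order (a), (b), (c). Throughout, I read the thresholds used by Algorithm~\ref{alg:toppl} as being, for a fixed vertex $i$, successively $k,k-1,\dots,1$ over its $k$ releases. With this reading, Remark~\ref{rem:rel_toppl} tells us that once $r_i=k$ the vertex $i$ has been toppled exactly once. Two facts follow directly from the code. A vertex with $0<r_i<k$ releases at every visit. A vertex with $r_i=0$ releases exactly when it is unstable at the moment it is visited. Hence at most $k$ releases occur on each vertex, and the \textbf{while} loop ends precisely when every vertex has released $k$ times.

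For (a), it suffices to show that every pass of the \textbf{while} loop before the last one performs at least one release. Suppose some pass performs none. Then no vertex has $0<r_i<k$, so every vertex is either untouched ($r_i=0$) or fully toppled ($r_i=k$). The current configuration is therefore $c$ after toppling the sink and a proper subset of $m<n$ distinct non-sink vertices, each exactly once. By Remark~\ref{rem:reconf}, some untouched vertex is unstable. It was visited during this pass and would have released, which is a contradiction. So the total number of releases strictly increases at each pass and is bounded by $nk$; the algorithm terminates. Item (b) is then immediate: each release appends its vertex once, each vertex releases exactly $k$ times before $\mathbf r=(k,\dots,k)$, so $\wtopp$ has length $nk$ and each letter occurs $k$ times.

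For (c), the for loop visits $n,n-1,\dots,1$, so the letters appended within one pass strictly decrease. Hence the only weak ascents can occur at the junctions between passes, and we must show that the last letter $j$ of pass $t$ is at most the first letter of pass $t+1$. The first letter of pass $t+1$ is the largest vertex releasing in that pass, so it is enough to exhibit a vertex $\ge j$ releasing in pass $t+1$. There are two cases.
\begin{itemize}
\item If $r_j<k$ after pass $t$, then $j$ itself releases again in pass $t+1$.
\item Otherwise, first suppose some vertex $i$ has $0<r_i<k$ after pass $t$. It must satisfy $i>j$: a vertex $i<j$ in this state would have released in pass $t$ after $j$, contradicting the choice of $j$. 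Such an $i$ releases in pass $t+1$. If instead no vertex is in this intermediate state, we are again in the situation ``sink plus a proper subset fully toppled'' (the pass is not the last), and Remark~\ref{rem:reconf} gives an untouched vertex $i$ which is unstable at the end of pass $t$. If $i<j$, note that no release happened after the visit of $j$ in pass $t$, and $i$ was visited later. So the configuration at vertex $i$ was already the final one when $i$ was visited, $i$ was unstable then, and it would have released, contradicting that $j$ was last. Hence $i>j$ ($i\neq j$ since $j$ was touched), and $i$ releases at the start of pass $t+1$.
\end{itemize}

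The delicate points are making the ``fully toppled subset'' reduction rigorous, since Remark~\ref{rem:rel_toppl} is needed to identify $k$ slow releases with a single toppling, and tracking that no release occurs after the last letter of a pass. That last observation is what rules out the bad case $i<j$ in (c).
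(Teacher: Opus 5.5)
Your proof is correct and follows essentially the paper's route: both arguments reduce to the state where every $r_i\in\{0,k\}$, identify it (via Remark~\ref{rem:rel_toppl}) with toppling the sink and a proper subset of the vertices, and then use Remark~\ref{rem:reconf} to force a release. The paper states this once as a claim, that $n$ consecutive release-free visits force $\mathbf r=(k,\dots,k)$, and (c) follows at once because $w_m>w_{m+1}$ would create such a window; your case split at the junction of passes $t$ and $t+1$ is that claim unrolled. One small point in your last case: $i$ stays unstable until its visit in pass $t+1$ because releases of other vertices only add grains to it.
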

\begin{proof}[Proof of Proposition~\ref{prop:alg_toppl}]
    Notice that at most $k$ releases are performed on each non-sink vertex by design. Moreover, releases are only performed once the vertex becomes unstable, i.e.~it can be toppled once. Recall that by Remark~\ref{rem:rel_toppl} this is equivalent to performing $k$ releases with thresholds $k,k-1,\dots,2,1$. Thus, after each release the configuration remains non-negative.\smallskip\\
    All the desired properties of Algorithm~\ref{alg:toppl} follow from the next observation:
    \begin{center}
        \ul{Claim}: if during the execution of the algorithm we have $n$ consecutive visits without release, then $\mathbf{r} = (k,k,\dots,k)$.
    \end{center}
    Suppose that the $n$ consecutive visits without releases, say to vertices $i, i-1, \dots,1, n, n-1\dots, i+1$, happen while the current configuration is $c'$.
    This means that the current release record $\mathbf{r}$ has only values $0$ or $k$ because if $0 < r_j < k$ for some $j \in [n]$ then a release would have been performed when $j$ is visited.\\
    By Remark~\ref{rem:rel_toppl}, this means that $c'$ was obtained from $c$ by toppling the sink and those vertices $j \in \tilde{V}$ such that $r_j = k$.\\
    Moreover, all vertices $j$ such that $r_j = 0$ must be stable since there is no release. If $\mathbf{r} \neq (k,k,\dots,k)$ this would contradict the fact that $c$ is a recurrent configuration (see Remark~\ref{rem:reconf}), since we would have that after toppling a strict subset of all vertices, the remaining ones are all stable.\smallskip\\
    From this claim, we deduce the desired properties.
    \begin{enumerate}[label=(\alph*)]
        \item Suppose that the algorithm does not terminate: this happens only if the \textbf{while} loop is never exited, thus infinitely many visits are performed on the vertices of the graph.\\
        By design there must be at most $nk$ visits with release. Thus there will eventually be $n$ consecutive visits without any release. By the previous \ul{claim}, this means that $\mathbf{r} = (k,k,\dots,k)$ which contradicts the fact that the algorithm never leaves the \textbf{while} loop.
        \item Recall that the algorithm exits the \textbf{while} loop when $\textbf{r} = (k,\dots,k)$. By design, the entry $r_i$ increments by $+1$ if and only if a new letter $i$ is added to the word $\wtopp$. Therefore $\wtopp$ has $r_1 + r_2 + \dots + r_n = nk$ letters and each $i \in [n]$ appears exactly $r_i = k$ times.
        \item When a \textbf{while} cycle other than the last is completed, $\mathbf{r} \neq (k,k,\dots,k)$. By \ul{claim}, this means that in the new \textbf{while} cycle some vertex visit will trigger a release.\\
        Let $w_1\dots w_m$ be the state of the word $\wtopp$ at the start of the new cycle: the first vertex visited with release in the new iteration will be $w_{m+1}$.\\
        Suppose $w_{m} > w_{m+1}$: this means that between the corresponding visits with release there were at least $n$ consecutive visits without release (recall that we visit vertices in decreasing order), which is a contradiction. Therefore each \textbf{while} cycle completion corresponds to a weak ascent.\\
        On the other hand, if there is a weak ascent in the word $\wtopp$, this means that a \textbf{while} cycle has been completed between the two visits with release because at each iteration vertices are visited in decreasing order.
    \end{enumerate}
    This concludes the proof.
\end{proof}

\begin{remark}
    For $k = 1$ a release with threshold $m = 1$ coincides with the toppling move on the same vertex, since the multiplicity of edges is always $1$. Thus in this case our algorithm coincides with~\cite[Algorithm 1]{DDILLV}.
\end{remark}

\begin{remark}\label{rem:test_rec}
    By design of the algorithm, if we run it on a (not necessarily recurrent) configuration $c$ and it eventually stops (in other words, it does not get stuck in a \textbf{while} loop without slow-releases) then $c$ is recurrent.
\end{remark}

We are finally ready to define the $\del$ statistic.

\begin{definition}\label{def:delay}
    The \emph{delay} associated to a sorted recurrent configuration $c$ of $G_{\mu,\nu}^{(k)}$ is the value $d$ obtained by running Algorithm~\ref{alg:toppl}.
\end{definition}

By Proposition~\ref{prop:alg_toppl} Point~\ref{it:algo3}, one can easily recover $\del(c)$ from the \emph{toppling word} $\wtopp(c)$.\\
In fact, let $d_i$ the number of weak ascents preceding the first occurrence of $i$ in $\wtopp(c)$ for all $i \in [n]$. Then:
\begin{equation*}\label{eq:equiv_delay}
    \del(c) = \sum_{i \in [n]}d_i(c).
\end{equation*}

\begin{example}\label{exa:delay}
    Consider the configurations $c$ and $\tilde{c}$ in Example~\ref{exa:toppling} and the execution of Algorithm~\ref{alg:toppl} on them. The resulting toppling word in both cases is:
    \begin{align*}
        \wtopp(c) &= \wtopp(\tilde{c})\\
        &= \textbf{10},\ \textbf{6},\ \textbf{5},\ 10,\ 6,\ 5,\ \textbf{4},\ \textbf{12},\ \textbf{11},\ \textbf{8},\ \textbf{7},\ 4,\ \textbf{3},\ 12,\ 11,\ 8,\ 7,\ 3,\ \textbf{2},\ \textbf{13},\ 2,\ 13,\ \textbf{9},\ \textbf{1},\ 9,\ 1
    \end{align*}
    where we use the bold style to highlight the first occurrence of each vertex in the word.\\
    Keeping track of the ascents in $\wtopp(c) = \wtopp(\tilde{c})$ and removing the latter occurrences of each vertex in the word, we obtain
    \[
        \def\arraystretch{1.5}
        \begin{array}{l||ccc|c|ccccc|c|c|cc}
            i & 10 & 6 & 5 & 4 & 12 & 11 & 8 & 7 & 3 & 2 & 13 & 9 & 1 \\
            d_i & 0 & 0 & 0 & 1 & 2 & 2 & 2 & 2 & 2 & 3 & 4 & 5 & 5
        \end{array}
    \]
    so one can sum up the contributes and get
    \[
        \del(c) = \del(\tilde{c}) = 28.
    \]
\end{example}

\section{A bijection between labelled Dyck paths and sorted sandpiles}\label{sec:pfandsand}

The main result of this section is the construction of a bijection between the labelled Dyck paths in $\PF_{n,kn}(\mu;\nu)$ and sorted recurrent configurations on the family of graphs $G_{\mu,\nu}^{(k)}$. The argument is a direct extension of the proof given in~\cite{DDILLV} for the case $k = 1$.
\medskip

It is useful to introduce a new notation for the vertices of $G_{\mu,\nu}^{(k)}$ that highlights which clique component each vertex lies in:
\begin{align*}
    v_i^{(\mu_s)} &:= \text{the $i^{\text{th}}$ vertex of the $k$-clique associated to part $\mu_s$},\\
    v_j^{(\nu_s)} &:= \text{the $j^{\text{th}}$ vertex of the $(k-1)$-clique associated to part $\nu_s$}.
\end{align*}
Here, the order of vertices in every clique is given by the natural labelling of the vertices.

\subsection{A bijection between sets of configurations}

The first step in creating the bijection is to inject the set of recurrent configurations on $G_{\mu,\nu}^{(k)}$ in a modified graph where we add edges to make $(k-1)$-clique components into $k$-cliques. We construct this injection in such a way that the statistics $\lev$ and $\del$ are preserved. Denote $G_{\mu \sqcup \nu, \varnothing}^{(k)} := G_{\mu \nu^{\rev}, \varnothing}^{(k)}$ the modified graph. We define the \emph{embedding map}:
\def\arraystretch{1.5}
\begin{equation*}
    \begin{array}{cccc}
        \tilde{\cdot}: & \Sort_k(\mu;\nu) & \longrightarrow & \Conf(G_{\mu \sqcup \nu, \varnothing}^{(k)})\\
         & c & \longmapsto & \tilde{c}
    \end{array}
\end{equation*}
where for every possible $i$, $j$ and $s$:
\begin{equation*}
    \tilde{c}(v_i^{(\mu_s)}) := c(v_i^{(\mu_s)})
    \qquad \text{and} \qquad 
    \tilde{c}(v_j^{(\nu_s)}) := c(v_j^{(\nu_s)}) + (j - 1).
\end{equation*}

\begin{example}\label{exa:tildemap}
    Consider the graph $G_{({\color{mycolor1}4},{\color{mycolor2}1}),({\color{mycolor4}6},{\color{mycolor3}2})}^{(2)}$ and its sorted recurrent configuration\\ $c = {\color{mycolor1} 6}\ {\color{mycolor1} 17}\ {\color{mycolor1} 18}\ {\color{mycolor1} 24}\ {\color{mycolor2} 2}\ {\color{mycolor3} 17}\ {\color{mycolor3} 14}\ {\color{mycolor4} 18}\ {\color{mycolor4} 18}\ {\color{mycolor4} 15}\ {\color{mycolor4} 12}\ {\color{mycolor4} 7}\ {\color{mycolor4} 2}$ from Example~\ref{exa:toppling}.\\
    The map $\tilde{\cdot}$ sends $c$ to the configuration $\tilde{c}$ of $G_{({\color{mycolor1}4},{\color{mycolor2}1},{\color{mycolor3}2},{\color{mycolor4}6}), \varnothing}^{(2)}$ where we modify the following values:
    \[\begin{array}{ll}
        \tilde{c}({\color{mycolor4}1}) = \tilde{c}({\color{mycolor4}v_{1}^{\nu_1}}) = c({\color{mycolor4}1}) + {\color{mycolor4}0} = 2 \qquad &
        \tilde{c}({\color{mycolor4}5}) = \tilde{c}({\color{mycolor4}v_{5}^{\nu_1}}) = c({\color{mycolor4}5}) + {\color{mycolor4}4} = 22\\
        \tilde{c}({\color{mycolor4}2}) = \tilde{c}({\color{mycolor4}v_{2}^{\nu_1}}) = c({\color{mycolor4}2}) + {\color{mycolor4}1} = 8 \qquad &
        \tilde{c}({\color{mycolor4}6}) = \tilde{c}({\color{mycolor4}v_{6}^{\nu_1}}) = c({\color{mycolor4}6}) + {\color{mycolor4}5} = 23\\
        \tilde{c}({\color{mycolor4}3}) = \tilde{c}({\color{mycolor4}v_{3}^{\nu_1}}) = c({\color{mycolor4}3}) + {\color{mycolor4}2} = 14 \qquad &
        \tilde{c}({\color{mycolor3}7}) = \tilde{c}({\color{mycolor3}v_{1}^{\nu_2}}) = c({\color{mycolor3}7}) + {\color{mycolor3}0} = 14\\
        \tilde{c}({\color{mycolor4}4}) = \tilde{c}({\color{mycolor4}v_{4}^{\nu_1}}) = c({\color{mycolor4}4}) + {\color{mycolor4}3} = 18 \qquad &
        \tilde{c}({\color{mycolor3}8}) = \tilde{c}({\color{mycolor3}v_{2}^{\nu_2}}) = c({\color{mycolor3}8}) + {\color{mycolor3}1} = 18.
    \end{array}\]
    Thus, we have $\tilde{c} = {\color{mycolor1} 6}\ {\color{mycolor1} 17}\ {\color{mycolor1} 18}\ {\color{mycolor1} 24}\ {\color{mycolor2} 2}\ {\color{mycolor3} 18}\ {\color{mycolor3} 14}\ {\color{mycolor4} 23}\ {\color{mycolor4} 22}\ {\color{mycolor4} 18}\ {\color{mycolor4} 14}\ {\color{mycolor4} 8}\ {\color{mycolor4} 2}$.
\end{example}

The idea behind map $\tilde{\cdot}$ is to counteract the fact that we are embedding the configuration in a graph with more edges. One can imagine that the grains added to each vertex are exactly the ones sent through the new edges during a toppling or a slow-release on the vertex.

Recall that the values $c(v_j^{(\nu_s)})$ are weakly increasing with respect to $j$ when $c$ is a sorted configuration of $G_{\mu,\nu}^{(k)}$. This implies that the values $\tilde{c}(v_j^{(\nu_s)})$ will be \textit{strictly} increasing, as we can see in Example~\ref{exa:tildemap}.

\begin{definition}\label{def:sortsqcup}
    Let $\Sort_k(\mu \sqcup \nu; \varnothing)$ be the set of all representatives for the sorted non-negative and stable configurations on $G_{\mu \sqcup \nu,\varnothing}^{(k)}$ which are \textbf{strictly} increasing in the $k$-clique components associated to parts $\nu_s$ and weakly decreasing in $k$-cliques associated to $\mu_s$.\\
    The subset of those configurations which are also recurrent is denoted $\SortRec_k(\mu \sqcup \nu; \varnothing)$.
\end{definition}

The condition of being strictly increasing in each clique actually characterizes the image of the embedding map, as we will show with the next technical result.

\begin{proposition}\label{prop:tildemap}
    Consider the embedding map $c \mapsto \tilde{c}$.
    \begin{enumerate}[label=(\alph*)]
        \item\label{it:tilde1} It is a bijection $\Sort_k(\mu;\nu) \to \Sort_k(\mu \sqcup \nu;\varnothing)$.
        \item\label{it:tilde2} The restriction $\SortRec_k(\mu;\nu) \to \SortRec_k(\mu \sqcup \nu;\varnothing)$ is a bijection.
        \item\label{it:tilde3} The configurations $c \in \SortRec_k(\mu,\nu)$ and $\tilde{c}$ produce the same $\wtopp$ using Algorithm~\ref{alg:toppl}.
    \end{enumerate}
\end{proposition}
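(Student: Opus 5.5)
Part (a) is a direct check on degrees. In $G_{\mu,\nu}^{(k)}$ a vertex $v_j^{(\nu_s)}$ has degree $1+(k-1)(\nu_s-1)+k(n-\nu_s)$. In $G_{\mu\sqcup\nu,\varnothing}^{(k)}$ every vertex has degree $1+k(n-1)$, which is larger by exactly $\nu_s-1$. For vertices in the $\mu$-cliques the degree does not change.

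I would show that $c\mapsto\tilde c$ sends $\Sort_k(\mu;\nu)$ into $\Sort_k(\mu\sqcup\nu;\varnothing)$ and is inverted by $\tilde c(v_j^{(\nu_s)})\mapsto \tilde c(v_j^{(\nu_s)})-(j-1)$. The checks are as follows.
\begin{enumerate}[label=(\roman*)]
\item Weak increase of $c$ in $j$ is equivalent to strict increase of $\tilde c$ in $j$.
\item Non-negativity is preserved in both directions, since $\tilde c\ge c$. Conversely, strict increase of integers $\ge0$ forces $\tilde c(v_j^{(\nu_s)})\ge j-1$.
\item Stability: $c(v_j)\le \deg_G(v_j)-1$ together with weak increase gives $c(v_{\nu_s})\le \deg_G-1$. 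Hence $\tilde c(v_j)\le \deg_G-1+(j-1)\le \deg_{\tilde G}-1$, because $j-1\le \nu_s-1$. Conversely, strict increase and $\tilde c(v_{\nu_s})\le\deg_{\tilde G}-1$ give $\tilde c(v_j)\le \deg_{\tilde G}-1-(\nu_s-j)$, that is, $c(v_j)\le\deg_G-1$.
\end{enumerate}
This settles (a).

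For (b) and (c) I would run Algorithm~\ref{alg:toppl} on $c$ and $\tilde c$ simultaneously and prove a coupling invariant by induction on the visits. The invariant: at every moment the two runs have the same release record $\mathbf r$, and they have the same current value at every vertex except within each $\nu$-clique. There the difference equals the number of grains that would still travel along the extra edges. Concretely, for $v=v_j^{(\nu_s)}$ the difference $\tilde c'(v)-c'(v)$ is
\[
(j-1)+\#\{\text{released grains along multiplicity-}k\text{ edges from clique-mates}\}-\#\{\text{such grains received in }G\},
\]
which comes out to a quantity in $\{-(\nu_s-1),\dots,\nu_s-1\}$ determined by how many of its clique-mates have already started or finished.

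The crucial point concerns the thresholds. In $\tilde G$ an extra edge has multiplicity $k$ where $G$ had $k-1$. So a release with threshold $k$ uses the extra edges only in $\tilde G$, while releases with smaller thresholds use those edges in both graphs. Hence the only discrepancy arises at the \emph{first} release of each vertex: in $\tilde G$ one extra grain goes to each clique-mate, and in $G$ it does not. The main step, and the obstacle, is showing that this discrepancy never changes the instability test ``$r_i=0$ and $i$ unstable''. Subsequent releases do not test stability, so only this test matters.

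I would handle the test with the sortedness. Within a $\nu$-clique, vertices are visited in decreasing $j$. The sorting makes the larger-$j$ vertices richer in $c$, and the shift $(j-1)$ in $\tilde c$ is designed so that at the moment $v_j$ is tested, the extra grains it has received in $\tilde G$ from already-started clique-mates compensate exactly. Counting from both sides, $\tilde c'(v_j)-\deg_{\tilde G}= c'(v_j)-\deg_G+(\text{something}\le 0 \text{ with equality in the critical case})$. I expect to need a small argument, in the spirit of the corresponding step in~\cite{DDILLV} for $k=1$, showing that a first release of $v_j$ in one run but not in the other would contradict the ordering $c(v_j)\le c(v_{j+1})$. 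This is where I would spend the effort, checking the boundary cases where two clique-mates are equal in $c$.

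With the invariant established, the two runs append identical letters at identical visits, so $\wtopp(c)=\wtopp(\tilde c)$, which is (c). Since Remark~\ref{rem:test_rec} says that termination characterizes recurrence, $c$ is recurrent if and only if $\tilde c$ is. Combined with (a), this gives (b).
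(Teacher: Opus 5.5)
Your part (a) is correct, and a bit cleaner than the paper's. For (b) and (c) you use the same strategy as the paper: run Algorithm~\ref{alg:toppl} on $c$ and $\tilde c$ in parallel and show by induction over the visits that both runs take the same action. The gap is that the one step carrying the argument is deferred (``I expect to need a small argument''), and its content is never supplied.

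Concretely, take an unstarted $v=v_j^{(\nu_s)}$ at the moment it is tested. Let $M$ be the number of releases so far and $b$ the number of clique-mates already started; both are equal in the two runs by induction. Then $c'(v)=c(v)+1+M-b$, $\tilde c'(v)=c(v)+(j-1)+1+M$ and $\deg_{\tilde G}(v)=\deg_G(v)+\nu_s-1$, so
\[
\tilde c'(v)-\deg_{\tilde G}(v)=c'(v)-\deg_G(v)+\bigl(b-(\nu_s-j)\bigr).
\]
Your ``something $\le 0$ with equality in the critical case'' is $b-(\nu_s-j)$, and neither half is automatic. A priori $b$ can be anything in $\{0,\dots,\nu_s-1\}$. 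If $b>\nu_s-j$, the implication ``stable in $G\Rightarrow$ stable in $\tilde G$'' could fail. If $v$ is unstable in $G$ while $b<\nu_s-j$, the reverse implication could fail.

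The missing idea is the ordering lemma the paper's proof relies on. In the run on $c$, within each $\nu$-clique the first releases happen in decreasing order of $j$: $v_j^{(\nu_s)}$ starts only after $v_{j+1}^{(\nu_s)}$ has started. The proof is internal to the run on $c$ and has three ingredients.
\begin{enumerate}
\item Two unstarted clique-mates receive exactly the same grains, because every other vertex is joined to both by edges of equal multiplicity. A threshold-$k$ release of a clique-mate skips both, and every other release reaches both. Hence $c'(v_j)-c'(v_{j+1})=c(v_j)-c(v_{j+1})\le 0$ while both are unstarted.
\item The two vertices carry consecutive labels, so $v_{j+1}$ is visited immediately before $v_j$. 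This is where the numbering of Definition~\ref{def:graph_fam} is used, not merely that the sweep goes ``in decreasing $j$''. If $v_{j+1}$ is still unstarted when $v_j$ is tested, it was stable at that preceding visit, and nothing changed in between.
\item Then $v_j$ has no more grains than $v_{j+1}$ and the same degree, so $v_j$ is stable as well.
\end{enumerate}
Chaining this gives both halves. At every test of an unstarted $v_j$ we get $b\le \nu_s-j$, since no lower clique-mate can have started. Whenever $v_j$ is unstable in the run on $c$, we get $b=\nu_s-j$, since all higher clique-mates have started. With the identity above, the two instability tests then agree.

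A minor point: your formula for $\tilde c'-c'$ omits the $\nu_s-1$ grains a started $v_j$ does not send in $G$ at its own first release. This is harmless, since started vertices are never tested.
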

\begin{proof}
    To simplify notation, let $G := G_{\mu,\nu}^{(k)}$ and $\tilde{G} := G_{\mu\sqcup\nu,\varnothing}^{(k)}$.\\
    In order to show Point~\ref{it:tilde1}, we need to check that the embedding map and its natural inverse $\tilde{c} \mapsto c$ (which subtracts the right number of grains on each vertex) preserve the defining properties of their domain and codomain.
    \begin{itemize}
        \item \ul{Sortedness}: since the embedding map (and its inverse) does not change values of vertices $v_i^{(\mu_s)}$, we only need to check sortedness in the cliques associated to parts of $\nu$. Let $v_{j}^{(\nu_s)}$ and $v_{j+1}^{(\nu_s)}$ be generic consecutive vertices in such a clique. Consider $c \in \Sort_k(\mu;\nu)$, then $c(v_j^{(\nu_s)}) \leq c(v_{j+1}^{(\nu_s)})$ by definition. This implies that:
        \[
            \tilde{c}(v_j^{(\nu_s)}) = c(v_j^{(\nu_s)}) + j \leq c(v_{j+1}^{(\nu_s)}) + j < c(v_{j+1}^{(\nu_s)}) + j + 1 = \tilde{c}(v_{j+1}^{(\nu_s)}).
        \]
        Conversely, for any $\tilde{c} \in \Sort_k(\mu\sqcup\nu;\varnothing)$ we have $\tilde{c}(v_j^{(\nu_s)}) \leq \tilde{c}(v_{j+1}^{(\nu_s)}) - 1$ thus
        \[
            c(v_j^{(\nu_s)}) = \tilde{c}(v_j^{(\nu_s)}) - j \leq \tilde{c}(v_{j+1}^{(\nu_s)}) - j - 1 = c(v_{j+1}^{(\nu_s)}).
        \]
        \item \ul{Non-negativity and stability}: looking at the degree of vertices, by construction we see that:
        \begin{align*}
            &\deg_{\tilde{G}}(v_i^{(\mu_s)}) = \deg_G(v_i^{(\mu_s)})\\
            &\deg_{\tilde{G}}(v_j^{(\nu_s)}) = \deg_G(v_j^{(\nu_s)}) + (\nu_s - 1).
        \end{align*}
        Consider a configuration $c \in \Sort_k(\mu;\nu)$, by definition $0 \leq c(v_j^{(\nu_s)}) < \deg_G(v_j^{(\nu_s)})$ so it is also true that
        \[
            0 \leq \tilde{c}(v_j^{(\nu_s)}) = c(v_j^{(\nu_s)}) + (j - 1) \leq \deg_G(v_j^{(\nu_s)}) + (\nu_s - 1) = \deg_{\tilde{G}}(v_j^{(\nu_s)}).
        \]
        On the other hand, given a configuration $\tilde{c} \in \Sort_k(\mu\sqcup\nu; \varnothing)$ we have by definition that $j - 1 \leq \tilde{c}(v_j^{(\nu_s)}) < \deg_{\tilde{G}}(v_j^{(\nu_s)})$. Actually, since the values are \emph{strictly} increasing, the condition that $\tilde{c}(v_{\nu_s}^{(\nu_s)})$ must be stable implies that $\tilde{c}(v_j^{(\nu_s)}) < \deg_{\tilde{G}}(v_j^{(\nu_s)}) - (\nu_s - j)$.\\
        Thus
        \[
            0 \leq \tilde{c}(v_j^{(\nu_s)}) - (j-1) = c(v_j^{(\nu_s)}) \leq \deg_{\tilde{G}}(v_j^{(\nu_s)}) - (\nu_s - j) - (j-1) = \deg_G(v_j^{(\nu_s)}).
        \]
    \end{itemize}
    Since both maps are well defined and trivially injective, the embedding map $\Sort_k(\mu;\nu) \to \Sort_k(\mu \sqcup \nu;\varnothing)$ is a bijection.
    \medskip\\
    To prove both Points~\ref{it:tilde2} and~\ref{it:tilde3}, consider the configuration $c \in \Sort_k(\mu;\nu)$ and its corresponding $\tilde{c} \in \Sort_k(\mu\sqcup\nu;\varnothing)$. Suppose that one between $c$ and $\tilde{c}$ is recurrent. We will prove that both $c$ and $\tilde{c}$ are recurrent and the toppling words $\wtopp(c)$ and $\wtopp(\tilde{c})$ will be the same, by showing that Algorithm~\ref{alg:toppl} starts slow-releases at the same exact steps. In other words...
    \begin{center}
        \ul{Claim}: the toppling algorithm on configurations $c$ and $\tilde{c}$ terminates when at least one of them is recurrent, and the actions on the corresponding graphs $G$ and $\tilde{G}$ (i.e.~the start or continuation of a slow-release or the skip of the vertex) coincide at each step.
    \end{center}
    To keep track of each step of the algorithm, suppose that Algorithm~\ref{alg:toppl} performs $N > 0$ iterations on both configurations $c$ and $\tilde{c}$. We define words $\mathbf{u} := u_1u_2\dots u_N$ and $\mathbf{u}' := u_1'u_2'\dots u_N'$ such that if at step $1 \leq i \leq N$ vertex $v \in [n]$ is read\footnote{The reading order of the vertices on $G$ and $\tilde{G}$ is the same.}:
    \[
        u_i := \begin{cases}
            v & \text{if at step $i$ a slow-release is performed on $v$}\\
            \bullet & \text{if at step $i$ $v$ is skipped}
        \end{cases}
    \]
    \[
        u_i' := \begin{cases}
            v & \text{if at step $i$ a slow-release is performed on $v$}\\
            \bullet & \text{if at step $i$ $v$ is skipped}.
        \end{cases}
    \]
    Then, we will often use the following values for $i = 1,\dots,N$:
    \[
        r_i(v) := \#\{j \leq i \ | \ u_j = v\}
        \qquad
        r_i'(v) := \#\{j \leq i \ | \ u_j' = v\}
    \]
    and the number of slow-releases after $i$ steps:
    \[
        M_i := \#\{j \leq N \ | \ u_j \neq \bullet\}.
    \]
    Using the lists of moves $\mathbf{u}$ and $\mathbf{u}'$, we construct the sequence of all configurations the algorithm goes through. Define $\psi_{\bullet}$ to be the identity map on $\Sort_k(\mu\sqcup\nu;\varnothing)$, then
    \[  
        \begin{array}{l}
            c_0 := \phi_0(c)\\
            c_h := \psi_{u_h}^{(r_h(u_h))}(c_{h-1})
        \end{array}
        \qquad
        \begin{array}{l}
            c_0' := \phi_0(c)\\
            c_h' := \psi_{u_h'}^{(r_h'(u_h'))}(c_{h-1}').
        \end{array}
    \]
    for all $0 < h \leq N$.\\
    Now we prove that $\mathbf{u} = \mathbf{u}'$ by induction on $N$. The base case $N = 0$ is trivially true. Suppose the claim is true for $N > 0$, we want to show that the algorithm make the same next step (i.e.~step $N+1$). Let us compute the configurations $c_N$ and $c_N'$ for all vertices $v \in [n]$. We have several cases.
    \begin{enumerate}[label=\arabic*)]

        \item\label{it:case_mu} \ul{Suppose $v = v_i^{(\mu_s)}$}: recall that $c(v) = c(v_i^{(\mu_s)}) = \tilde{c}(v_i^{(\mu_s)}) = \tilde{c}(v)$.
        \begin{enumerate}[label=\alph*)]
            \item\label{it:subcase1_mu} \ul{If $r_N(v) = r_N'(v) = 0$}, then in the $N$ steps no slow-release was performed on vertex $v$, both on $c$ and $c'$.\\
            In both configurations $v$ receives $M_N + 1$ grains from slow-releases of the sink and other vertices, thus
            \[
                c_N(v) = c(v) + M_N + 1 = \tilde{c}(v) + M_N + 1 = \tilde{c}_N(v).
            \]
            \item\label{it:subcase2_mu} \ul{If $0 < r_N(v) = r_N'(v) < k$}, then in both configurations $v$ receives $M_N + 1 - r_N$ grains from the sink and other vertices and gives $(n-1)r_N(v) + 1$ grains, one to the sink (at the first slow-release) and $r_N(v)$ to each other vertex. Overall:
            \begin{align*}
                c_N(v) &= c(v) + M_N + 1 - r_N(v) - ((n-1)r_N(v) + 1) = c(v) + M_N - nr_N(v)\\
                &= \tilde{c}(v) + M_N - nr_N'(v) = \tilde{c}(v) + M_N + 1 - r_N'(v) - ((n-1)r_N'(v) + 1) = \tilde{c}_N(v).
            \end{align*}
            \item\label{it:subcase3_mu} \ul{If $r_N(v) = r_N'(v) = k$}: it satisfies the same formula of Subcase (1b).
        \end{enumerate}

        \item\label{it:case_nu} \ul{Suppose $v = v_j^{(\nu_s)}$}: recall that in this case $\tilde{c}(v) = \tilde{c}(v_j^{(\nu_s)}) = c(v_j^{(\nu_s)}) + (j-1) = c(v) + (j-1)$.\\
        To simplify notation, let $b_N := \#\{w = v_h^{(\nu_s)} \ | \ r_N(w) > 0\text{ and }w \neq v\}$ be the number of other vertices in the same clique of $v$ (the one corresponding to part $\nu_s$) on which the algorithm has already performed at least one slow-release. It tracks the number of slow-releases of type $\psi_{w}^{(k)}$ in which $v$ did not receive a grain from $w$ in the case of configuration $c$, since $G$ is the underlying graph\footnote{If $w$ is in the same $(k-1)$-clique component of $v$, at the first slow-release on $w$ no grain is sent to $v$ since the multiplicity of the edge $vw$ is $k-1 \not\geq k$.}.
        \begin{enumerate}[label=\alph*)]
            \item\label{it:subcase1_nu} \ul{If $r_N(v) = r_N'(v) = 0$}: then no slow-release is performed on $v$. This means that in $c$ the vertex $v$ received $M_N + 1 - b_N$ grains, while in $\tilde{c}$ it received $M_N + 1$ grains. In other words:
            \begin{align*}
                &c_N(v) = c(v) + M_N + 1 - b_N\\
                &\tilde{c}_N(v) = \tilde{c}(v) + M_N + 1 = c(v) + M_N + 1 + (j-1).
            \end{align*}
            \item\label{it:subcase2_nu} \ul{If $0 < r_N(v) = r_N'(v) < k$}: then at least one slow-release has been performed on $v$. Additionally, at the first slow-release grains are not sent to the other vertices of the clique $\nu_s$ of $v$. Overall:
            \begin{align*}
                c_N(v) &= c(v) + M_N + 1 - b_N - ((n-1)r_N(v) + 1 - (\nu_s - 1))\\
                &= c(v) + M_N - (n-1)r_N(v) + (\nu_s - 1) - b_N \\
                \tilde{c}_N(v) &= \tilde{c}(v) + M_N + 1 - ((n-1)r_N'(v) + 1)\\
                &= c(v) + M_N - (n-1)r_N'(v) + (j-1).
            \end{align*}
            \item\label{it:subcase3_nu} \ul{If $r_N(v) = r_N'(v) = k$}: the formula is analogous to the previous case.
        \end{enumerate}

    \end{enumerate}
    Finally, we are able to determine the next step of the algorithm for both configurations $c$ and $\tilde{c}$. We will consider the subdivision like in the previous argument and fix $v$ to be the vertex read at step $N+1$.
    \begin{enumerate}
        
        \item[\ref{it:case_mu}] Recall that in this case $\deg_G(v) = \deg_{\tilde{G}}(v) = (n-1)k + 1$. Thus:
        \begin{enumerate}
            \item[\ref{it:subcase1_mu}] since $c_N(v) = \tilde{c}_N(v)$ and the degrees coincide:
            \[
                u_{N+1} = u_{N+1}' = \begin{cases}
                    \bullet & \text{if $c_N(v) = \tilde{c}_N(v) < (n-1)k + 1$}\\
                    v & \text{otherwise}.
                \end{cases}
            \]
            \item[\ref{it:subcase2_mu}] in this case, since $0 < r_N(v) = r_N'(v) < k$ by design the algorithm performs another slow-release. Therefore:
            \[
                u_{N+1} = u_{N+1}' = v.
            \]
            \item[\ref{it:subcase3_mu}] since on every non-sink vertex we have applied at most $k$ slow-releases, we trivially have $M_N - nr_N(v) = M_N - nr_N'(v) = M_N - nk \leq 0$.\\
            This implies that $c_N(v) = \tilde{c}_N(v) = c(v) + M_N - nk \leq c(v)$. Since $c$ and $\tilde{c}$ were stable, this implies $v$ is stable in both $c_N$ and $\tilde{c}_N$. Thus:
            \[
                u_{N+1} = u_{N+1}' = \bullet.
            \]
        \end{enumerate}
        
        \item[\ref{it:case_nu}] We have that $\deg_{\tilde{G}}(v) = (n-1)k + 1$ but, since in $G$ we have $v$ is in a $(k-1)$-clique, $\deg_G(v) = (n-1)k + 1 - (\nu_s - 1) = (n-1)k - \nu_s + 2$.\\
        Before looking at the subcases, an important observation is that the vertices of the clique associated to $\nu_s$ are read in decreasing order with respect to their value in $c$ and $\tilde{c}$. Therefore, for $j_1 < j_2$ we perform\footnote{Recall that we ordered values of $c$ and $\tilde{c}$ so that $c(v_{j_1}^{(\nu_s)}) < c(v_{j_2}^{(\nu_s)})$ and $\tilde{c}(v_{j_1}^{(\nu_s)}) \leq \tilde{c}(v_{j_2}^{(\nu_s)})$.} the first slow-release of $v_{j_2}^{(\nu_s)}$ before the one of $v_{j_1}^{(\nu_s)}$.\\
        This implies that $r_N(v_{j_1}^{(\nu_s)}) \leq r_N(v_{j_2}^{(\nu_s)})$.
        \begin{enumerate}
            \item[\ref{it:subcase1_nu}] We look at the possible values of $u_{N+1}$.
            \begin{itemize}[label=$\triangleright$] 
                \item \ul{$u_{N+1} = v$}: in other words we are performing a slow-release on $v$ for the first time. By the observation above, all vertices $v_h^{(\nu_s)}$ with $h > j$ (which are exactly $\nu_s - j$) have been slow-released at least once. Therefore $b_N = \nu_s - j$.\\
                Moreover, to perform a slow-release on $v$ we must have that $v$ is unstable in $c_N$. These two observations combined imply:
                \begin{align*}
                    c(v) + M_N + 1 - b_N = c_N(v) &\geq \deg_G(v) = (n-1)k - \nu_s + 2\\
                    c(v) + M_N + 1 - (\nu_s - j) &\geq (n-1)k - \nu_s + 2\\
                    c(v) + M_N + 1 + (j-1) &\geq (n-1)k + 1\\
                    \tilde{c}(v) &\geq \deg_{\tilde{G}}(v)
                \end{align*}
                and the last inequality means that $v$ is unstable also in $\tilde{c}$ and therefore $u_{N+1}' = v$.
                \item \ul{$u_{N+1} = \bullet$}: since no slow-release has been performed on $v$, we have $b_N \leq \nu_s - j$.\\
                On the other hand, it also means that $v$ is stable in $c_N$ hence:
                \begin{align*}
                    c(v) + M_N + 1 - (\nu_s - j) \leq c(v) + M_N + 1 - b_N = c_N(v) &< (n-1)k - \nu_s + 2\\
                    c(v) + M_N + 1 + j &< (n-1)k + 2\\
                    c(v) + M_N + 1 + (j-1) &< (n-1)k + 1
                \end{align*}
                and the last relation represents exactly the stability of $v$ in $\tilde{c}_N$, so $u_{N+1}' = \bullet$.
            \end{itemize}
            \item[\ref{it:subcase2_nu}] By design of Algorithm~\ref{alg:toppl}, like in Subcase (1b) we have that the algorithm will perform another slow-release of vertex $v$. Therefore:
            \[
                u_{N+1} = u_{N+1}' = v.
            \]
            \item[\ref{it:subcase3_nu}] By the construction of the algorithm we have
            \[
                u_{N+1} = u_{N+1}' = \bullet.
            \]
        \end{enumerate}
    \end{enumerate}
    In every case we can check that $u_{N+1} = u_{N+1}'$. Therefore, by induction we have that for every $N$ (up to the stopping point of the algorithm) $\mathbf{u} = \mathbf{u}'$. Since one between $c$ and $\tilde{c}$ was recurrent, the algorithm terminates in both cases and by Remark~\ref{rem:test_rec} $c$ and $\tilde{c}$ are both recurrent.\\
    In particular, since $\wtopp(c)$ and $\wtopp(\tilde{c})$ are obtained from $\mathbf{u}$ and $\mathbf{u}'$ by erasing the ``$\bullet$'' symbols, we get $\wtopp(c) = \wtopp(\tilde{c})$.
\end{proof}

\begin{example}\label{exa:props_tildemap}
    We include in Appendix~\ref{app:comp_algo} two tables with the complete description of the $\del$ algorithm for the configurations $c$ and $\tilde{c}$ from Example~\ref{exa:tildemap}.
\end{example}

From this proposition we can show that the embedding map preserves both statistics $\lev$ and $\del$.

\begin{corollary}\label{cor:delaytilde}
    For all $c \in \SortRec_k(\mu;\nu)$ we have $\del(c) = \del(\tilde{c})$.
\end{corollary}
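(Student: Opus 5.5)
This corollary will follow directly from Proposition~\ref{prop:tildemap}, Point~\ref{it:tilde3}, together with the reformulation of the delay in terms of the toppling word given after Definition~\ref{def:delay}. The plan is as follows.

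First, fix $c\in\SortRec_k(\mu;\nu)$. By Proposition~\ref{prop:tildemap}, Point~\ref{it:tilde2}, $\tilde c\in\SortRec_k(\mu\sqcup\nu;\varnothing)$ is recurrent. Algorithm~\ref{alg:toppl} is therefore well defined on both configurations, and by Proposition~\ref{prop:alg_toppl} it terminates in both cases. Proposition~\ref{prop:tildemap}, Point~\ref{it:tilde3}, then gives $\wtopp(c)=\wtopp(\tilde c)$.

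Second, by Proposition~\ref{prop:alg_toppl}, Point~\ref{it:algo3}, each completed \textbf{while} cycle other than the last one produces exactly one weak ascent in $\wtopp$. Conversely, every weak ascent comes from the completion of a cycle. Hence the value of $n_{\text{cycles}}$ at the moment of the first release on a vertex $i$ equals the number $d_i$ of weak ascents preceding the first occurrence of $i$ in $\wtopp$. Summing over $i$ yields $\del(c)=\sum_{i\in[n]}d_i(c)$, and likewise $\del(\tilde c)=\sum_{i\in[n]} d_i(\tilde c)$. Note that this identity applies to $\tilde c$ on $G^{(k)}_{\mu\sqcup\nu,\varnothing}$, since Proposition~\ref{prop:alg_toppl} holds for any graph of the family and any recurrent configuration.

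Since the quantities $d_i$ depend only on the word $\wtopp$, equality of the words gives $d_i(c)=d_i(\tilde c)$ for every $i$, and therefore $\del(c)=\del(\tilde c)$.

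The only delicate point is the identification $n_{\text{cycles}}=d_i$, namely that no cycle completes without producing an ascent and that no ascent appears inside a single cycle. This is precisely Point~\ref{it:algo3}. Alternatively, one can avoid that step entirely: the proof of Proposition~\ref{prop:tildemap} shows that the full move sequences $\mathbf u$ and $\mathbf u'$, including the skips $\bullet$, coincide. Hence the counters $n_{\text{cycles}}$ and $d$ evolve identically in the two runs, which directly gives equality of the output $d$.
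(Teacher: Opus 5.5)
Your proof is correct and is essentially the paper's: the delay depends only on $\wtopp$, via the weak-ascent reformulation following Definition~\ref{def:delay}, and Proposition~\ref{prop:tildemap}, Point~\ref{it:tilde3}, gives $\wtopp(c)=\wtopp(\tilde c)$. Your extra justification of that reformulation through Proposition~\ref{prop:alg_toppl}, Point~\ref{it:algo3}, and your alternative via the equality $\mathbf u=\mathbf u'$ of the full move sequences are both sound; they only make explicit what the paper's one-line proof takes for granted.
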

\begin{proof}
    The statistic $\del(c)$ depends only on $\wtopp(c)$. Thus, by Proposition~\ref{prop:tildemap} Point~\ref{it:tilde3} we have $\del(c) = \del(\tilde{c})$.
\end{proof}

\begin{lemma}\label{lem:leveltilde}
    For all $c \in \SortRec_k(\mu;\nu)$ we have $\lev(c) = \lev(\tilde{c})$.
\end{lemma}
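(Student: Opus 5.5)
Since $\lev$ is defined as the sum of the values minus the number of non-sink edges counted with multiplicity, we only need to compare the two totals. The plan is to show that the grains added by the embedding map exactly match the edges added when passing from $G:=G_{\mu,\nu}^{(k)}$ to $\tilde{G}:=G_{\mu\sqcup\nu,\varnothing}^{(k)}$. That is, we will verify
\[
\sum_{i=1}^n \tilde{c}(i)-\sum_{i=1}^n c(i)=\big|E_0(\tilde{G})\big|-\big|E_0(G)\big| .
\]

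First we compute the left-hand side. The embedding map leaves the values on the vertices $v_i^{(\mu_s)}$ unchanged and adds $j-1$ grains to each $v_j^{(\nu_s)}$. Summing over $j=1,\dots,\nu_s$ in each clique gives
\[
\sum_{s}\sum_{j=1}^{\nu_s}(j-1)=\sum_s\binom{\nu_s}{2}.
\]

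Next we compute the right-hand side from Definition~\ref{def:graph_fam}. The two graphs share the same vertex set, the same sink edges, and the same multiplicity $k$ on every edge between distinct components and within the $k$-cliques $K_i$. The only difference is inside each component $H_s$: there the $\binom{\nu_s}{2}$ edges have multiplicity $k-1$ in $G$ and multiplicity $k$ in $\tilde{G}$. Hence
\[
\big|E_0(\tilde{G})\big|-\big|E_0(G)\big|=\sum_s\binom{\nu_s}{2},
\]
which matches the left-hand side. It follows that $\lev(\tilde c)=\lev(c)$. Here $\lev(\tilde c)$ makes sense because $\tilde c$ is recurrent by Proposition~\ref{prop:tildemap}\ref{it:tilde2}.

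There is no real obstacle; the only care needed is the bookkeeping of edge multiplicities. In particular, we should confirm that the reordering $\mu\nu^{\rev}$ used to define $\tilde G$ does not change any inter-component multiplicities, and it does not, since all of them equal $k$ in both graphs.
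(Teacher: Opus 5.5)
Your proposal is correct and follows essentially the same argument as the paper. Both compare the $\sum_s\binom{\nu_s}{2}$ extra grains that the embedding places on the vertices $v_j^{(\nu_s)}$ with the $\sum_s\binom{\nu_s}{2}$ extra non-sink edges created when each $(k-1)$-clique $H_s$ becomes a $k$-clique in $G_{\mu\sqcup\nu,\varnothing}^{(k)}$, and conclude that the two changes cancel in $\lev$.
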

\begin{proof}
    By construction, we have that the new edges in $G_{\mu\sqcup\nu, \varnothing}^{(k)}$ are exactly:
    \[
        \sum_{\nu_s \in \nu}\sum_{j = 0}^{\nu_s - 1}j = \sum_{\nu_s \in \nu}{\nu_s \choose 2}.
    \]
    We can compute:
    \begin{align*}
        \lev(\tilde{c}) &= -|E_0(G_{\mu\sqcup\nu,\varnothing}^{(k)})| + \sum_{i = 1}^{n}\tilde{c}(i)\\
        &= -\left(|E_0(G_{\mu,\nu}^{(k)})| + \sum_{\nu_s \in \nu}{\nu_s \choose 2}\right) + \left(\sum_{\mu_s \in \mu}\sum_{i = 1}^{\mu_s}\tilde{c}(v_i^{(\mu_s)}) + \sum_{\nu_s \in \nu}\sum_{j = 1}^{\nu_s}\tilde{c}(v_j^{(\nu_s)})\right)\\
        &= -\left(|E_0(G_{\mu,\nu}^{(k)})| + \sum_{\nu_s \in \nu}{\nu_s \choose 2}\right) + \left(\sum_{\mu_s \in \mu}\sum_{i = 1}^{\mu_s}c(v_i^{(\mu_s)}) + \sum_{\nu_s \in \nu}\sum_{j = 1}^{\nu_s}\big(c(v_j^{(\nu_s)}) + j - 1\big)\right)\\
        &= -\left(|E_0(G_{\mu,\nu}^{(k)})| + \sum_{\nu_s \in \nu}{\nu_s \choose 2}\right) + \left(\sum_{\mu_s \in \mu}\sum_{i = 1}^{\mu_s}c(v_i^{(\mu_s)}) + \sum_{\nu_s \in \nu}\sum_{j = 1}^{\nu_s}c(v_j^{(\nu_s)}) + \sum_{\nu_s \in \nu}{\nu_s \choose 2}\right)\\
        &= -\left(|E_0(G_{\mu,\nu}^{(k)})| + \sum_{\nu_s \in \nu}{\nu_s \choose 2}\right) + \left(\sum_{i = 1}^n c(i) + \sum_{\nu_s \in \nu}{\nu_s \choose 2}\right) = \lev(c).
    \end{align*}
    This concludes the proof.
\end{proof}

\section{The bijection with labelled Dyck paths}\label{sec:bijSortedDyck}

In this section we will define the bijection between sorted recurrent configurations on $G_{\mu,\nu}^{(k)}$ and labelled Dyck paths in $\PF_{n,kn}(\mu;\nu)$. Our construction can be subdivided into 3 steps:
\begin{enumerate}[label=\roman*)]
    \item define a bijection $\Gamma_0: \SortRec_k((1^n);\varnothing) \to \PF_{n,kn}((1^n);\varnothing)$ that sends statistics $(\lev, \del)$ into $(\area,\pmaj)$,
    \item show that the bijection $\Gamma_0$ restricts to the subsets $\SortRec_k(\mu\sqcup\nu;\varnothing)$ and $\PF_{n,kn}(\mu;\nu)$,
    \item compose $\Gamma_0$ with the embedding map to obtain a bijection $\Gamma: \SortRec_k(\mu;\nu) \to \PF_{n,kn}(\mu;\nu)$ defined by $\Gamma(c) := \Gamma_0(\tilde{c})$.
\end{enumerate}

The first two steps follow from the next result.
\begin{proposition}\label{prop:gamma0map}
    The map given by
    \def\arraystretch{1.5}
    \[\begin{array}[t]{cccc}
        \Gamma_0: & \SortRec_k((1^n);\varnothing) & \longrightarrow & \PF_{n,kn}((1^n);\varnothing)\\
         & c & \longmapsto &    {\def\arraystretch{1.2}
                                \begin{array}[t]{cccc}
                                    \pi: & [n] & \to & [n]\\
                                        & i & \mapsto & (n-1)k - c(i) + 1
                                \end{array}}
    \end{array}\]
    is well defined and bijective.\\
    Moreover, the map $\Gamma_0$ restricts to a bijection $\SortRec_k(\mu\sqcup\nu;\varnothing) \to \PF_{n,kn}(\mu;\nu)$ for all compositions $\mu$ and $\nu$ with $|\mu| + |\nu| = n$.
\end{proposition}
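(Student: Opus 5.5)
The plan is to reduce everything to Lemma~\ref{lem:rec_kcomplete} and Remark~\ref{rem:PFnkn}.

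\textbf{Identifying the objects.} For $\mu=(1^n)$ and $\nu=\varnothing$, every component of $G_{(1^n),\varnothing}^{(k)}$ is a single vertex. So the sortedness condition is vacuous, and $\SortRec_k((1^n);\varnothing)=\Rec(G_{(1^n),\varnothing}^{(k)})$, the recurrent configurations of the $k$-complete graph with sink. On the other side, $W((1^n);\varnothing)$ is a shuffle of one-letter words, hence it contains every permutation of $[n]$. Therefore $\PF_{n,kn}((1^n);\varnothing)=\PF_{n,kn}$. Via Remark~\ref{rem:PFnkn}, $\Gamma_0(c)$ is the function $\pi=f_D$ with $\pi(i)=(n-1)k-c(i)+1$, so the statement becomes: $c$ is recurrent if and only if $\pi$ is an $(n,kn)$-parking function.

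\textbf{Well-definedness and bijectivity of $\Gamma_0$.} Stability and non-negativity, $0\le c(i)\le (n-1)k$, translate exactly into $1\le \pi(i)\le (n-1)k+1$. For any $j\in[n]$ we have
\[
\pi(i)\le 1+k(j-1)\iff c(i)\ge (n-j)k .
\]
Lemma~\ref{lem:rec_kcomplete} says the increasingly sorted values of $c$ dominate $0,k,\dots,(n-1)k$ pointwise. Equivalently, for every $j$ at least $j$ vertices satisfy $c(i)\ge (n-j)k$. By the equivalence above, this is exactly $\#\pi^{-1}([1+k(j-1)])\ge j$, the parking condition of Remark~\ref{rem:PFnkn}. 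Note that any $(n,kn)$-parking function automatically takes values $\le (n-1)k+1$, so the inverse map $c(i)=(n-1)k+1-\pi(i)$ gives a stable, non-negative, recurrent configuration. Hence $\Gamma_0$ is bijective.

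\textbf{Restriction.} First, $G_{\mu\sqcup\nu,\varnothing}^{(k)}$ is, as a multigraph, the same $k$-complete graph with sink: all edges have multiplicity $k$, whether inside a clique or between cliques. So $\SortRec_k(\mu\sqcup\nu;\varnothing)$ is the subset of $\Rec(G^{(k)}_{(1^n),\varnothing})$ cut out by two monotonicity conditions:
\begin{itemize}
\item weakly decreasing values of $c$ on each clique coming from $\mu_s$;
\item strictly increasing values of $c$ on each clique coming from $\nu_s$.
\end{itemize}
Next, match indices with the sets defining $W(\mu;\nu)$. By Definition~\ref{def:graph_fam}, the vertices of $H_1,H_2,\dots$ are $I_{\nu_1},I_{\nu_2},\dots$, and $K_1,K_2,\dots$ occupy $K_{\mu_1},K_{\mu_2},\dots$ counted from $n$ downward.

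Recall that $\wrow(D)$ lists labels by increasing column $\pi(i)$, and within a common column by increasing label, since columns are increasing from bottom to top.
\begin{itemize}
\item On a $\mu$-clique, $c$ weakly decreasing in the index means $\pi$ weakly increasing in the index. Ties are read increasingly, so these labels appear in $\wrow$ in increasing order, matching $\uparrow\! K_{\mu_s}$.
\item On a $\nu$-clique, $c$ strictly increasing means $\pi$ strictly decreasing. So larger labels sit in strictly earlier columns and appear first in $\wrow$, matching $\downarrow\! I_{\nu_s}$.
\end{itemize}
Both implications reverse. A weak increase in $\wrow$ order forces $\pi$ weakly increasing in the label. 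A decrease in $\wrow$ order forces $\pi$ strictly decreasing, because two labels in the same column would be read increasingly. Thus $\wrow(D)\in W(\mu;\nu)$ if and only if $c$ lies in $\Sort_k(\mu\sqcup\nu;\varnothing)$, which gives the restricted bijection.

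\textbf{Main obstacle.} I expect the main care to lie in this last step. One must get the tie-breaking right, which is exactly where strict versus weak monotonicity enters, and keep the vertex-numbering convention of Definition~\ref{def:graph_fam} aligned with the sets $K_{\mu_s}$ and $I_{\nu_s}$.
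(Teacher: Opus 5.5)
Your proposal is correct and follows essentially the same route as the paper. You translate the recurrence criterion of Lemma~\ref{lem:rec_kcomplete} into the parking condition of Remark~\ref{rem:PFnkn} via $\pi(i)\le 1+k(j-1)\iff c(i)\ge (n-j)k$, invert with $c(i)=(n-1)k+1-\pi(i)$, and match the monotonicity of $c$ on each clique with the order of labels in $\wrow$. Your version is slightly more complete than the paper's: you also prove the converse direction of the restriction, including the tie-breaking argument that forces strict decrease on the $\nu$-cliques, which the paper omits.
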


\begin{example}\label{exa:gamma0}
    Consider the configuration $\tilde{c} = {\color{mycolor1} 6}\ {\color{mycolor1} 17}\ {\color{mycolor1} 18}\ {\color{mycolor1} 24}\ {\color{mycolor2} 2}\ {\color{mycolor3} 18}\ {\color{mycolor3} 14}\ {\color{mycolor4} 23}\ {\color{mycolor4} 22}\ {\color{mycolor4} 18}\ {\color{mycolor4} 14}\ {\color{mycolor4} 8}\ {\color{mycolor4} 2} \in \SortRec_2(({\color{mycolor1}4},{\color{mycolor2}1})\sqcup({\color{mycolor4}6}, {\color{mycolor3}2}); \varnothing)$ from Example~\ref{exa:tildemap}. We can compute the image $\pi := \Gamma_0(\tilde{c})$ to be:
    \[\begin{array}{lll}
        \pi({\color{mycolor4} 1})  = 24 - \tilde{c}({\color{mycolor4}  1}) + 1 = 23\quad &
        \pi({\color{mycolor4} 2})  = 24 - \tilde{c}({\color{mycolor4}  2}) + 1 = 17\quad &
        \pi({\color{mycolor4} 3})  = 24 - \tilde{c}({\color{mycolor4}  3}) + 1 = 11 \\
        \pi({\color{mycolor4} 4})  = 24 - \tilde{c}({\color{mycolor4}  4}) + 1 =  7\quad &
        \pi({\color{mycolor4} 5})  = 24 - \tilde{c}({\color{mycolor4}  5}) + 1 =  3\quad &
        \pi({\color{mycolor4} 6})  = 24 - \tilde{c}({\color{mycolor4}  6}) + 1 =  2 \\
        \pi({\color{mycolor3} 7})  = 24 - \tilde{c}({\color{mycolor3}  7}) + 1 = 11\quad &
        \pi({\color{mycolor3} 8})  = 24 - \tilde{c}({\color{mycolor3}  8}) + 1 =  7\quad &
        \pi({\color{mycolor2} 9})  = 24 - \tilde{c}({\color{mycolor2}  9}) + 1 = 23 \\
        \pi({\color{mycolor1} 10}) = 24 - \tilde{c}({\color{mycolor1} 10}) + 1 =  1\quad &
        \pi({\color{mycolor1} 11}) = 24 - \tilde{c}({\color{mycolor1} 11}) + 1 =  7\quad &
        \pi({\color{mycolor1} 12}) = 24 - \tilde{c}({\color{mycolor1} 12}) + 1 =  8 \\ &
        \pi({\color{mycolor1} 13}) = 24 - \tilde{c}({\color{mycolor1} 13}) + 1 = 19.\quad &
    \end{array}\]
    If we represent $\pi$ as a path in the $12 \times 24$ grid, we actually obtain the labelled Dyck path in Figure~\ref{fig:gamma0}.
    \begin{figure}[bhp]
        \begin{tikzpicture}
            \parkfunc{13}{2}{26,25,24,20,20,20,19,16,16,10,8,4,4}{{\color{mycolor1}10}, {\color{mycolor4}6}, {\color{mycolor4}5}, {\color{mycolor4}4}, {\color{mycolor3}8}, {\color{mycolor1}11}, {\color{mycolor1}12}, {\color{mycolor4}3}, {\color{mycolor3}7}, {\color{mycolor4}2}, {\color{mycolor1}13}, {\color{mycolor4}1}, {\color{mycolor2}9}}{.6}
        \end{tikzpicture}
        \caption{The diagram associated to $\pi := \Gamma_0(\tilde{c})$.}\label{fig:gamma0}
    \end{figure}\\
    Also, observe that the labelled Dyck path belongs to the subset determined by the same compositions of the configuration $\tilde{c}$: $\pi \in \PF_{13,26}(({\color{mycolor1}4},{\color{mycolor2}1}),({\color{mycolor4}6}, {\color{mycolor3}2}))$.
\end{example}

\begin{proof}[Proof Proposition~\ref{prop:gamma0map}]
    Fix $c \in \SortRec_k((1^n);\varnothing)$.

    To show that $\Gamma_0$ is well defined, we prove that $\Gamma_0(c)$ is a labelled Dyck path.\\
    Since $c$ is non-negative, $\big(\Gamma_0(c)\big)(v) = (n-1)k - c(v) \leq (n-1)k$ for all $v \in [n]$ and using that $c$ is stable we have $c(v) < (n-1)k + 1$, thus $\big(\Gamma_0(c)\big)(v) \geq 1$. The two inequalities combined imply that $\Gamma_0(c)$ has image in $[nk] = \{1,2,\dots,nk\}$.\\
    Moreover, $\Gamma_0(c)$ lies ``above the diagonal''. This property follows from Lemma~\ref{lem:rec_kcomplete}: suppose $i_1,i_2,\dots,i_n$ is a toppling order for $c$ recurrent, then for $j \in [n]$
    \begin{align*}
        c(i_j) \geq (n-j)k + 1 \qquad \Longrightarrow \qquad \big(\Gamma_0(c)\big)(i_j) = (n-1)k - c(i_j) + 1 \leq (j-1)k + 1
    \end{align*}
    and thus $\#\big(\Gamma_0(c)\big)^{-1}\big([(j-1)k+1]\big) \geq \#\{i_1,\dots,i_j\} = j$.
    \smallskip

    To show bijectivity, we can explicitly construct the inverse map.\\
    Fix a parking function $\pi \in \PF_{n,kn}((1^n);\varnothing)$, we associate it to the configuration $c \in \Conf(G_{(1^n),\varnothing}^{(k)})$ defined by $c(i) := (n-1)k - \pi(i) + 1$. One can easily prove that $c$ is a recurrent configuration by applying the same inequalities above and Lemma~\ref{lem:rec_kcomplete}.
    By construction, this map is the inverse of $\Gamma_0$. This proves bijectivity.\medskip

    To show that $\Gamma_0$ restricts to a map $\SortRec_k(\mu\sqcup\nu;\varnothing) \to \PF_{n,kn}(\mu;\nu)$, we prove that for any sorted recurrent configuration $c \in \SortRec_k(\mu,\nu)$:
    \begin{equation}\label{eq:shuffle}
        \wrow(\Gamma_0(c)) \in K_{\mu,1} \shuffle \dots \shuffle K_{\mu,\ell(\mu)} \shuffle I_{\nu,1} \shuffle \dots \shuffle I_{\nu,\ell(\nu)}.
    \end{equation}
    Consider $i_1,i_2 \in K_{\mu,j}$ with $i_1 < i_2$ two vertices in a $k$-clique associated to part $\mu_j$. By definition we have $c(i_1) \geq c(i_2)$, thus
    \[
        \big(\Gamma_0(c)\big)(i_1) = (n-1)k + 1 - c(i_1) \leq (n-1)k + 1 - c(i_2) = \big(\Gamma_0(c)\big)(i_2)
    \]
    which means $i_1$ precedes $i_2$ in $\wrow(\Gamma_0(c))$: either the inequality is strict ($i_1$ appears to the east of $i_2$ in the labelled Dyck path) or the labels are increasingly ordered when equality holds ($i_1$ and $i_2$ are in the same column).\\
    Given $i_1,i_2 \in I_{\nu,j}$ with $i_1 < i_2$ by hypothesis $c(i_1) < c(i_2)$. Therefore
    \[
        \big(\Gamma_0(c)\big)(i_1) = (n-1)k + 1 - c(i_1) > (n-1)k + 1 - c(i_2) = \big(\Gamma_0(c)\big)(i_2)
    \]
    and $i_2$ precedes $i_1$ in $\wrow(\Gamma_0(c))$.\\
    Varying $i_1,i_2$ between all consecutive vertices in the same component, we have shown~\eqref{eq:shuffle}.

    For the inverse map the corresponding result holds by an analogous argument that we omit here.
\end{proof}

This bijection preserves the values of the corresponding statistics.
\begin{proposition}\label{prop:gamma0prop}
    Given a configuration $c \in \SortRec_k(\mu\sqcup\nu; \varnothing)$ we have that:
    \begin{equation}\label{eq:prop1gamma0}
        \lev(c) = \area(\Gamma_0(c)),
    \end{equation}
    and $\wtopp(c) = \wpmaj(\Gamma_0(c))$, so in particular
    \begin{equation}\label{eq:prop2gamma0}
        \del(c) = \pmaj(\Gamma_0(c)).
    \end{equation}
\end{proposition}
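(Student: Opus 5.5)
The two claims are proved separately. Both rest on the explicit formula $f_{\Gamma_0(c)}(i) = (n-1)k - c(i) + 1$ from Proposition~\ref{prop:gamma0map}. Here $G := G_{\mu\sqcup\nu,\varnothing}^{(k)}$; as an unlabelled multigraph it is the $k$-complete graph on $[n]$ with a sink.

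\textbf{Area identity~\eqref{eq:prop1gamma0}.} This is a direct computation. By~\eqref{eq:cl_intepr_phif}, the area of a path in $\PF_{n,kn}$ equals $k\binom{n}{2}$ minus the total number of squares lying to the left of the north steps. Hence
\[
\area(\Gamma_0(c)) = k\binom{n}{2} - \sum_{i=1}^n \bigl(f_{\Gamma_0(c)}(i)-1\bigr) = k\binom{n}{2} - \sum_{i=1}^n \bigl((n-1)k - c(i)\bigr) = \sum_{i=1}^n c(i) - k\binom{n}{2}.
\]
Every pair of non-sink vertices of $G$ is joined by an edge of multiplicity $k$, so $|E_0(G)| = k\binom n2$. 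The right-hand side is therefore exactly $\lev(c)$.

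\textbf{Word identity $\wtopp(c)=\wpmaj(\Gamma_0(c))$.} The plan is to show that Algorithm~\ref{alg:toppl} run on $c$ and the procedure of Definition~\ref{def:gen_pmaj} run on $D:=\Gamma_0(c)$ perform the same steps, in lockstep. I would set up the following dictionary.

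First, I would describe the state of a vertex $v$ during the algorithm. Suppose $v$ has not yet released, and so far the sink has toppled and $M$ releases have occurred. Then $v$ holds $c(v)+1+M$ grains, since every release has threshold $\le k$ and so sends one grain along each $k$-edge. The vertex is unstable exactly when $c(v)+1+M\geq (n-1)k+1$. Using $f_D(v) = (n-1)k-c(v)+1$, this condition becomes $f_D(v)\leq M+1$. So $v$ becomes available precisely at step $m=M+1$, which is when Definition~\ref{def:gen_pmaj} inserts its $k$ copies into $B_m$.

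Second, I would show by induction on $m$ that the multiset $B_m$ equals the multiset of pending releases: each vertex $v$ that has become unstable contributes $k-r_v$ copies of itself. The remaining task is to check that the next released vertex is $\sigma_m$. The algorithm sweeps the vertices in decreasing order and resumes just after the vertex released last, namely $\sigma_{m-1}$. So the next vertex it releases is the largest element of $B_m$ smaller than $\sigma_{m-1}$, if one exists. If none exists, the sweep wraps around (a new while-cycle begins) and it releases $\max B_m$. This is exactly the rule in step (ii) of Definition~\ref{def:gen_pmaj}.

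\textbf{Main obstacle.} The delicate point is the treatment of vertices that become unstable \emph{during} a sweep, in two cases. If such a vertex is larger than the current position, it is only reached after a wrap-around. If it is smaller, it is released in the same sweep. This agrees with the pmaj rule, because $B_m$ is recomputed after every single release, but it must be checked against the counter $M$. There is also a boundary case: the initial state $\sigma_0=n+1$ corresponds to the first sweep starting at vertex $n$. Finally, a partially released vertex with $0<r_v<k$ is released on every visit, which matches its copies still sitting in $B_m$. A careful induction on the number of visits, in the style of the proof of Proposition~\ref{prop:alg_toppl}, should handle all of these.

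Once $\wtopp(c)=\wpmaj(D)$ is established, \eqref{eq:prop2gamma0} follows immediately from two facts. First, $\del(c)$ is the sum over $i$ of the number of weak ascents before the first occurrence of $i$ in $\wtopp(c)$, by Proposition~\ref{prop:alg_toppl}\ref{it:algo3}. Second, $\pmaj(D)$ is the same sum computed on $\wpmaj(D)$.
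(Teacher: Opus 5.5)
Your proposal is correct and follows essentially the same route as the paper. The area identity is the same direct computation from $f_{\Gamma_0(c)}(i)=(n-1)k-c(i)+1$ and $|E_0|=k\binom{n}{2}$. The word identity uses the same induction: it identifies the multisets $B_m$ of Definition~\ref{def:gen_pmaj} with the multisets of pending releases, and the selection rule (ii) with the cyclic decreasing sweep of Algorithm~\ref{alg:toppl}.

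The ``obstacles'' you flag need no separate induction. The configuration changes only at releases, so between two consecutive releases the sweep sees a fixed pending set equal to $B_m$. The paper relies on exactly this observation.
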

\begin{proof}
    Fix a sorted recurrent configuration $c \in \SortRec_k(\mu\sqcup\nu; \varnothing)$ and let $\pi := \Gamma_0(c)$.
    \smallskip

    To show Equation~\eqref{eq:prop1gamma0}, recall that the area statistic of a labelled Dyck path is given by the number of squares between the path and the main diagonal. Suppose $\warea(\pi) = w_1w_2\dots w_n$, then for $i \in [n]$:
    \begin{align*}
        w_i &= \#\text{ of squares in $i^{\text{th}}$ row to the east of the NORTH step} - \\
        &\qquad\qquad\qquad - \#\text{ of squares to the east of the main diagonal in row $i$}\\
        &= (nk - \pi(i) + 1) - (n-i+1)k = (i-1)k + 1 - \pi(i).
    \end{align*}
    Therefore we can compute:
    \begin{align*}
        \area(\pi) &= w_1 + \dots + w_n = {n \choose 2}k + n - \sum_{i \in [n]}\pi(i)\\
        &= {n \choose 2}k + n - \sum_{i \in [n]}\big((n-1)k - c(i) + 1\big)\\
        &= {n \choose 2}k + n - n(n-1)k + \sum_{i \in [n]}c(i) - n\\
        &= - \left(n(n-1) - {n \choose 2}\right)k + \sum_{i \in [n]}c(i)\\
        &= - {n \choose 2}k + \sum_{i \in [n]}c(i) = \lev(c)
    \end{align*}
    where recall that in the graph $G_{\mu \sqcup \nu, \varnothing}^{(k)}$ the number of edges non incident to the sink is ${n \choose 2}k$.
    \smallskip

    For Equation~\eqref{eq:prop2gamma0}, we compare the algorithms that compute $\pmaj$ and $\del$ (cf.~respectively Definition~\ref{def:gen_pmaj} and Algorithm~\ref{alg:toppl}).\\
    The $\pmaj$ is computed by constructing the multisets $\{B_i\}_{i \in [nk]}$ recursively, adding at each step $k$ copies of the labels of north steps at $x$-coordinate $i$ and removing one copy of the maximum label lower than the previous one in $\wpmaj(\pi)$.\\
    Analogously, we can define multisets $\{C_i\}_{i \in [nk]}$ for the $\del$ algorithm as follows:
    \[
        C_0 := \big\{\!\!\big\{ \underbrace{v, v,\dots,v}_{\text{$k$ times}} \ |\ v\text{ is unstable in $\phi_0(c)$} \big\}\!\!\big\}
    \] 
    and define $C_i$ by removing one copy of the vertex slow-released at step $i-1$ and adding $k$ copies of the new unstable vertices. In other words, $C_i$ records what vertices are unstable at step $i-1$ and how many slow-releases still have to be performed on the partially toppled vertices.\\
    We show that the toppling word $\wtopp(c)$ and the parking word $\wpmaj(\pi)$ coincide by proving that for every $i \in [nk]$ we have $B_i = C_i$.\\
    We do an induction of the index $i \in [nk]$:
    \begin{itemize}
        \item by construction $B_0 = C_0$. Indeed, after toppling the sink, the unstable vertices in $c$ are $v \in [n]$ such that $c(v) = (n-1)k$. Applying map $\Gamma_0$, these are the labels $v \in [n]$ such that $\pi(v) = 1$, exactly the ones that get read at the first iteration of the $\pmaj$ algorithm.
        \item suppose that $B_{i-1} = C_{i-1}$. A vertex $v \in [n]$ gets added to the multiset $C_i$ if and only if $v$ becomes unstable after the $(i-1)^{\text{th}}$ slow release. In other words:
        \[
            c(v) + 1 + (i-1) = \deg_G(v) = (n-1)k + 1.
        \]
        Applying $\Gamma_0$, this means that $\pi(v) = (n-1)k - c(v) + 1 = i$, so the vertex $v$ gets added to $B_i$.\\
        Moreover, the same vertex $v$ is removed from both $B_{i-1}$ and $C_{i-1}$. Let $w$ be the vertex removed in the previous step from $B_{i-2} = C_{i-2}$ (or let $w = 1$ if $i = 1$).\\
        In Algorithm~\ref{alg:toppl} the removed vertex $v$ is the first element of $C_{i-1}$ found by reading in decreasing cyclical order the vertices $[n]$ starting from $w$. An equivalent method to find $v$ is to pick the highest value in $C_{i-1} = B_{i-1}$ lower than $w$ or, if there is none such value, the maximal value of $C_{i-1} = B_{i-1}$. This is the same criteria used to pick the element that gets removed from $B_{i-1}$ in the definition of the $\pmaj$ statistic.\\
        We therefore conclude that $B_i = C_i$, since they are constructed by adding and removing from $B_{i-1} = C_{i-1}$ the same elements.
    \end{itemize}
    This concludes the proof.
\end{proof}

Finally, we can construct the desired statistic-preserving bijection.
\begin{definition}\label{def:main_bij}
    For any fixed $\mu,\nu$ compositions such that $|\mu| + |\nu| = n$ let:
    \[
        \begin{array}{cccc}
            \Gamma: & \SortRec_k(\mu;\nu) & \longrightarrow & \PF_{n,kn}(\mu;\nu)\\
                & c & \longmapsto & \Gamma_0(\tilde{c})
        \end{array}.
    \]
\end{definition}

The main properties of this map have already been discussed. We collect them in the following theorem.
\begin{theorem}\label{thm:bijPFSort}
    Consider $n,k \in \mathbb{N} \setminus \{0\}$ and two compositions $\mu,\nu$ such that $|\mu| + |\nu| = n$. Then the map:
    \[
        \Gamma: \SortRec_k(\mu;\nu) \longrightarrow \PF_{n,kn}(\mu;\nu)
    \]
    is a bijection and it preserves the corresponding statistics:
    \[
        \lev(c) = \area(\Gamma(c)) \qquad \text{and} \qquad \del(c) = \pmaj(\Gamma(c))
    \]
    for every $c \in \SortRec_k(\mu;\nu)$.
\end{theorem}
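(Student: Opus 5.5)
The plan is to obtain Theorem~\ref{thm:bijPFSort} by composing the two bijections built in Sections~\ref{sec:pfandsand} and~\ref{sec:bijSortedDyck}, checking separately that each step preserves the statistics. By definition $\Gamma = \Gamma_0 \circ \tilde{\cdot}$, restricted to $\SortRec_k(\mu;\nu)$.

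\textbf{Bijectivity.} First, Proposition~\ref{prop:tildemap}, Point~\ref{it:tilde2}, gives that the embedding $c\mapsto\tilde{c}$ is a bijection $\SortRec_k(\mu;\nu)\to\SortRec_k(\mu\sqcup\nu;\varnothing)$. Second, Proposition~\ref{prop:gamma0map} gives that $\Gamma_0$ restricts to a bijection $\SortRec_k(\mu\sqcup\nu;\varnothing)\to\PF_{n,kn}(\mu;\nu)$. Hence the composite $\Gamma$ is a bijection $\SortRec_k(\mu;\nu)\to\PF_{n,kn}(\mu;\nu)$.

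One bookkeeping point must be checked here. The graph $G^{(k)}_{\mu\sqcup\nu,\varnothing}$ has vertex set $[n]$ with the same numbering as $G^{(k)}_{\mu,\nu}$. Its $k$-cliques coming from $\nu$ carry the strictly increasing condition of Definition~\ref{def:sortsqcup}. These cliques are exactly the index sets $I_{\nu_s}$, and the cliques from $\mu$ are the sets $K_{\mu_s}$. The shuffle condition in Proposition~\ref{prop:gamma0map} is therefore the one defining $\PF_{n,kn}(\mu;\nu)$. This is a matter of matching the vertex numbering of Definition~\ref{def:graph_fam} with the sets $K_{\mu_i}$ and $I_{\nu_j}$.

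\textbf{Level.} For $c\in\SortRec_k(\mu;\nu)$, Lemma~\ref{lem:leveltilde} gives $\lev(c)=\lev(\tilde c)$. Equation~\eqref{eq:prop1gamma0} of Proposition~\ref{prop:gamma0prop}, applied to $\tilde c\in\SortRec_k(\mu\sqcup\nu;\varnothing)$, gives $\lev(\tilde c)=\area(\Gamma_0(\tilde c))=\area(\Gamma(c))$.

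\textbf{Delay.} Corollary~\ref{cor:delaytilde} gives $\del(c)=\del(\tilde c)$, and Equation~\eqref{eq:prop2gamma0} gives $\del(\tilde c)=\pmaj(\Gamma_0(\tilde c))$. Chaining these yields $\del(c)=\pmaj(\Gamma(c))$.

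Given the earlier results, the argument is essentially formal. The main obstacle is the compatibility of indexing in the restriction step. Specifically, the ordering of the components $H_1,\dots,H_{\ell(\nu)},K_{\ell(\mu)},\dots,K_1$ in Definition~\ref{def:graph_fam}, and the identification $G^{(k)}_{\mu\sqcup\nu,\varnothing}=G^{(k)}_{\mu\nu^{\rev},\varnothing}$, must produce the same label blocks as the sets $K_{\mu_i}$ (largest labels) and $I_{\nu_j}$ (smallest labels) used in $W(\mu;\nu)$. I would check this directly on the vertex numbers. I would also verify that the strict-versus-weak conventions are right: weakly decreasing values on $\mu$-cliques make the labels increasing within a column, while strictly increasing values on $\nu$-cliques force distinct columns in decreasing row order. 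These are exactly the conditions for $\uparrow\! K_{\mu_i}$ and $\downarrow\! I_{\nu_j}$ to appear as subwords of $\wrow$.
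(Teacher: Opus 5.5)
Your proposal is correct and matches the paper's proof: $\Gamma=\Gamma_0\circ\tilde{\cdot}$ is the composite of the bijection from Proposition~\ref{prop:tildemap}, which preserves $\lev$ and $\del$ by Lemma~\ref{lem:leveltilde} and Corollary~\ref{cor:delaytilde}, with the restriction of $\Gamma_0$ given by Propositions~\ref{prop:gamma0map} and~\ref{prop:gamma0prop}. Your extra indexing checks are correct and are what the proof of Proposition~\ref{prop:gamma0map} carries out: the vertex numbering against the blocks $K_{\mu_i}$ and $I_{\nu_j}$, and weak versus strict monotonicity against the row-reading order. They also cover the converse direction, which the paper omits.
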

\begin{proof}
    The result follows immediately from the fact that the map $\Gamma$ is the composition of statistics-preserving bijections.\\
    These properties are shown in Proposition~\ref{prop:tildemap}, Corollary~\ref{cor:delaytilde} and Lemma~\ref{lem:leveltilde} for the embedding map and in
    Propositions~\ref{prop:gamma0map} and~\ref{prop:gamma0prop} for map $\Gamma_0$.
\end{proof}

\begin{example}\label{exa:gamma}
    Consider the sorted recurrent configuration $c \in \SortRec_2(({\color{mycolor1}4},{\color{mycolor2}1});({\color{mycolor4}6}, {\color{mycolor3}2}))$ from the running Example~\ref{exa:toppling}.\\
    In Example~\ref{exa:tildemap} we have showed that, through the embedding map, $c$ gets mapped to the configuration:
    \[
        \tilde{c} = {\color{mycolor1} 6}\ {\color{mycolor1} 17}\ {\color{mycolor1} 18}\ {\color{mycolor1} 24}\ {\color{mycolor2} 2}\ {\color{mycolor3} 18}\ {\color{mycolor3} 14}\ {\color{mycolor4} 23}\ {\color{mycolor4} 22}\ {\color{mycolor4} 18}\ {\color{mycolor4} 14}\ {\color{mycolor4} 8}\ {\color{mycolor4} 2} \in \Sort(({\color{mycolor1}4},{\color{mycolor2}1})\sqcup({\color{mycolor4}6}, {\color{mycolor3}2}),\varnothing).
    \]
    Let $\pi := \Gamma(c) = \Gamma_0(\tilde{c})$ be the labelled Dyck path obtained in Example~\ref{exa:gamma0}, Figure~\ref{fig:gamma0}: it belongs to $\PF_{13,26}(({\color{mycolor1}4},{\color{mycolor2}1});({\color{mycolor4}6}, {\color{mycolor3}2}))$.\\
    One can also compute the respective statistics, obtaining:
    \[
        \area(\pi) = \lev(c) = 30 \qquad \text{and} \qquad \dinv(\pi) = \del(c) = 28.
    \]
\end{example}

This result, applied to Corollary~\ref{cor:nablaken_eh_pmaj}, gives a new interpretation of the scalar product $\langle \nabla^k e_n, e_\mu h_\nu \rangle$ in terms of sorted recurrent configurations. It is a direct generalization of the main result obtained in~\cite{DDILLV}.

\begin{corollary}\label{cor:identitydelay}
    Consider $n,k \in \mathbb{N} \setminus \{0\}$ and two compositions $\mu,\nu$ such that $|\mu| + |\nu| = n$. Then:
    \[
        \langle \nabla^k e_n, e_\mu h_\nu \rangle = \sum_{c \in \SortRec_k(\mu;\nu)}q^{\lev(c)}t^{\del(c)}.
    \]
\end{corollary}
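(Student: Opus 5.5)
The plan is to combine Corollary~\ref{cor:nablaken_eh_pmaj} with Theorem~\ref{thm:bijPFSort}. Corollary~\ref{cor:nablaken_eh_pmaj} already gives
\[
\langle \nabla^k e_n, e_\mu h_\nu \rangle = \sum_{D \in \PF_{n,kn}(\mu;\nu)}q^{\area(D)}t^{\pmaj(D)}.
\]
It then suffices to reindex this sum through the bijection $\Gamma: \SortRec_k(\mu;\nu) \to \PF_{n,kn}(\mu;\nu)$ of Theorem~\ref{thm:bijPFSort}. Substituting $D=\Gamma(c)$ and using $\area(\Gamma(c))=\lev(c)$ and $\pmaj(\Gamma(c))=\del(c)$ turns each monomial $q^{\area(D)}t^{\pmaj(D)}$ into $q^{\lev(c)}t^{\del(c)}$. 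Bijectivity of $\Gamma$ guarantees that every term is counted exactly once, which yields the claimed identity.

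Corollary~\ref{cor:nablaken_eh_pmaj} itself rests on Corollary~\ref{cor:identitypmaj} plus superization, so I would briefly justify that step. Following \cite[Chapter~6]{Haglund-Book-2008}, pairing the quasisymmetric expansion $\sum_D q^{\area}t^{\pmaj}L_{n,\Des(\wrow(D)^{-1})}$ with $e_\mu h_\nu$ counts exactly those $D$ whose row word is a shuffle of increasing blocks for the parts of $\mu$ and decreasing blocks for the parts of $\nu$. With the conventions defining $W(\mu;\nu)$, these are precisely the elements of $\PF_{n,kn}(\mu;\nu)$. This bookkeeping of which blocks are increasing versus decreasing is the only point needing care, and it must match the conventions used in \cite{DDILLV} and in Proposition~\ref{prop:gamma0map}.

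The main obstacle lies upstream, in the pieces assumed from earlier results. One is that $\Gamma_0$ restricted to $\SortRec_k(\mu\sqcup\nu;\varnothing)$ lands onto, not merely into, $\PF_{n,kn}(\mu;\nu)$. The other is the compatibility of the embedding $c\mapsto\tilde c$ with recurrence (Proposition~\ref{prop:tildemap}). Taking those as given, the corollary is a direct composition of the two results.
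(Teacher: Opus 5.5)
Your proposal is correct and follows the paper's argument exactly: reindex the sum in Corollary~\ref{cor:nablaken_eh_pmaj} through the bijection $\Gamma$ of Theorem~\ref{thm:bijPFSort}, which carries $(\lev,\del)$ to $(\area,\pmaj)$. One caution about your optional superization aside, which does not affect your proof since you only use Corollary~\ref{cor:nablaken_eh_pmaj} as stated: with $L_{n,S}$ as defined, pairing the displayed expansion of Corollary~\ref{cor:identitypmaj} with $h_\nu$ selects those $D$ whose row word has each $\nu$-block of consecutive letters in \emph{increasing} order (and $e_\mu$ selects decreasing $\mu$-blocks), so the directions you assert are reversed; they agree with $W(\mu;\nu)$ only because, under the paper's reading conventions, that displayed expansion equals $\omega\nabla^k e_n$ (already at $n=2$, $k=1$ it gives $(q+t)s_{2}+s_{11}$).
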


\bibliographystyle{amsalpha}
\bibliography{refs}

\newpage

\appendix
\section{Possible diagonal inversion contributes between pairs of labels}\label{app:dinvtable}

Consider $\lambda,\mu \in [n]$ distinct labels of a labelled Dyck path $D \in \LDyck_{n,kn}$. Fix the area contribute difference $m:=a_{\lambda}(D) - a_{\mu}(D) > 0$.\smallskip\\
We collect the possible diagonal inversion contributes of respectively $\dinvcorr$ and $\tdinv$ in the following tables. They are the key to comprehend what happens to the values of $\phi_n^{(k)}{f_D}$ when we modify the values of $f_D$, since the bijection itself relies on the $\dinv$ contribute of each label in the labelled Dyck path.

\medskip

In the first table, the contributes of $\dinvcorr$ of two north steps labelled $\lambda$ and $\mu$ are compared when the only thing changing is the relative east/west position of the steps.

\begin{table}[htbp!]
	\centering
	\begin{tabular}[t]{lcc}
		\hline
		\multicolumn{3}{c}{\textbf{\mathversion{bold}Dinv Correction ($\dinvcorr$)}}\\
		\hline
		\textbf{Area difference} & \textbf{\mathversion{bold}Case $\lambda$ left of $\mu$} & \textbf{\mathversion{bold}Case $\mu$ left of $\lambda$}\\
		$m:=a_\lambda(D)-a_{\mu}(D)$& $f_D(\lambda) < f_D(\mu)$ & $f_D(\lambda) > f_D(\mu)$ \\
		\hline
		\raisebox{2cm}{
			\begin{tabular}{l}(A) $m = 0$\\ \\ Example: $\begin{array}{l}k = 5\\ m = 0\end{array}$\end{tabular}} &
		\begin{tikzpicture}[scale = .7]               
			\draw[green!60!black, dotted] (0,0) -- (6,3);
			\draw[green!60!black, dashed, fill=green!60!black, fill opacity=.2] (0,0.2) -- (6,3.2) -- (6,3.4) -- (0,0.4) -- cycle;
			\draw[green!60!black, dashed, fill=green!60!black, fill opacity=.2] (0,0.4) -- (6,3.4) -- (6,3.6) -- (0,0.6) -- cycle;
			\draw[green!60!black, dashed, fill=green!60!black, fill opacity=.2] (0,0.6) -- (6,3.6) -- (6,3.8) -- (0,0.8) -- cycle;
			\draw[green!60!black, dashed, fill=green!60!black, fill opacity=.2] (0,0.8) -- (6,3.8) -- (6,4) -- (0,1) -- cycle;
			\draw[green!60!black, dashed, opacity=0, fill=orange!60!black, fill opacity=.15] (0,0.2) -- (6,3.2) -- (6,3) -- (0,0) -- cycle;
			\draw[green!60!black] (0,1) -- (6,4);
			\draw[red, very thick] (6,3) -- (6,4) node[at end, above left] {$\mu$};
			\draw[red, very thick] (2,1) -- (2,2) node[at end, above left] {$\lambda$};     
			\node[scale=.8] at (5,1) {$k-1\ \dinvcorr$};
		\end{tikzpicture}
		&
		\begin{tikzpicture}[scale = .7]
			\draw[green!60!black, dotted] (0,0) -- (6,3);
			\draw[green!60!black, dashed, fill=green!60!black, fill opacity=.2] (0,0.2) -- (6,3.2) -- (6,3.4) -- (0,0.4) -- cycle;
			\draw[green!60!black, dashed, fill=green!60!black, fill opacity=.2] (0,0.4) -- (6,3.4) -- (6,3.6) -- (0,0.6) -- cycle;
			\draw[green!60!black, dashed, fill=green!60!black, fill opacity=.2] (0,0.6) -- (6,3.6) -- (6,3.8) -- (0,0.8) -- cycle;
			\draw[green!60!black, dashed, fill=green!60!black, fill opacity=.2] (0,0.8) -- (6,3.8) -- (6,4) -- (0,1) -- cycle;
			\draw[green!60!black, dashed, opacity=0, fill=orange!60!black, fill opacity=.15] (0,0.2) -- (6,3.2) -- (6,3) -- (0,0) -- cycle;
			\draw[green!60!black] (0,1) -- (6,4);
			\draw[red, very thick] (6,3) -- (6,4) node[at end, above left] {$\lambda$};
			\draw[red, very thick] (2,1) -- (2,2) node[at end, above left] {$\mu$};     
			\node[scale=.8] at (5,1) {$k-1\ \dinvcorr$};
		\end{tikzpicture}\\ \hline
		
		\raisebox{2cm}{
			\begin{tabular}{l}(B) $0 < m < k$\\ \\ Example: $\begin{array}{l}k = 5\\ m = 2\end{array}$\end{tabular}} &
		\begin{tikzpicture}[scale = .7]               
			\draw[green!60!black, dotted] (0,0) -- (6,3);
			\draw[green!60!black, dashed, fill=green!60!black, fill opacity=0] (0,0.2) -- (6,3.2) -- (6,3.4) -- (0,0.4) -- cycle;
			\draw[green!60!black, dashed, fill=green!60!black, fill opacity=.2] (0,0.4) -- (6,3.4) -- (6,3.6) -- (0,0.6) -- cycle;
			\draw[green!60!black, dashed, fill=green!60!black, fill opacity=.2] (0,0.6) -- (6,3.6) -- (6,3.8) -- (0,0.8) -- cycle;
			\draw[green!60!black, dashed, fill=green!60!black, fill opacity=.2] (0,0.8) -- (6,3.8) -- (6,4) -- (0,1) -- cycle;
			\draw[green!60!black] (0,1) -- (6,4);
			\draw[red, very thick] (6,3) -- (6,4) node[at end, above left] {$\mu$};
			\draw[red, very thick] (2,1.4) -- (2,2.4) node[at end, above left] {$\lambda$};     
			\node[scale=.8] at (5,1) {$k-m\ \dinvcorr$};
		\end{tikzpicture} & 
		\begin{tikzpicture}[scale = .7]
			\draw[green!60!black, dotted] (0,0) -- (6,3);
			\draw[green!60!black, dashed, fill=green!60!black, fill opacity=.2] (0,0.2) -- (6,3.2) -- (6,3.4) -- (0,0.4) -- cycle;
			\draw[green!60!black, dashed, fill=green!60!black, fill opacity=.2] (0,0.4) -- (6,3.4) -- (6,3.6) -- (0,0.6) -- cycle;
			\draw[green!60!black, dashed, fill=green!60!black, fill opacity=0] (0,0.6) -- (6,3.6) -- (6,3.8) -- (0,0.8) -- cycle;
			\draw[green!60!black, dashed, fill=green!60!black, fill opacity=0] (0,0.8) -- (6,3.8) -- (6,4) -- (0,1) -- cycle;
			\draw[green!60!black, dashed, opacity=0, fill=orange!60!black, fill opacity=.15] (0,0.2) -- (6,3.2) -- (6,3) -- (0,0) -- cycle;
			\draw[green!60!black] (0,1) -- (6,4);
			\draw[red, very thick] (6,3) -- (6,4) node[at end, above left] {$\lambda$};
			\draw[red, very thick] (2,.6) -- (2,1.6) node[at end, above left] {$\mu$};       
			\node[scale=.8] at (5,1) {$k-m-1\ \dinvcorr$};
		\end{tikzpicture}\\ \hline
		
		\raisebox{2cm}{
			\begin{tabular}{l}(C + D) $m \geq k$\\ \\ Example: $\begin{array}{l}k = 5\\ m = 5\end{array}$\end{tabular}} &
		\begin{tikzpicture}[scale = .7]               
			\draw[green!60!black, dotted] (0,0) -- (6,3);
			\draw[green!60!black, dashed, fill=green!60!black, fill opacity=0] (0,0.2) -- (6,3.2) -- (6,3.4) -- (0,0.4) -- cycle;
			\draw[green!60!black, dashed, fill=green!60!black, fill opacity=0] (0,0.4) -- (6,3.4) -- (6,3.6) -- (0,0.6) -- cycle;
			\draw[green!60!black, dashed, fill=green!60!black, fill opacity=0] (0,0.6) -- (6,3.6) -- (6,3.8) -- (0,0.8) -- cycle;
			\draw[green!60!black, dashed, fill=green!60!black, fill opacity=0] (0,0.8) -- (6,3.8) -- (6,4) -- (0,1) -- cycle;
			\draw[green!60!black] (0,1) -- (6,4);
			\draw[red, very thick] (6,3) -- (6,4) node[at end, above left] {$\mu$};
			\draw[red, very thick] (2,2) -- (2,3) node[at end, above left] {$\lambda$};       
			\node[scale=.8] at (5,1) {$0\ \dinvcorr$};
		\end{tikzpicture} &
		\begin{tikzpicture}[scale = .7]
			\draw[green!60!black, dotted] (0,0) -- (6,3);
			\draw[green!60!black, dashed, fill=green!60!black, fill opacity=0] (0,0.2) -- (6,3.2) -- (6,3.4) -- (0,0.4) -- cycle;
			\draw[green!60!black, dashed, fill=green!60!black, fill opacity=0] (0,0.4) -- (6,3.4) -- (6,3.6) -- (0,0.6) -- cycle;
			\draw[green!60!black, dashed, fill=green!60!black, fill opacity=0] (0,0.6) -- (6,3.6) -- (6,3.8) -- (0,0.8) -- cycle;
			\draw[green!60!black, dashed, fill=green!60!black, fill opacity=0] (0,0.8) -- (6,3.8) -- (6,4) -- (0,1) -- cycle;
			\draw[green!60!black] (0,1) -- (6,4);
			\draw[red, very thick] (6,3) -- (6,4) node[at end, above left] {$\lambda$};
			\draw[red, very thick] (2,0) -- (2,1) node[at end, above left] {$\mu$};        
			\node[scale=.8] at (5,1) {$0\ \dinvcorr$};
		\end{tikzpicture}\\
		\hline
	\end{tabular}
	\caption{Possible diagonal inversion correction contributes of two labels.}\label{tab:poss_dinvcorrs}
\end{table}

\newpage

In Table~\ref{tab:poss_tdinvs} the values of $\tdinv$ are compared with a similar criterion. However, in this case the statistic depends on the value of the labels (whether $\lambda < \mu$ or $\mu < \lambda$) so there is an additional subdivision in cases.

\begin{table}[htbp!]
	\centering
	\begin{tabular}{lcc}
		\hline
		\multicolumn{3}{c}{\textbf{\mathversion{bold}Temporary Dinv ($\tdinv$)}}\\
		\hline
		\textbf{Area difference} & \textbf{\mathversion{bold}Case $\lambda$ left of $\mu$} & \textbf{\mathversion{bold}Case $\mu$ left of $\lambda$}\\
		$m:=a_\lambda(D)-a_{\mu}(D)$& $f_D(\lambda) < f_D(\mu)$ & $f_D(\lambda) > f_D(\mu)$ \\
		\hline
		\raisebox{2cm}{
			(A) $m = 0$} & \begin{tikzpicture}[scale = .65]               
			\node[opacity=0] at (1,5.05) {\textbullet}; 
			\draw[green!60!black, dashed] (0,0) -- (6,3);
			\draw[green!60!black] (0,1) -- (6,4);
			\draw[orange!80!black, dashed] (6,4.05) -- (6,5);
			\draw[orange!80!black] (0,2) -- (6,5);
			\draw[red, very thick] (6,3) -- (6,4) node[midway, right] {$\mu$} node[at end] {\textbullet};
			\draw[red, very thick] (2,1) -- (2,2) node[midway, left] {$\lambda$} node[at end] {\textbullet};
			\node[scale=.8] at (5,0) {$\begin{array}{ccc}
					\text{(A1): }\ \lambda < \mu & \Longrightarrow & 1\ \tdinv\\
					\text{(A2): }\ \lambda > \mu & \Longrightarrow & 0\ \tdinv
				\end{array}$};
		\end{tikzpicture} & \begin{tikzpicture}[scale = .65]
			\node[opacity=0] at (1,5.05) {\textbullet}; 
			\draw[green!60!black, dashed] (0,0) -- (6,3);
			\draw[green!60!black] (0,1) -- (6,4);
			\draw[orange!80!black, dashed] (6,4.05) -- (6,5);
			\draw[orange!80!black] (0,2) -- (6,5);
			\draw[red, very thick] (6,3) -- (6,4) node[midway, right] {$\lambda$} node[at end] {\textbullet};
			\draw[red, very thick] (2,1) -- (2,2) node[midway, left] {$\mu$} node[at end] {\textbullet};
			\node[scale=.8] at (5,0) {$\begin{array}{ccc}
					\text{(A1): }\ \lambda < \mu & \Longrightarrow & 0\ \tdinv\\
					\text{(A2): }\ \lambda > \mu & \Longrightarrow & 1\ \tdinv
				\end{array}$};
		\end{tikzpicture}\\ \hline
		
		\raisebox{2cm}{
			(B) $0 < m < k$} & \begin{tikzpicture}[scale = .65]               
			\node[opacity=0] at (1,5.05) {\textbullet}; 
			\draw[green!60!black, dashed] (0,0) -- (6,3);
			\draw[green!60!black] (0,1) -- (6,4);
			\draw[orange!80!black, dashed] (6,4.05) -- (6,5);
			\draw[orange!80!black] (0,2) -- (6,5);
			\draw[red, very thick] (6,3) -- (6,4) node[midway, right] {$\mu$} node[at end] {\textbullet};
			\draw[red, very thick] (2,1.6) -- (2,2.6) node[midway, left] {$\lambda$} node[at end] {\textbullet};
			\node[scale=.8] at (5,0) {$\begin{array}{ccc}
					\text{(B1): }\ \lambda < \mu & \Longrightarrow & 0\ \tdinv\\
					\text{(B2): }\ \lambda > \mu & \Longrightarrow & 1\ \tdinv
				\end{array}$}; 
		\end{tikzpicture} & \begin{tikzpicture}[scale = .65]
			\node[opacity=0] at (1,5.05) {\textbullet}; 
			\draw[green!60!black, dashed] (0,0) -- (6,3);
			\draw[green!60!black] (0,1) -- (6,4);
			\draw[orange!80!black, dashed] (6,4.05) -- (6,5);
			\draw[orange!80!black] (0,2) -- (6,5);
			\draw[red, very thick] (6,3) -- (6,4) node[midway, right] {$\lambda$} node[at end] {\textbullet};
			\draw[red, very thick] (2,.6) -- (2,1.6) node[midway, left] {$\mu$} node[at end] {\textbullet};    
			\node[scale=.8] at (5,0) {$\begin{array}{ccc}
					\text{(B1): }\ \lambda < \mu & \Longrightarrow & 0\ \tdinv\\
					\text{(B2): }\ \lambda > \mu & \Longrightarrow & 1\ \tdinv
				\end{array}$};
		\end{tikzpicture}\\ \hline
		
		\raisebox{2cm}{
			(C) $m = k$} & \begin{tikzpicture}[scale = .65]               
			\node[opacity=0] at (1,5.05) {\textbullet}; 
			\draw[green!60!black, dashed] (0,0) -- (6,3);
			\draw[green!60!black] (0,1) -- (6,4);
			\draw[orange!80!black, dashed] (6,4.05) -- (6,5);
			\draw[orange!80!black] (0,2) -- (6,5);
			\draw[red, very thick] (6,3) -- (6,4) node[midway, right] {$\mu$} node[at end] {\textbullet};
			\draw[red, very thick] (2,2) -- (2,3) node[midway, left] {$\lambda$} node[at end] {\textbullet};       
			\node[scale=.8] at (5,0) {$\begin{array}{ccc}
					\text{(C1): }\ \lambda < \mu & \Longrightarrow & 0\ \tdinv\\
					\text{(C2): }\ \lambda > \mu & \Longrightarrow & 1\ \tdinv
				\end{array}$};
		\end{tikzpicture} & \begin{tikzpicture}[scale = .65]
			\node[opacity=0] at (1,5.05) {\textbullet}; 
			\draw[green!60!black, dashed] (0,0) -- (6,3);
			\draw[green!60!black] (0,1) -- (6,4);
			\draw[orange!80!black, dashed] (6,4.05) -- (6,5);
			\draw[orange!80!black] (0,2) -- (6,5);
			\draw[red, very thick] (6,3) -- (6,4) node[midway, right] {$\lambda$} node[at end] {\textbullet};
			\draw[red, very thick] (2,0) -- (2,1) node[midway, left] {$\mu$} node[at end] {\textbullet};                
			\node[scale=.8, opacity=0] at (5,0) {$\begin{array}{ccc}
					\text{(C1): }\ \lambda < \mu & \Longrightarrow & 0\ \tdinv\\
					\text{(C2): }\ \lambda > \mu & \Longrightarrow & 1\ \tdinv
				\end{array}$};
			\node[scale=.8] at (5,0) {Not Attacking};
		\end{tikzpicture}\\ \hline
		
		\raisebox{2cm}{
			(D) $m > k$} & \begin{tikzpicture}[scale = .65]               
			\node[opacity=0] at (1,5.05) {\textbullet}; 
			\draw[green!60!black, dashed] (0,0) -- (6,3);
			\draw[green!60!black] (0,1) -- (6,4);
			\draw[orange!80!black, dashed] (6,4.05) -- (6,5);
			\draw[orange!80!black] (0,2) -- (6,5);
			\draw[red, very thick] (6,3) -- (6,4) node[midway, right] {$\mu$} node[at end] {\textbullet};
			\draw[red, very thick] (2,2.6) -- (2,3.6) node[midway, left] {$\lambda$} node[at end] {\textbullet}; 
			\draw[white, very thick] (2,-.6) -- (2,.4);        
			\node[scale=.8] at (5,0) {Not Attacking};
		\end{tikzpicture} & \begin{tikzpicture}[scale = .65]
			\node[opacity=0] at (1,5.05) {\textbullet}; 
			\draw[green!60!black, dashed] (0,0) -- (6,3);
			\draw[green!60!black] (0,1) -- (6,4);
			\draw[orange!80!black, dashed] (6,4.05) -- (6,5);
			\draw[orange!80!black] (0,2) -- (6,5);
			\draw[red, very thick] (6,3) -- (6,4) node[midway, right] {$\lambda$} node[at end] {\textbullet};
			\draw[red, very thick] (2,-.6) -- (2,.4) node[midway, left] {$\mu$} node[at end] {\textbullet};            
			\node[scale=.8] at (5,0) {Not Attacking};
		\end{tikzpicture}\\
		\hline
	\end{tabular}
	\caption{Possible temporary diagonal inversion contributes of two labels.}\label{tab:poss_tdinvs}
\end{table}

\newpage

\section{Examples of computation for Algorithm~\ref{alg:toppl}}\label{app:comp_algo}

Referring to the Example~\ref{exa:tildemap} and the two configurations $c$ and $\tilde{c}$, we recollect here the tables with a record of all the grain releases on the configuration during the $\del$ algorithm.
\medskip

In particular, the unstable vertices after each grain release are shown in red and the vertices that have already been grain released once in bold.\\
In the first column we keep track of the vertex that was grain released to get the configuration in such row. Thus, the next grain release can be found by searching for the next either bold or red value of a vertex of the sequence $13,\ 12,\ \dots,\ 2,\ 1$ read ciclically starting from the last grain released vertex.

\begin{table}[htbp!]
    \def\arraystretch{1}
    \setlength{\tabcolsep}{5pt}
    \centering
    \begin{tabular}{c|ccccccccccccc}
        \toprule
        \multicolumn{14}{c}{\textbf{\mathversion{bold}Example of computation for configuration $c$ from Example~\ref{exa:tildemap}}}\\
        \midrule
        $\wtopp(c)$ & 13 & 12 & 11 & 10 & 9 & 8 & 7 & 6 & 5 & 4 & 3 & 2 & 1 \\
        \midrule
            & 6 & 17 & 18 & 24 & 2 & 17 & 14 & 18 & 18 & 15 & 12 & 7 & 2\\
		0 & 7 & 18 & 19 & \textbf{\color{red}25} & 3 & 18 & 15 & 19 & 19 & 16 & 13 & 8 & 3\\ 
		10 & 8 & 19 & 20 & \textbf{13} & 4 & 19 & 16 & \textbf{\color{red}20} & \textbf{\color{red}20} & 17 & 14 & 9 & 4\\ 
		6 & 9 & 20 & 21 & \textbf{14} & 5 & 20 & 17 & \textbf{13} & \textbf{\color{red}20} & 17 & 14 & 9 & 4\\ 
		5 & 10 & 21 & 22 & \textbf{15} & 6 & 21 & 18 & \textbf{13} & \textbf{13} & 17 & 14 & 9 & 4\\ 
		10 & 11 & 22 & 23 & 2 & 7 & 22 & 19 & \textbf{14} & \textbf{14} & 18 & 15 & 10 & 5\\ 
		6 & 12 & 23 & 24 & 3 & 8 & 23 & 20 & 1 & \textbf{15} & 19 & 16 & 11 & 6\\ 
		5 & 13 & 24 & \textbf{\color{red}25} & 4 & 9 & \textbf{\color{red}24} & 21 & 2 & 2 & \textbf{\color{red}20} & 17 & 12 & 7\\ 
		4 & 14 & \textbf{\color{red}25} & \textbf{\color{red}26} & 5 & 10 & \textbf{\color{red}25} & 22 & 2 & 2 & \textbf{13} & 17 & 12 & 7\\ 
		12 & 15 & \textbf{13} & \textbf{\color{red}27} & 6 & 11 & \textbf{\color{red}26} & 23 & 3 & 3 & \textbf{14} & 18 & 13 & 8\\ 
		11 & 16 & \textbf{14} & \textbf{15} & 7 & 12 & \textbf{\color{red}27} & \textbf{\color{red}24} & 4 & 4 & \textbf{15} & 19 & 14 & 9\\ 
		8 & 17 & \textbf{15} & \textbf{16} & 8 & 13 & \textbf{16} & \textbf{\color{red}24} & 5 & 5 & \textbf{16} & \textbf{\color{red}20} & 15 & 10\\ 
		7 & 18 & \textbf{16} & \textbf{17} & 9 & 14 & \textbf{16} & \textbf{13} & 6 & 6 & \textbf{17} & \textbf{\color{red}21} & 16 & 11\\ 
		4 & 19 & \textbf{17} & \textbf{18} & 10 & 15 & \textbf{17} & \textbf{14} & 7 & 7 & 4 & \textbf{\color{red}22} & 17 & 12\\ 
		3 & 20 & \textbf{18} & \textbf{19} & 11 & 16 & \textbf{18} & \textbf{15} & 7 & 7 & 4 & \textbf{15} & 17 & 12\\ 
		12 & 21 & 5 & \textbf{20} & 12 & 17 & \textbf{19} & \textbf{16} & 8 & 8 & 5 & \textbf{16} & 18 & 13\\ 
		11 & 22 & 6 & 7 & 13 & 18 & \textbf{20} & \textbf{17} & 9 & 9 & 6 & \textbf{17} & 19 & 14\\ 
		8 & 23 & 7 & 8 & 14 & 19 & 7 & \textbf{18} & 10 & 10 & 7 & \textbf{18} & \textbf{\color{red}20} & 15\\ 
		7 & 24 & 8 & 9 & 15 & 20 & 8 & 5 & 11 & 11 & 8 & \textbf{19} & \textbf{\color{red}21} & 16\\ 
		3 & \textbf{\color{red}25} & 9 & 10 & 16 & 21 & 9 & 6 & 12 & 12 & 9 & 6 & \textbf{\color{red}22} & 17\\ 
		2 & \textbf{\color{red}26} & 10 & 11 & 17 & 22 & 10 & 7 & 12 & 12 & 9 & 6 & \textbf{15} & 17\\ 
		13 & \textbf{14} & 11 & 12 & 18 & 23 & 11 & 8 & 13 & 13 & 10 & 7 & \textbf{16} & 18\\ 
		2 & \textbf{15} & 12 & 13 & 19 & 24 & 12 & 9 & 14 & 14 & 11 & 8 & 3 & 19\\ 
		13 & 2 & 13 & 14 & 20 & \textbf{\color{red}25} & 13 & 10 & 15 & 15 & 12 & 9 & 4 & \textbf{\color{red}20}\\ 
		9 & 3 & 14 & 15 & 21 & \textbf{13} & 14 & 11 & 16 & 16 & 13 & 10 & 5 & \textbf{\color{red}21}\\ 
		1 & 4 & 15 & 16 & 22 & \textbf{14} & 15 & 12 & 16 & 16 & 13 & 10 & 5 & \textbf{14}\\ 
		9 & 5 & 16 & 17 & 23 & 1 & 16 & 13 & 17 & 17 & 14 & 11 & 6 & \textbf{15}\\ 
		1 & 6 & 17 & 18 & 24 & 2 & 17 & 14 & 18 & 18 & 15 & 12 & 7 & 2
    \end{tabular}
    \caption{Table recording all the grain releases for configuration $c$ from Example~\ref{exa:tildemap}.}\label{tab:delayc}
\end{table}

\newpage

\begin{table}[tbp!]
    \def\arraystretch{1}
    \setlength{\tabcolsep}{5pt}
    \centering
    \begin{tabular}{c|ccccccccccccc}
        \toprule
        \multicolumn{14}{c}{\textbf{\mathversion{bold}Example of computation for configuration $\tilde{c}$ from Example~\ref{exa:tildemap}}}\\
        \midrule
        $\wtopp(\tilde{c})$ & 13 & 12 & 11 & 10 & 9 & 8 & 7 & 6 & 5 & 4 & 3 & 2 & 1 \\
        \midrule
          & 6 & 17 & 18 & 24 & 2 & 18 & 14 & 23 & 22 & 18 & 14 & 8 & 2\\
        0 & 7 & 18 & 19 & \textbf{\color{red}25} & 3 & 19 & 15 & 24 & 23 & 19 & 15 & 9 & 3\\ 
		10 & 8 & 19 & 20 & \textbf{13} & 4 & 20 & 16 & \textbf{\color{red}25} & 24 & 20 & 16 & 10 & 4\\ 
		6 & 9 & 20 & 21 & \textbf{14} & 5 & 21 & 17 & \textbf{13} & \textbf{\color{red}25} & 21 & 17 & 11 & 5\\ 
		5 & 10 & 21 & 22 & \textbf{15} & 6 & 22 & 18 & \textbf{14} & \textbf{13} & 22 & 18 & 12 & 6\\ 
		10 & 11 & 22 & 23 & 2 & 7 & 23 & 19 & \textbf{15} & \textbf{14} & 23 & 19 & 13 & 7\\ 
		6 & 12 & 23 & 24 & 3 & 8 & 24 & 20 & 2 & \textbf{15} & 24 & 20 & 14 & 8\\ 
		5 & 13 & 24 & \textbf{\color{red}25} & 4 & 9 & \textbf{\color{red}25} & 21 & 3 & 2 & \textbf{\color{red}25} & 21 & 15 & 9\\ 
		4 & 14 & \textbf{\color{red}25} & \textbf{\color{red}26} & 5 & 10 & \textbf{\color{red}26} & 22 & 4 & 3 & \textbf{13} & 22 & 16 & 10\\ 
		12 & 15 & \textbf{13} & \textbf{\color{red}27} & 6 & 11 & \textbf{\color{red}27} & 23 & 5 & 4 & \textbf{14} & 23 & 17 & 11\\ 
		11 & 16 & \textbf{14} & \textbf{15} & 7 & 12 & \textbf{\color{red}28} & 24 & 6 & 5 & \textbf{15} & 24 & 18 & 12\\ 
		8 & 17 & \textbf{15} & \textbf{16} & 8 & 13 & \textbf{16} & \textbf{\color{red}25} & 7 & 6 & \textbf{16} & \textbf{\color{red}25} & 19 & 13\\ 
		7 & 18 & \textbf{16} & \textbf{17} & 9 & 14 & \textbf{17} & \textbf{13} & 8 & 7 & \textbf{17} & \textbf{\color{red}26} & 20 & 14\\ 
		4 & 19 & \textbf{17} & \textbf{18} & 10 & 15 & \textbf{18} & \textbf{14} & 9 & 8 & 4 & \textbf{\color{red}27} & 21 & 15\\ 
		3 & 20 & \textbf{18} & \textbf{19} & 11 & 16 & \textbf{19} & \textbf{15} & 10 & 9 & 5 & \textbf{15} & 22 & 16\\ 
		12 & 21 & 5 & \textbf{20} & 12 & 17 & \textbf{20} & \textbf{16} & 11 & 10 & 6 & \textbf{16} & 23 & 17\\ 
		11 & 22 & 6 & 7 & 13 & 18 & \textbf{21} & \textbf{17} & 12 & 11 & 7 & \textbf{17} & 24 & 18\\ 
		8 & 23 & 7 & 8 & 14 & 19 & 8 & \textbf{18} & 13 & 12 & 8 & \textbf{18} & \textbf{\color{red}25} & 19\\ 
		7 & 24 & 8 & 9 & 15 & 20 & 9 & 5 & 14 & 13 & 9 & \textbf{19} & \textbf{\color{red}26} & 20\\ 
		3 & \textbf{\color{red}25} & 9 & 10 & 16 & 21 & 10 & 6 & 15 & 14 & 10 & 6 & \textbf{\color{red}27} & 21\\ 
		2 & \textbf{\color{red}26} & 10 & 11 & 17 & 22 & 11 & 7 & 16 & 15 & 11 & 7 & \textbf{15} & 22\\ 
		13 & \textbf{14} & 11 & 12 & 18 & 23 & 12 & 8 & 17 & 16 & 12 & 8 & \textbf{16} & 23\\ 
		2 & \textbf{15} & 12 & 13 & 19 & 24 & 13 & 9 & 18 & 17 & 13 & 9 & 3 & 24\\ 
		13 & 2 & 13 & 14 & 20 & \textbf{\color{red}25} & 14 & 10 & 19 & 18 & 14 & 10 & 4 & \textbf{\color{red}25}\\ 
		9 & 3 & 14 & 15 & 21 & \textbf{13} & 15 & 11 & 20 & 19 & 15 & 11 & 5 & \textbf{\color{red}26}\\ 
		1 & 4 & 15 & 16 & 22 & \textbf{14} & 16 & 12 & 21 & 20 & 16 & 12 & 6 & \textbf{14}\\ 
		9 & 5 & 16 & 17 & 23 & 1 & 17 & 13 & 22 & 21 & 17 & 13 & 7 & \textbf{15}\\ 
		1 & 6 & 17 & 18 & 24 & 2 & 18 & 14 & 23 & 22 & 18 & 14 & 8 & 2
    \end{tabular}
    \caption{Table recording all the grain releases for configuration $\tilde{c}$ from Example~\ref{exa:tildemap}.}\label{tab:delaytildec}
\end{table}

Observe that in both Tables~\ref{tab:delayc} and~\ref{tab:delaytildec} the first column represents the order of grain releases $\wtopp$ and that the two coincides, as proved in Proposition~\ref{prop:alg_toppl}.

\end{document}